\documentclass{amsart}
\usepackage{kotex}
\usepackage{CJKutf8}
\usepackage{amsmath}
\usepackage{amssymb}
\usepackage{enumerate}
\usepackage{amsthm}
\usepackage[all]{xy}
\usepackage{tikz}
\usetikzlibrary{arrows.meta,arrows}
\usepackage{hyperref}
\usepackage{esint}

%
%
\linespread{1.3}

\newcommand{\C}{\mathbb{C}}
\newcommand{\Z}{\mathbb{Z}}
\newcommand{\R}{\mathbb{R}}

\newcommand{\Pp}{\mathbb{P}}

\newcommand{\Hh}{\mathbb{H}}

\newcommand{\ACTS}{\curvearrowright}


\newtheorem{lemma}{Lemma}
\newtheorem{corollary}[lemma]{Corollary}
\newtheorem{theorem}[lemma]{Theorem}
\newtheorem{proposition}[lemma]{Proposition}
\newtheorem*{extTheorem}{Theorem} 
\newtheorem*{definition}{Definition}
\theoremstyle{definition}

\title{Kummer Rigidity for Hyperk\"ahler Automorphisms}
\author{Seung uk Jang}
\date{July 26, 2021. Last revised \today}

\begin{document}

\begin{abstract}
  We show that a holomorphic automorphism on a projective hyperk\"ahler manifold that has positive topological entropy and has volume measure as the measure of maximal entropy, is necessarily a Kummer example. This partially extends the analogous results in \cite{CantatDupont2020}\cite{FT18} for complex surfaces.
  
  A trick with Jensen's inequality is used to show that stable and unstable distributions exhibit uniform rate of contraction and expansion, and with them our hyperk\"ahler manifold is shown to be flat, modulo contracting some loci. A result in \cite{GKP} then implies that our hyperk\"ahler manifold is birational to a torus quotient, giving the Kummer example structure.
\end{abstract}

\maketitle

\section{Introduction}

In this paper, we are interested in some rigidity results on holomorphic automorphisms ${f}\colon X\to X$ of a projective hyperk\"ahler manifold $X$. By a hyperk\"ahler manifold $X$ we mean a compact K\"ahler manifold $X$ which is simply connected and the group $H^{2,0}(X,\C)$ is generated by an everywhere nondegenerate holomorphic 2-form $\Omega$ \cite{oguiso2009}. Hyperk\"ahler manifolds are of interest from the classification result by Beauville \cite[Th\'eor\`eme 1]{Beauville83}\cite[Proposition 1]{Beauville85} for compact K\"ahler manifolds with the zero first Chern class.

\subsection{Hyperk\"ahler Kummer Examples}

One of the first examples of hyperk\"ahler manifolds by Beauville \cite[\S{}7]{Beauville83} is constructed from complex tori, now known as generalized Kummer varieties \cite[\S{}21.2]{ghj01}. Likewise, one can consider a holomorphic automorphism $(X,{f})$ on a hyperk\"ahler manifold which is constructed from that on a complex torus. Extending this idea, we use the term \emph{Kummer example}, following \cite[Definition 1.3]{CantatDupont2020}:
\begin{definition}[Kummer Example]
  Let $X$ be a hyperk\"ahler manifold and let ${f}$ be a holomorphic automorphism of $X$. The pair $(X,{f})$ is a \emph{(hyperk\"ahler) Kummer example} if there exist
\begin{itemize}
\item a birational morphism $\phi\colon X\to Y$ onto an orbifold $Y$,
\item a finite orbifold cover $\pi\colon \mathbb{T}\to Y$ by a complex torus $\mathbb{T}$, and
\item an automorphism $\widetilde{f}$ on $Y$ and an automorphism $A$ on $\mathbb{T}$ such that
\[\widetilde{f}\circ\phi = \phi\circ {f}\quad\text{ and }\quad \widetilde{f}\circ\pi=\pi\circ A.\]
\end{itemize}
\end{definition}

Although the word `orbifold' and `orbifold cover' may vary among the literatures, we use the terms as in \cite[\S{13.2}]{Thurston}. By ibid., $Y$ is good in the sense that it is given by a quotient $\mathbb{T}/\Gamma$ by a finite group $\Gamma$. The description of $Y$ in Theorem \ref{lem:00} below is thus general enough to cover the orbifolds of interest.

\subsection{Statement of the Result}

In case if $X$ is a projective complex surface or a K3 surface, results like \cite[\S{1.3} Main Theorem]{CantatDupont2020} or \cite[Theorem 1.1.1]{FT18} imply that, if the (Riemannian) volume measure on $X$ is a measure of maximal entropy for the dynamics $(X,{f})$, then the pair $(X,{f})$ is necessarily a Kummer example. The aim of this paper is to generalize this to the projective hyperk\"ahler case.

For the statement of the main result below, we recall the following. A hyperk\"ahler manifold $X$ of dimension $\dim_\C X=2n$, with a generator $\Omega$ of the group $H^{2,0}(X,\C)$, carries a Ricci-flat K\"ahler metric $\omega$ whose Riemannian volume $\omega^{2n}$ equals to the $(2n,2n)$-form $(\Omega\wedge\overline{\Omega})^n$ \cite[Theorem 5.11]{ghj01}. We call the measure defined by this form the \emph{volume measure} of $X$.

\begin{theorem}
  \label{lem:00} 
  Let $X$ be a projective hyperk\"ahler manifold. Let ${f}\colon X\to X$ be a holomorphic automorphism that has positive topological entropy.
  Suppose the volume measure is the ${f}$-invariant measure of maximal entropy.
  Then the underlying hyperk\"ahler manifold $X$ is a normalization of a torus
  quotient, and ${f}$ is induced from a hyperbolic affine-linear transformation on that
  torus quotient.

  That is, if $\dim_\C X=2n$, there exists a complex torus $\mathbb{T}=\C^{2n}/\Lambda$ and a 
  finite group of toral isomorphisms $\Gamma$ 
  in which $X$ normalizes $\mathbb{T}/\Gamma$, and ${f}$ is induced from a 
  hyperbolic affine-linear transformation 
  $A\colon\mathbb{T}/\Gamma\to\mathbb{T}/\Gamma$.
\end{theorem}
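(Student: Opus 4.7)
The plan is to translate the assumption that volume is the measure of maximal entropy into rigidity of the Lyapunov exponents of ${f}$, then upgrade this measurable rigidity to genuine flatness of the hyperk\"ahler metric on an open dense set, and finally invoke the theorem of \cite{GKP} to conclude that the underlying variety is birational to a torus quotient with ${f}$ induced from a hyperbolic affine-linear map.

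For the first step I would apply Oseledets's theorem: since $H^{2,0}(X,\C)=\C\cdot\Omega$ is one-dimensional, ${f}^*\Omega=c\Omega$ for some constant $c$, and compactness forces $|c|=1$, so ${f}$ preserves the volume form $(\Omega\wedge\overline{\Omega})^n$; the symplectic pairing then arranges the Lyapunov exponents in pairs $\lambda\leftrightarrow -\lambda$ and produces a measurable splitting $TX=E^s\oplus E^u$ on a set of full measure. Gromov--Yomdin in the K\"ahler setting together with standard dynamical-degree computations identify $h_{top}({f})$ with the logarithm of the spectral radius $\lambda_1$ of ${f}^*$ on an appropriate piece of $H^{*,*}(X)$, while the MME hypothesis upgrades Pesin's inequality to an equality, yielding an integral identity for the sum of positive Lyapunov exponents. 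The ``Jensen's-inequality trick'' advertised in the abstract should then turn this integral identity into a pointwise one, forcing the norms $\|d{f}^n|_{E^u}\|$ to be essentially constant and giving uniform expansion on $E^u$ and a symmetric uniform contraction on $E^s$.

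With uniform rates in hand, the next step is to deduce that the Ricci-flat Calabi--Yau metric is flat on an open dense subset: uniform contraction under ${f}$ together with invariance of the metric forces the curvature to vanish along $E^s$ and $E^u$ separately, and the holomorphic symplectic form lets one patch these into vanishing of the full curvature tensor away from an analytic exceptional locus. A birational morphism $\phi\colon X\to Y$ should contract this locus to an orbifold $Y$ with flat metric, at which point \cite{GKP} produces a finite orbifold cover $\pi\colon\mathbb{T}\to Y$ by a complex torus, and positivity of $h_{top}({f})$ forces the descended automorphism $A$ of $\mathbb{T}$ to be hyperbolic. I anticipate the flatness step to be the principal obstacle: in the complex-surface case of \cite{CantatDupont2020, FT18} there are only two Lyapunov exponents and flatness can be produced by Monge--Amp\`ere techniques, whereas in higher dimension there are many exponents and a richer holonomy structure, and controlling how ${f}$ interacts with the exceptional loci before appealing to \cite{GKP} is the technical heart of the argument.
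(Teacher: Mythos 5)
Your overall architecture is the right one, and the high-level milestones you list (Lyapunov rigidity from the MME hypothesis, a Jensen-type argument to upgrade an averaged equality to a pointwise one, flatness of a suitable Ricci-flat metric, and then \cite{GKP} to produce the torus quotient) match the paper's. But several of the individual steps in your sketch are either mis-stated or missing a load-bearing ingredient.

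First, you write that ``uniform contraction under ${f}$ together with invariance of the metric forces the curvature to vanish.'' There is no $f$-invariant K\"ahler metric in play, and this is the central subtlety. The class $[\eta_+]+[\eta_-]$ is not $f$-invariant (since $f^\ast[\eta_\pm]=\lambda^{\pm1}[\eta_\pm]$), and it is only big and nef, not K\"ahler; so the relevant metric is the \emph{incomplete} Ricci-flat metric $\omega_0$ on $X\setminus E$ from \cite{CT15}, approximated in $C^\infty_{\mathrm{loc}}$ by complete hyperk\"ahler metrics $\omega_k$ in nearby K\"ahler classes. The Jensen trick in the paper lives on these approximants: one compares the local quantity $\int_X\bigl[\frac1n\sum_j\cosh\sigma_j^{(k)}\bigr]\omega_k^{2n}$ with the purely cohomological identity $[\omega_0]^{2n-1}.({f}^N)^\ast[\omega_0]=\cosh(Nh)\,[\omega_0]^{2n}$. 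Your sketch supplies the Ledrappier--Young lower bound on $\int\sum\sigma_j^{(k)}$, but you omit this cohomological upper bound, and without it there is nothing for Jensen's inequality to be tight against. So as written the Jensen step does not close.

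Second, you jump from uniform rates to flatness of the metric by asserting that curvature vanishes along $E^s$ and $E^u$ separately and then patching. The paper's argument is different and requires an intermediate step you do not mention: one must first show that $E^\pm$ are \emph{holomorphic} distributions (not merely $C^\infty$ or measurable), which is carried out via a Poincar\'e-map and recurrence argument and is a genuine piece of work. Only then does one get a local holomorphic product decomposition $\omega_0=\omega_0^-\times\omega_0^+$, after which flatness follows from uniqueness and flatness of Kanai/BFL-type connections, not from a direct curvature-vanishing-on-each-factor argument.

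Third, the passage from flatness on $X\setminus E$ to a variety to which \cite{GKP} applies is not merely ``contract the exceptional locus to an orbifold.'' This is the one place the paper uses projectivity essentially: the eigenclasses $[\eta_\pm]$ must first be shown to be $\R$-Cartier, and then Kawamata's rational polyhedrality, basepoint-freeness (BCHM), and the Birkar--Cascini--Lazarsfeld description of augmented base loci are used to construct a contraction $\phi\colon X\to Y$ with $\mathrm{Exc}(\phi)=E$ and $Y$ a normal projective variety with canonical (klt) singularities; $Y$ is a quotient singularity space only a posteriori, after \cite{GKP}. The statement to apply from \cite{GKP} is the one for klt projective varieties with flat tangent sheaf on the regular locus, not one about orbifolds directly. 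You should make the construction of $Y$ and its klt property explicit, since without them the hypotheses of \cite{GKP} are not verified.
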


\subsection{Examples of Hyperk\"ahler Kummer Examples}

Before going through the proof of the main theorem, we list some examples that a reader may have in mind. These examples are brought from \cite[\S{}3.3-3.4]{LoB17}, and we seek for whether each example
\begin{itemize}
  \item is actually a (hyperk\"ahler) Kummer example, and
  \item has the volume as a measure of maximal entropy.
\end{itemize}

Throughout this section, following \cite[\S{}3]{LoB17}, we denote $T$ as a 2-dimensional complex torus, $f_T\colon T\to T$ a hyperbolic automorphism on it; by hyperbolic we mean by $h_{\mathrm{top}}(f_T)>0$ (cf. \cite[Corollary 1.23]{LoB17}). Let $f_T^{\times n}=(f_T,\cdots,f_T)\colon T^n\to T^n$ be the product of $n$ copies of $f_T$. From \cite[Lemma 3.1]{LoB17}, it is known that $f_T^{\times n}$ has unstable and stable foliations $\mathcal{F}^+$ and $\mathcal{F}^-$, obtained by making the $n$-product of those for $(T,f_T)$.
The topological entropy of $(T^n,f_T^{\times n})$ is $n\cdot h_{\mathrm{top}}(f_T)$; cf. \cite[Proposition 3.1.7(4)]{KatokHasselblatt}.

Moreover, combining \cite[Proposition 1.12]{LoB17} and a theorem of Gromov--Yomdin \cite[Theorem 1.10]{LoB17}, a Kummer example $(X,f)$ has the same topological entropy as its associated toral automorphism $(\mathbb{T},A)$. That is, $h_{\mathrm{top}}(X,f)=h_{\mathrm{top}}(\mathbb{T},A)$.

\subsubsection{Generalized Kummer Variety}
\label{subsec:gen-kummer-var}

Denote $K_n(T)$ for the $2n$-dimensional generalized Kummer variety, the notation following \cite[\S{}21.2]{ghj01}. To see how it is built, consider the Hilbert scheme $T^{[n+1]}$ of $(n+1)$ points in $T$ and the $(n+1)$-th symmetric product $\mathrm{Sym}^{n+1}(T)$ of $T$. Consider the map $T^{[n+1]}\to\mathrm{Sym}^{n+1}(T)\to T$, $Z\mapsto\{p_i\}\mapsto\sum p_i$. The fiber $\subset T^{[n+1]}$ of this map at $0\in T$ is called the \emph{generalized Kummer variety} $K_n(T)$.

Following \cite[\S{}3.4]{LoB17}, $f_T$ induces an automorphism $K_n(f_T)\colon K_n(T)\to K_n(T)$. By \cite[Lemma 3.12]{LoB17}, the pair $(X,f)=(K_n(T),K_n(f_T))$ is a Kummer example, by the following data:
\begin{itemize}
  \item $(Y,\widetilde{f})=(T^n/\mathfrak{S}_{n+1},f_T^{\times n}/\mathfrak{S}_{n+1})$,
  \item $(\mathbb{T},A)=(T^n,f_T^{\times n})$, and
  \item the quotient map $q\colon\mathbb{T}\to Y$, which is birationally equivalent to a generically finite meromorphic map $\pi\colon\mathbb{T}\dashrightarrow X$. (The birational equivalence $\phi\colon X\to Y$ may be defined on whole $X$.)
\end{itemize}
Here, $\mathfrak{S}_{n+1}$ is the symmetry group of $(n+1)$ letters, acting on $T^n$ by restricting the natural action $\mathfrak{S}_{n+1}\ACTS T^{n+1}$ on
\[T^n=\{(t_0,t_1,\cdots,t_n)\in T^{n+1}\mid t_0+t_1+\cdots+t_n=0\}.\]
Therefore $(X,f)$ is a Kummer example, with its associated toral automorphism $(T^n,f_T^{\times n})$.

By loc.cit., induced from foliations $\mathcal{F}^+$ and $\mathcal{F}^-$ on $T^n$, we have foliations $\mathcal{F}^+_X$ and $\mathcal{F}^-_X$ on $X$, called unstable and stable foliations respectively. (Here $\mathcal{F}^\pm_X$ may have singular loci.) By the action of $f_T^{\times n}$, each vector tangent to the foliations $\mathcal{F}^+$ and $\mathcal{F}^-$ on $T^n$ are dilated by $e^{h/2}$ and $e^{-h/2}$ respectively, where $h=h_{\mathrm{top}}(f_T)$. The same rates apply for $\mathcal{F}^+_X$ and $\mathcal{F}^-_X$ on $X\setminus\mathrm{Sing}(\mathcal{F}^\pm_X)$.

This gives that the Lyapunov exponents of $(X,f)$ under the volume is $\pm h/2$, with multiplicity $n$ each. The Ledrappier--Young formula \cite[Corollary G]{LY85II} then yields $h_{\mathrm{vol}}(X,f)=nh$. Now since $(X,f)$ is a Kummer example, its topological entropy is also $nh$, as that of $(T^n,f_T^{\times n})$ is $nh$. Hence the volume measure is a measure of maximal entropy.

\subsubsection{Hilbert Scheme of a Kummer Surface}

Denote $K_1(T)$ for the Kummer surface of the 2-dimensional complex torus $T$. Then one has the Hilbert scheme $X:=K_1(T)^{[n]}$ of $n$ points, which is hyperk\"ahler, and the induced map $f:=K_1(f_T)^{[n]}\colon X\to X$, from $f_T\colon T\to T$ (cf. \cite[Proposition 3.10]{LoB17}).

The hyperk\"ahler manifold $X=K_1(T)^{[n]}$ is obtained by normalizing $T^n/\Gamma$, where $\Gamma$ is generated by
\begin{itemize}
  \item[-] an involution $\theta\colon T^n\to T^n$, $\theta(t_1,t_2,\cdots,t_n)=(-t_1,t_2,\cdots,t_n)$, and
  \item[-] the symmetry group $\mathfrak{S}_n\ACTS T^n$ on coordinates.
\end{itemize}
(The group $\Gamma$, generated by involutions, forms the Weyl group of the Lie algebra $B_n$.) The map $f_T^{\times n}$ commutes with $\Gamma$, thus induces a map $\widetilde{f}\colon T^n/\Gamma\to T^n/\Gamma$. The map $f=K_1(f_T)^{[n]}$ then satisfies, with the normalization map $\phi\colon X\to T^n/\Gamma$, $\widetilde{f}\circ\phi=\phi\circ f$. Thus $(X,f)$ is a Kummer example, with associated toral automorphism $(T^n,f_T^{\times n})$.

The foliations $\mathcal{F}^\pm$ on $T^n$ are $\Gamma$-invariant, hence carrying this to the regular locus of $T^n/\Gamma$ and inducing (singular) foliations on $X$, we obtain unstable and stable foliations $\mathcal{F}^+_X$ and $\mathcal{F}^-_X$. Arguing as in \S{\ref{subsec:gen-kummer-var}}, we see that for $(X,f)$, the volume measure is a measure of maximal entropy.

\subsection{Outline of the Proof}

The key points of our main result (Theorem \ref{lem:00}) can be summarized as follows. First, under the assumption of positive topological entropy, the holomophic automorphism $f$ can be shown to be (non-uniformly) hyperbolic. Hence, denoting $\mu$ for the measure of maximal entropy, the stable $E^-$ and unstable $E^+$ distributions can be defined $\mu$-almost everywhere.

Thanks to the hyperk\"ahler structure of the underlying manifold $X$, we can extract the following dynamical data.
\begin{enumerate}
  \item We have a metric $\omega_0$ defined on the complement of a proper $f$-invariant analytic subvariety $E\subset X$, which ``dilates'' along the dynamics. (See \S{\ref{sec:proof-step-1}} for details.)
\end{enumerate}

To elaborate, we have an appropriate rate $\lambda>1$ (see \eqref{eqn:h-and-lambda}) such that, for all $v$ in the unstable distribution, we have $(f^\ast\omega_0)(v,v)=\lambda\cdot\omega_0(v,v)$, and for all $v$ in the stable distribution, we have $(f^\ast\omega_0)(v,v)=\lambda^{-1}\cdot\omega_0(v,v)$. In other words, $f^\ast\omega_0|E^\pm=\lambda^{\pm 1}\cdot\omega_0|E^\pm$.

With this $\omega_0$ in hand, one can verify the following.

\begin{enumerate}
  \addtocounter{enumi}{1}
  \item The dilating metric $\omega_0$ above is a flat metric on its domain $X\setminus E$, demonstrating that our $X$ is a modification of a torus quotient. (See \S{\ref{sec:proof-step-2}} for details.)
\end{enumerate}

Consequently, the holomorphic automorphism in question gives rise to a holomorphic automorphism on a torus quotient. By this, one proves that $(X,f)$ is a Kummer example.

\subsubsection{Example: dilating metric on a Hilbert Scheme of a Kummer Surface}
For an example of the `dilating' metric $\omega_0$, consider the Hilbert scheme example $K_1(T)^{[n]}$ above. For the toral map $f_T\colon T\to T$, one can use its eigendirections $E^+$ and $E^-$ of $f_T$ to construct a `base' metric $\omega_b$ on $T$ that enjoys $f_T^\ast\omega_b|E^\pm=\lambda^{\pm 1}\cdot\omega_b|E^\pm$.

Let $\pi_i\colon T^n\to T$ be the projection onto the $i$-th factor. The `product' of the base metric, $\omega_b^{\boxtimes n}=\pi_1^\ast\omega_b+\cdots+\pi_n^\ast\omega_b$ on $T^n$, essentially constitutes the $\omega_0$ of interest. That is, if we consider the normalization $K_1(T)^{[n]}\to T^n/\Gamma$ and send $\omega_b^{\boxtimes n}$ on $T^n/\Gamma$ to $K_1(T)^{[n]}$, this results the desired $\omega_0$.

\subsubsection{Constructing the metric $\omega_0$}
\label{sec:proof-step-1}

As a preliminary step, recall the results of Gromov and Yomdin \cite{Yomdin}\cite{Gromov}, which give that the topological entropy is realized as the spectral radius of the induced map ${f}^\ast\colon H^\bullet(X)\to H^\bullet(X)$ on the cohomology ring. Specifically, a result by Oguiso \cite[Theorem 1]{oguiso2009} shows that the topological entropy $h_{\mathrm{top}}({f})$ of ${f}$ equals $nh$, where $h$ is the log of the spectral radius of ${f}^\ast\colon H^{1,1}(X)\to H^{1,1}(X)$ on the $(1,1)$-classes. This yields the eigenvectors $[\eta_+],[\eta_-]\in H^{1,1}(X)$ of ${f}^\ast$ with eigenvalues $e^h,e^{-h}$ respectively. Moreover, the sum of these eigenvectors $[\eta_+]+[\eta_-]$ can be chosen to be a big and nef class.

According to Collins and Tosatti \cite[Theorem 1.6]{CT15}, the big and nef class $[\eta_+]+[\eta_-]$ allows us to define a metric $\omega_0$ defined outside of the null locus $E\subset X$ of $[\eta_+]+[\eta_-]$. This metric is ``cohomologous'' to the class, whose sense is precised in the cited theorem or Lemma \ref{lem:01}. By the assumption $\mu=\mathrm{vol}$, we can use Jensen's inequality to obtain the desired dilating property $f^\ast\omega_0|E^\pm=e^{\pm h}\cdot\omega_0|E^\pm$. The same rates apply backwards in time, with the roles of $E^+$ and $E^-$ exchanged.

For readers who are familiar with Filip and Tosatti's work \cite{FT18}, this construction yields a higher dimensional generalization of their Ricci-flat (orbifold) metrics $(\omega_t)_{t\in\R}$ \cite[\S{2.1.7}]{FT18}. The metric $\omega_0$ at $t=0$ takes the same role in our $\omega_0$ sketched above. The trick of Jensen's inequality to show the uniform dilation rates is also parallel to that of \cite[Proposition 3.1.4]{FT18}.

\subsubsection{Flatness of $\omega_0$ and Verifying a Kummer Example}
\label{sec:proof-step-2}

The dilating property $f^\ast\omega_0|E^\pm=e^{\pm h}\cdot\omega_0|E^\pm$ not only yields that $E^\pm$ define smooth foliations, but also \emph{holomorphic} foliations. A flatness result by Benoist, Foulon, and Labourie \cite[Lemme 2.2.3(b)]{BFL92}, together with the holomorphic symplectic form, shows that $\omega_0$ defines a flat metric on the complement $X\setminus E$ of the null locus $E$ of $[\eta_+]+[\eta_-]$.

The final step involves contracting the locus $E\subset X$ to obtain a normal variety $Y$ with canonical singularities, following a result by Boucksom, Cacciola, and Lopez \cite[Theorem A]{BCL14}. This is where we use the hypothesis that $X$ is projective. 

As $X\setminus E$ is flat, $Y$ is also flat on its regular locus. Such normal varieties are known to be a torus quotient, as demonstrated by Greb, Kebekus, and Peternell \cite[Corollary 1.16]{GKP}. (This result can be enhanced, as in \cite[Theorem D]{claudon2020kahler}, for a normal space $Y$ whose regular locus is flat.)

For readers who are familiar with Filip and Tosatti's work \cite{FT18}, this process can be understood as a detour in the construction of their contraction map $\nu\colon X\to Y$ \cite[Proposition 2.1.5(ii)]{FT18}, as well as the construction of the dynamics $(Y,T_Y)$ and the metric $\omega_0$ endowed on $Y$. In the higher-dimensional case, the contraction map $X\dashrightarrow Y$ is a birational map instead, so we avoid doing constructions directly on $Y$ but rather focus on the complement $X\setminus E$ of the null locus, as the complement is isomorphic to the regular locus of $Y$.

We also remark that the proof of the flatness using a result by Benoist, Foulon, and Labourie (see Lemma \ref{lem:20}) gives a simplified proof of the flatness result discussed in \cite[Proposition 3.2.1]{FT18}.

\subsection{The Hypothesis $\mu=\mathrm{vol}$}

For the measure of maximal entropy $\mu$, in Theorem \ref{lem:00} we assumed that $\mu$ \emph{equals} to the volume measure. This contrasts with previous results of Cantat and Dupont \cite[\S{1.3} Main Theorem]{CantatDupont2020} or Filip and Tosatti \cite[Theorem 1.1.1]{FT18}, where they merely assumed $\mu\ll\mathrm{vol}$.

To explain why we see this difference, we first note the ``Green $(1,1)$-currents'' $S^\pm$ for the system $(X,f)$, as presented in Proposition \ref{lem:ergodicity} below. In some explicit examples like generalized Kummer varieties, one can compute these currents and find that \emph{restricting the Green currents $S^+$ and $S^-$ along the unstable and stable manifolds, respectively, yields metric forms on unstable and stable manifolds}.

Despite our attempt, we were not able to establish this property as a general fact of Green $(1,1)$-currents, under the assumption that $\mu\ll\mathrm{vol}$. On the other hand, once such a property is established, the Hopf arguments as in \cite[\S\S{4--5}]{CantatDupont2020} \cite[\S{5}]{FT18} come into play. Such arguments lead to the conclusion that $(X,f)$ is \emph{uniformly} hyperbolic. Consequently, following standard arguments (see, for instance, \cite[Theorem 4.2.1]{FT18}), it can be verified that $\mu\ll\mathrm{vol}$ implies $\mu=\mathrm{vol}$, hence replacing the hypothesis $\mu=\mathrm{vol}$ to $\mu\ll\mathrm{vol}$.

\subsection{Contents of the Paper} 
Section \ref{sec:prelim} is placed to introduce various notations and structures that we will recall from the hyperk\"ahler dynamics $(X,{f})$ of interest.

Section \ref{sec:analysis-on-approximate-metrics} discusses some properties of the approximate metrics (cf. Lemma \ref{lem:01}), to derive the uniform expansion and contraction observed for $\omega_0$ (cf. Corollary \ref{lem:14})

Section \ref{sec:stable-unstable-distributions} is devoted to the study of stable and unstable manifolds, especially focused on that they are holomorphic. The section ends with a proof that the metric $\omega_0$ gives a flat metric.

Section \ref{sec:upshots-of-flatness} explains how flatness readily gives a proof of Theorem \ref{lem:00}, the main theorem.


\subsection*{Acknowledgements}
The author would like to thank Simion Filip, Alex Eskin, Sang-hyun Kim, Seok Hyeong Lee, Junekey Jeon, and Hongtaek Jung for useful discussions, Valentino Tosatti for significant comments on an earlier draft. The author also want to thank for Serge Cantat, Beno\^it Claudon, and Christophe Dupont for telling extensive feedbacks, surrounding backgrounds, and possible improvements about this work. 
This material is based upon work supported by the National Science Foundation under Grant No. DMS-2005470 and DMS-2305394.

\section{Preliminaries on Hyperk\"ahler Manifolds and their Dynamical Structures}
\label{sec:prelim}

Hyperk\"ahler manifolds are rich in structures, which lead us to introduce various notations and list basic facts that are required to study them. This section is devoted to that purpose.

\subsection{Hyperk\"ahler Structures}
\label{sec:hyperkahler-structures}

Let $(X,\omega,\Omega)$ be a (projective) hyperk\"ahler manifold, whose underlying manifold $X$ has $\dim_\C X=2n$ and is simply connected. Here $\omega$ is a fixed hyperk\"ahler metric on $X$, and $\Omega$ is an everywhere nondegenerate holomorphic 2-form that $X$ should have. We will call $\Omega$ as a \emph{holomorphic symplectic form} on $X$. That $\omega$ is hyperk\"ahler may be understood that the tensor $\Omega$ is flat with respect to $\omega$. For $n=1$, a hyperk\"ahler manifold is nothing but a K3 surface.

This $\Omega$ generates the $(2,0)$-Hodge group: $H^{2,0}(X,\C)=\C.\Omega$ \cite[Proposition 23.3]{ghj01}. Moreover, we declare the \emph{volume form} $\mathrm{vol}=(\Omega\wedge\overline{\Omega})^n$ associated to the holomorphic symplectic form; we normalize $\Omega$ so that $\mathrm{vol}(X)=1$. This volume form is same as that of the Riemannian geometry on $X$: that is, $\omega^{2n}=c\cdot\mathrm{vol}$ for some constant $c>0$. Later, we will impose a normalization condition \eqref{eqn:eigenclass-normalization} for this $c$.

A hyperk\"ahler manifold has the natural quadratic form on $H^2$ that generalizes the intersection form of K3 surfaces. This is called the Beauville--Bogomolov--Fujiki quadratic form $q$ \cite[Definition 22.8]{ghj01} and is defined as follows. Suppose $\alpha\in H^2(X,\C)=H^{2,0}(X)\oplus H^{1,1}(X)\oplus H^{0,2}(X)$ is decomposed as $\alpha=c_1\Omega+\beta+c_2\overline{\Omega}$, where $c_1,c_2\in\C$ and $\beta\in H^{1,1}(X)$. Then the number $q(\alpha)$ is defined as
\begin{equation}
  \label{eqn:1.1}
  q(\alpha):=c_1c_2+\frac{n}{2}\int_X\beta^2(\Omega\wedge\overline{\Omega})^{n-1}.
\end{equation}
We will abuse the notation and denote $q(\alpha,\alpha')$ for the symmetric bilinear form from the quadratic form $q$. The form $q$, if restricted to $H^{1,1}(X,\R)$, has the signature $(1,h^{1,1}-1)$ \cite[Corollary 23.11]{ghj01}. One may view this as the Hodge Index Theorem for hyperk\"ahler manifolds.

The Beauville--Bogomolov--Fujiki quadratic form $q$, together with the Beauville--Fujiki relation \cite[Proposition 23.14]{ghj01}, gives the following formula. Given a Ricci-flat K\"ahler metric $\omega'$, we have
\begin{equation}
  \label{eqn:1.2} 
  (\omega')^{2n}=q(\omega')^n\binom{2n}{n}\cdot(\Omega\wedge \overline{\Omega})^n
\end{equation}
as differential forms.

\subsection{Invariance}

Given a hyperk\"ahler manifold $(X,\omega,\Omega)$, let ${f}\colon X\to X$ be a holomorphic automorphism that has positive topological entropy $h_{\mathrm{top}}({f})>0$. Various structures of $X$ are preserved under ${f}$.

Obviously, as a holomorphic map, ${f}$ preserves the complex structure $I$. Therefore we preserve the holomorphic sheaves $\Omega^p_X$, Hodge groups $H^{p,q}(X)$, etc. by ${f}$. We also note that ${f}$ preserves the K\"ahler cone.

Although the hyperk\"ahler metric $\omega$ is seldom invariant under ${f}$, the holomorphic symplectic form $\Omega$ is almost invariant, in the following sense.

\begin{lemma}
  \label{lem:02}
  There exists a constant $k_{f}$, with absolute value 1, such that 
  ${f}^\ast\Omega=k_{f}\Omega$.
\end{lemma}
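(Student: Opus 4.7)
The plan is to use the one-dimensionality of $H^{2,0}(X,\C)$ to force $f^{\ast}\Omega$ to be a scalar multiple of $\Omega$, and then to pin down the modulus of that scalar by comparing volumes.

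First I would observe that, by Dolbeault/Hodge theory on the compact K\"ahler manifold $X$, we have $H^0(X,\Omega^2_X)\cong H^{2,0}(X,\C)=\C\cdot\Omega$, so $\Omega$ generates the space of holomorphic $2$-forms on $X$. Since ${f}$ is a holomorphic automorphism, ${f}^{\ast}\Omega$ is again a global holomorphic $2$-form; therefore there exists a constant $k_{{f}}\in\C$ with ${f}^{\ast}\Omega=k_{{f}}\Omega$.

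Next, I would pin down $|k_{{f}}|$ using the volume form $\mathrm{vol}=(\Omega\wedge\overline{\Omega})^n$, which was normalized so that $\mathrm{vol}(X)=1$. Conjugating gives ${f}^{\ast}\overline{\Omega}=\overline{k_{{f}}}\,\overline{\Omega}$, and hence
\[
  {f}^{\ast}\mathrm{vol}={f}^{\ast}(\Omega\wedge\overline{\Omega})^n=|k_{{f}}|^{2n}(\Omega\wedge\overline{\Omega})^n=|k_{{f}}|^{2n}\mathrm{vol}.
\]
Integrating over $X$, and using that the holomorphic automorphism ${f}$ is orientation-preserving so $\int_X {f}^{\ast}\mathrm{vol}=\int_X\mathrm{vol}=1$, I obtain $|k_{{f}}|^{2n}=1$, hence $|k_{{f}}|=1$.

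There is no genuine obstacle here: the argument is a direct consequence of $\dim H^{2,0}(X,\C)=1$ (already recorded in the preliminaries) and of the naturality of pullback under diffeomorphisms. The only step that might warrant a sentence of justification is the passage from ``${f}^{\ast}\Omega$ is a holomorphic $2$-form'' to ``${f}^{\ast}\Omega\in\C\cdot\Omega$,'' which I would handle by the Hodge-theoretic identification above (alternatively, one can argue on the level of cohomology classes since ${f}^{\ast}$ preserves the Hodge decomposition, and then promote the equality from cohomology to the level of forms via $\partial\bar\partial$-uniqueness for holomorphic forms in their class).
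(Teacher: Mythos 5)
Your proof is correct and follows essentially the same route as the paper: identify $H^0(X,\Omega^2_X)\cong H^{2,0}(X,\C)=\C\cdot\Omega$ to get $f^\ast\Omega=k_f\Omega$, then use the volume form to pin down $|k_f|$. You have merely spelled out the final step more explicitly (pulling back $\mathrm{vol}$ and integrating), whereas the paper simply states that $f$ preserves $(\Omega\wedge\overline{\Omega})^n$ and concludes $|k_f|=1$; your version makes this invariance a consequence rather than an assumption, which is a slight but welcome clarification.
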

\begin{proof}
  By Dolbault isomorphism, we can identify the group $H^{2,0}(X)=\C.\Omega$ with $H^0(X,\Omega_X^2)=\C.\Omega$. 
  Therefore any holomorphic section of 
  the vector bundle $\Omega_X^2$ on $X$ is proportional to 
  $\Omega\colon X\to\Omega_X^2$.

  Now ${f}$ induces another holomorphic section of $\Omega_X^2$, by 
  ${f}^\ast\Omega\colon X\xrightarrow{{f}}X\xrightarrow{\Omega}\Omega_X^2$. 
  This has to be proportional to $\Omega\colon X\to\Omega_X^2$, thus we have 
  ${f}^\ast\Omega=k_{f}\Omega$ for some $k_{f}\in\C$.
  Because ${f}$ preserves the volume $(\Omega\wedge\overline{\Omega})^n$,
  $|k_{f}|=1$ follows.
\end{proof}

Now recall that Beauville--Bogomolov--Fujiki form is defined only with the form $\Omega$. Because of that, it is natural to guess the

\begin{lemma}
  \label{lem:03}
  The Beauville--Bogomolov--Fujiki form $q$ is preserved under ${f}$. That is, 
  $q({f}^\ast\alpha)=q(\alpha)$ for any 2-class $\alpha\in H^2(X)$.
\end{lemma}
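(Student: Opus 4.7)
The plan is a direct unwinding of the definition of $q$ combined with Lemma \ref{lem:02}. The ingredients I would use are: (i) $f$ is holomorphic, so ${f}^\ast$ respects the Hodge decomposition $H^2(X,\C)=H^{2,0}\oplus H^{1,1}\oplus H^{0,2}$; (ii) Lemma \ref{lem:02} says ${f}^\ast\Omega=k_{{f}}\Omega$ with $|k_{{f}}|=1$, so also ${f}^\ast\overline{\Omega}=\overline{k_{{f}}}\,\overline{\Omega}$; (iii) ${f}$ is a biholomorphism of the compact manifold $X$, hence preserves orientation and integrals of top-degree forms.

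First, I would write $\alpha=c_1\Omega+\beta+c_2\overline{\Omega}$ according to the Hodge decomposition, so that by (i) and (ii)
\[
{f}^\ast\alpha = c_1 k_{{f}}\,\Omega + {f}^\ast\beta + c_2\overline{k_{{f}}}\,\overline{\Omega},
\]
with ${f}^\ast\beta\in H^{1,1}(X)$. Plugging into the definition \eqref{eqn:1.1} of $q$ gives
\[
q({f}^\ast\alpha) = (c_1 k_{{f}})(c_2\overline{k_{{f}}}) + \frac{n}{2}\int_X ({f}^\ast\beta)^2(\Omega\wedge\overline{\Omega})^{n-1}.
\]
The first summand simplifies to $c_1 c_2|k_{{f}}|^2=c_1c_2$ by Lemma \ref{lem:02}.

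For the integral, the trick is to rewrite $(\Omega\wedge\overline{\Omega})^{n-1}$ as a pullback. Since
\[
{f}^\ast(\Omega\wedge\overline{\Omega})^{n-1} = (k_{{f}}\overline{k_{{f}}})^{n-1}(\Omega\wedge\overline{\Omega})^{n-1} = (\Omega\wedge\overline{\Omega})^{n-1},
\]
we obtain
\[
\int_X ({f}^\ast\beta)^2(\Omega\wedge\overline{\Omega})^{n-1} = \int_X {f}^\ast\!\left[\beta^2(\Omega\wedge\overline{\Omega})^{n-1}\right] = \int_X \beta^2(\Omega\wedge\overline{\Omega})^{n-1},
\]
where the last equality uses that ${f}$ is an orientation-preserving diffeomorphism. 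Combining these two computations yields $q({f}^\ast\alpha)=q(\alpha)$, as desired.

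This argument is genuinely routine; there is no real obstacle beyond keeping track of the decomposition. The only point deserving a moment of care is the one already encoded in Lemma \ref{lem:02}: that the factor $k_{{f}}$ has modulus one, which is exactly what makes both the $c_1c_2$ factor and the form $(\Omega\wedge\overline{\Omega})^{n-1}$ ${f}$-invariant simultaneously. Everything else is Hodge-theoretic bookkeeping.
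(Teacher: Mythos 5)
Your proof is correct and follows essentially the same route as the paper: decompose $\alpha$ via Hodge theory, apply Lemma \ref{lem:02} to handle $\Omega$ and $\overline{\Omega}$, observe $|k_{f}|=1$ kills the extra factors and makes $\Omega\wedge\overline{\Omega}$ invariant, and finish with invariance of top-degree integrals under a biholomorphism. The only cosmetic difference is how you justify that last invariance (orientation-preserving diffeomorphism vs.\ the paper's remark that $f^\ast$ acts trivially on $H^{4n}$), which amounts to the same thing.
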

\begin{proof}
  First we note that ${f}^\ast(\Omega\overline{\Omega})=|k_{f}|^2\Omega\overline{\Omega}=\Omega\overline{\Omega}$. Thus ${f}^\ast$ acts on $H^{4n}(X)=\C.(\Omega\overline{\Omega})^n$ trivially, which gives the identity $\int_X{f}^\ast\gamma=\int_X\gamma$ for any $4n$-form $\gamma$.

  Write $\alpha=c_1\Omega+\beta+c_2\overline{\Omega}$, where $c_1,c_2\in\C$ 
  and $\beta\in H^{1,1}(X)$. Then
  ${f}^\ast\alpha=c_1({f}^\ast\Omega)+({f}^\ast\beta)+c_2({f}^\ast\overline{\Omega})=c_1 k_{f}\Omega+({f}^\ast\beta)+c_2\overline{k}_{f}\overline{\Omega}$. By \eqref{eqn:1.1}, we compute
  \begin{align*}
    q({f}^\ast\alpha) &= (c_1k_f)(c_2\overline{k}_f)
        +\frac{n}{2}\int_X({f}^\ast\beta)^2(\Omega\overline{\Omega})^{n-1} \\
    &= c_1c_2|k_f|^2 
        +\frac{n}{2}\int_X({f}^\ast\beta)^2({f}^\ast(\Omega\overline{\Omega}))^{n-1} \\
    &= c_1c_2
        +\frac{n}{2}\int_X{f}^\ast[\beta^2(\Omega\overline{\Omega})^{n-1}] \\
    &= c_1c_2+\frac{n}{2}\int_X\beta^2(\Omega\overline{\Omega})^{n-1}=q(\alpha),
  \end{align*}
  and get the equation demanded.
\end{proof}

\subsection{The Eigenclasses}
\label{subsec:eigenclass}

Lemma \ref{lem:03} readily implies that ${f}^\ast$ is an \emph{isometry} of the hyperbolic space $\Hh_X$, the connected component of the hyperboloid $\{x\in H^{1,1}(X,\R) \mid q(x)=1\}$ that contains a K\"ahler class. We equip the metric $q$ to make $\Hh_X$ a Riemannian manifold.

The isometry ${f}^\ast$ is loxodromic in the sense of \cite[\S{2.3.2}]{Cantat-lecturenote}, if $h_{\mathrm{top}}(f)>0$. This fact follows from the following estimate of the first dynamical degree, by Oguiso.

\begin{extTheorem}[{\cite[Thm. 1.1]{oguiso2009}}]
  Let $X$ be a hyperk\"ahler manifold of dimension $2n$ and ${f}$ be a holomorphic automorphism of $X$. If $d_k({f})$ is the $k$-th dynamical degree of ${f}$, i.e., the spectral radius of ${f}^\ast$ on $H^{k,k}(X,\R)$, then we have $d_{2n-k}({f})=d_k({f})=d_1({f})^k$ for $0\leq k\leq n$. Moreover, the topological entropy $h_{\mathrm{top}}({f})$ is $n\log d_1({f})$.
\end{extTheorem}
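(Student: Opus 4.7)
The plan is to combine three classical ingredients: the Gromov--Yomdin identification $h_{\mathrm{top}}({f}) = \log\max_k d_k({f})$; the Khovanskii--Teissier--Gromov log-concavity $d_k({f})^2 \geq d_{k-1}({f})\cdot d_{k+1}({f})$; and the Fujiki formula \eqref{eqn:1.2} together with its polarisation. The first two are general K\"ahler facts, while the third turns top intersection numbers on $H^{1,1}$ into polynomials in the BBF form $q$ and is what makes the hyperk\"ahler case rigid.

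First I would produce the lower bound $d_n({f}) \geq d_1({f})^n$. By Lemma \ref{lem:03}, $f^{\ast}$ is an isometry of $(H^{1,1}(X,\R),q)$ of signature $(1,h^{1,1}-1)$; since $h_{\mathrm{top}}({f})>0$, the induced action on $\Hh_X$ is loxodromic, with an attracting eigenvector $\eta_{+}\in H^{1,1}(X,\R)$ satisfying $f^{\ast}\eta_{+}=\lambda\eta_{+}$, $\lambda=d_1({f})>1$. The isometry property forces $q(\eta_+)=0$ (from $\lambda^2 q(\eta_+)=q(\eta_+)$), and the loxodromic dynamics realises $\eta_+$ as a limit of $\lambda^{-m}(f^{m})^{\ast}\omega$ for any K\"ahler $\omega$, so $\eta_+$ lies in the closure of the K\"ahler cone with $q(\eta_+,\omega)>0$. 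Polarising \eqref{eqn:1.2} and using $q(\eta_+)=0$ to annihilate every term containing that factor leaves
\begin{equation*}
\int_X \eta_+^{\,n}\wedge\omega^{n}\;=\;c_n\, q(\eta_+,\omega)^{n}\;>\;0,
\end{equation*}
so $\eta_+^{\,n}\neq 0$ in $H^{n,n}(X,\R)$ and it is an eigenvector of $f^{\ast}$ with eigenvalue $\lambda^{n}$.

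For the matching upper bound, log-concavity together with $d_0({f})=1$ makes the ratios $d_k({f})/d_{k-1}({f})$ non-increasing, so telescoping yields $d_k({f})\leq d_1({f})^k$; combined with the lower bound this gives $d_n({f}) = d_1({f})^n$, and concavity of $k\mapsto\log d_k({f})$ pinched between the known values at $k=0$ and $k=n$ (which already lie on the line of slope $\log d_1({f})$) forces $d_k({f}) = d_1({f})^k$ for every $0\leq k\leq n$. The duality $d_{2n-k}({f})=d_k({f})$ then follows from Poincar\'e duality, which gives $d_k({f})=d_{2n-k}({f}^{-1})$, together with $d_1({f}^{-1})=d_1({f})$ (eigenvalues of an isometry of the indefinite form $q$ come in reciprocal pairs), upon applying the equality just proved to $f^{-1}$. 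Finally, Gromov--Yomdin delivers $h_{\mathrm{top}}({f})=\log d_n({f})=n\log d_1({f})$.

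The most delicate step I anticipate is justifying that $\eta_+$ actually lies in the closure of the K\"ahler cone: this is precisely what converts the polarised Fujiki identity into a strict positivity and rules out the degenerate possibility $\eta_+^{\,n}=0$. It uses both the $f^{\ast}$-invariance of the K\"ahler cone (automatic for an automorphism) and the loxodromic structure on $\Hh_X$, which together show that K\"ahler classes contract under $\lambda^{-m}(f^m)^{\ast}$ to $\R_{>0}\eta_+$. Once the lower bound $d_n\geq d_1^n$ is secured, the rest is formal bookkeeping with log-concavity and Poincar\'e duality.
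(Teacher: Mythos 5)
The paper does not prove this result; it cites \cite[Thm.\ 1.1]{oguiso2009} verbatim as an external black box, so there is no in-text argument to compare against. Your proof is, however, essentially correct and reproduces what is in substance Oguiso's own argument: the decisive hyperk\"ahler input is the Beauville--Fujiki polynomial identity $\int_X\alpha^{2n}=c_X\,q(\alpha)^n$, whose polarisation at the isotropic nef class $\eta_+$ collapses to $\int_X\eta_+^n\wedge\omega^n=c_n\,q(\eta_+,\omega)^n>0$, giving $\eta_+^n\neq 0$ and hence $d_n\geq d_1^n$; the Khovanskii--Teissier log-concavity then pins everything down, Poincar\'e duality together with the reciprocal spectrum of the $q$-isometry $f^\ast$ yields $d_{2n-k}=d_k$, and Gromov--Yomdin converts $d_n=d_1^n$ into the entropy formula. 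The positivity $q(\eta_+,\omega)>0$ that you flag as the delicate point is indeed the crux, and your justification is sound: $\eta_+$ is a limit of normalised pullbacks of a K\"ahler class, hence nef, and the reverse Cauchy--Schwarz inequality for a form of signature $(1,h^{1,1}-1)$ gives $q(\eta_+,\omega)>0$ strictly, since $\eta_+\not\propto\omega$.

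Two small tidying remarks. First, your use of \eqref{eqn:1.2} should really be the general Beauville--Fujiki relation (the paper's reference is \cite[Prop.~23.14]{ghj01}); equation \eqref{eqn:1.2} as written is the specialisation to Ricci-flat representatives, whereas you need the polynomial identity on all of $H^2(X,\R)$ to evaluate it at $s\eta_++t\omega$. Second, the theorem as stated does not assume $h_{\mathrm{top}}(f)>0$, and your loxodromic argument presupposes $d_1(f)>1$. The zero-entropy case is trivial but should be dispatched: if $d_1(f)=1$ then log-concavity gives $d_k(f)\leq 1$, while $f^\ast$ preserving the integral lattice forces $d_k(f)\geq 1$, whence all $d_k=1$ and $h_{\mathrm{top}}=0=n\log d_1(f)$. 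With these two adjustments the argument is complete.
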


Thus if $h_{\mathrm{top}}({f})>0$, then $\log d_1({f})>0$ as well. Define the numbers
\begin{equation}
  \label{eqn:h-and-lambda}
  h:=\log d_1({f}),\quad\lambda :=d_1({f}).
\end{equation}
(A caveat is that the entropy is $h_{\mathrm{top}}({f})=nh$, not $h$ as the notation may suggest.) Thus we have an exponential estimate $\|({f}^\ast)^N\|=O(\lambda^N)$ on $\Hh_X$, so ${f}^\ast$ must be loxodromic.

Because ${f}^\ast$ is loxodromic, we can find eigenclasses $[\eta_+],[\eta_-]\in H^{1,1}(X,\R)$ of real single eigenvalues $\lambda,\lambda^{-1}$ respectively (cf. \cite[Theorem 1]{oguiso2007}). By the prescribed metric $\omega$ in \S{\ref{sec:hyperkahler-structures}}, we can specifically let the eigenclasses $[\eta_\pm]$ be limits of $\lambda^{-n}(f^\ast)^{\pm n}[\omega]$, $n\to\infty$, respecting $\pm$. Such limits are well-defined (see \cite[\S{2.3.2}]{Cantat-lecturenote}) and define nef classes. They have the following further properties.

\begin{proposition}
  \label{lem:04}
  The eigenclasses $[\eta_+],[\eta_-]\in H^{1,1}(X,\R)$ satisfy the followings.
  \begin{enumerate}[(a)]
    \item \label{enum:lem:04:a} Isotropic: $q([\eta_+])=q([\eta_-])=0$.
    \item \label{enum:lem:04:b} Nilpotent: $[\eta_+]^{n+1}=[\eta_-]^{n+1}=0$.
    \item \label{enum:lem:04:c} Big and Nef: $[\eta_+]+[\eta_-]$ is a big and nef class.
    \item \label{enum:lem:04:d} Spectrally convergent: for any K\"ahler class $\alpha\in H^{1,1}(X,\R)$, we have
    \begin{align*}
      \lambda^{-n}({f}^\ast)^n\alpha &\to \frac{q(\alpha,[\eta_-])}{q([\eta_+],[\eta_-])}[\eta_+], \\
      \lambda^{-n}({f}^\ast)^{-n}\alpha &\to \frac{q(\alpha,[\eta_+])}{q([\eta_+],[\eta_-])}[\eta_-].
    \end{align*}
  \end{enumerate}
\end{proposition}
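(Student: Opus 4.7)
The plan is to exploit the isometric action of $f^{\ast}$ on $(H^2(X,\R), q)$ established in Lemma \ref{lem:03}, combined with Oguiso's theorem on dynamical degrees and the Beauville--Fujiki relation, so that all four statements reduce to essentially linear algebra on the Minkowski-type space $(H^{1,1}(X,\R), q)$ of signature $(1, h^{1,1}-1)$.

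For (\ref{enum:lem:04:a}), $q$-invariance gives $q([\eta_+]) = q(f^{\ast}[\eta_+]) = \lambda^2 q([\eta_+])$, and since $\lambda>1$ this forces $q([\eta_+])=0$; the case of $[\eta_-]$ is symmetric. For (\ref{enum:lem:04:b}), since $f^{\ast}$ is a ring homomorphism on cohomology, it acts on $[\eta_+]^k \in H^{k,k}(X,\R)$ by multiplication by $\lambda^k$, so if $[\eta_+]^k\neq 0$ then $\lambda^k$ is bounded above by the spectral radius $d_k(f)$. By Oguiso's theorem, $d_{n+1}(f)=d_{n-1}(f)=\lambda^{n-1}<\lambda^{n+1}$, forcing $[\eta_+]^{n+1}=0$; similarly for $[\eta_-]$.

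For (\ref{enum:lem:04:d}), I would use the $f^{\ast}$-invariant $q$-orthogonal decomposition $H^{1,1}(X,\R) = \R[\eta_+] \oplus \R[\eta_-] \oplus V$, where $V$ is the $q$-orthogonal complement of the hyperbolic plane spanned by $[\eta_+]$ and $[\eta_-]$. By the signature hypothesis, $q$ is negative definite on $V$, and because $f^{\ast}$ preserves $q$ it acts on $V$ through a compact orthogonal group, hence with bounded orbits. Writing a K\"ahler class $\alpha = a[\eta_+] + b[\eta_-] + v$ with $v\in V$ and using $q([\eta_\pm])=0$ together with $q(v,[\eta_\pm])=0$, one solves $a=q(\alpha,[\eta_-])/q([\eta_+],[\eta_-])$ and symmetrically for $b$. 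Then $\lambda^{-n}(f^{\ast})^n\alpha = a[\eta_+] + b\lambda^{-2n}[\eta_-] + \lambda^{-n}(f^{\ast})^n v$, and the last two terms decay in the limit, yielding (\ref{enum:lem:04:d}); the reverse iterate is entirely symmetric.

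For (\ref{enum:lem:04:c}), nefness of $[\eta_+]+[\eta_-]$ is then immediate: since $f^{\ast}$ preserves the K\"ahler cone, each $\lambda^{-n}(f^{\ast})^n\alpha$ is a positive scalar multiple of a K\"ahler class, so the limit $[\eta_+]$ is nef, as is $[\eta_-]$ by the reverse argument. For bigness, $q([\eta_+]+[\eta_-])=2q([\eta_+],[\eta_-])$ should be strictly positive, because $[\eta_+]$ and $[\eta_-]$ are distinct isotropic rays in the closure of the positive cone of signature $(1, h^{1,1}-1)$, and two such rays have strictly positive $q$-pairing by a direct light-cone computation. The Beauville--Fujiki relation \eqref{eqn:1.2} then gives $\int_X([\eta_+]+[\eta_-])^{2n}>0$; combined with nefness and projectivity, Huybrechts' standard criterion on hyperk\"ahler manifolds (a nef class of positive BBF square is big) delivers bigness. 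The one delicate point I anticipate is rigorously pinning down $q([\eta_+],[\eta_-])>0$, but this is a routine Minkowski-signature light-cone argument once $[\eta_\pm]$ are known to lie in the closure of the positive cone.
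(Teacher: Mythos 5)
Your proof is correct, and parts (b) and (d) take genuinely different routes from the paper. For (a) you match the paper exactly. For (b), the paper invokes the structural result of Verbitsky (\cite[Theorem 15.1]{verbitsky1995}, also \cite[Proposition 24.1]{ghj01}) that $\alpha^{n+1}=0$ iff $q(\alpha)=0$ on $H^2$ of a hyperk\"ahler manifold, whereas you derive nilpotence purely from Oguiso's dynamical degree relation $d_{n+1}(f)=d_{n-1}(f)=\lambda^{n-1}<\lambda^{n+1}$: if $[\eta_+]^{n+1}\neq0$ it would be an eigenvector of $f^\ast$ on $H^{n+1,n+1}$ with eigenvalue exceeding the spectral radius. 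This is a nice elementary substitute that avoids the SL(2)-representation machinery behind Verbitsky's theorem. For (d), the paper simply cites \cite[\S 2.3.2]{Cantat-lecturenote}; you unfold what that reference contains, namely the orthogonal decomposition into the hyperbolic plane $\R[\eta_+]\oplus\R[\eta_-]$ and its negative-definite complement $V$, on which $f^\ast$ acts through a compact group. For (c), your argument is essentially the paper's, just reorganized.

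One logical wrinkle you flagged yourself deserves sharpening. Your decomposition in (d) presupposes $q([\eta_+],[\eta_-])\neq0$ (so that the plane is genuinely hyperbolic and $V$ well-defined), and you propose to get $q([\eta_+],[\eta_-])>0$ from a light-cone argument that needs $[\eta_\pm]$ already in the closure of the positive cone — but in your ordering, nefness of $[\eta_\pm]$ is derived \emph{from} (d). The clean way to break the cycle is: nonvanishing $q([\eta_+],[\eta_-])\neq0$ is a pure signature fact, needing no positivity — two linearly independent isotropic vectors in a space of signature $(1,h^{1,1}-1)$ cannot be $q$-orthogonal, since otherwise they span a totally isotropic 2-plane, impossible in Lorentzian signature. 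This is exactly what the paper uses for bigness (``the cone $\{q=0\}$ cannot contain a linear space of dimension $>1$''). Once you have $q([\eta_+],[\eta_-])\neq0$, the decomposition and convergence in (d) go through; taking $\alpha$ K\"ahler then forces the coefficient $a$ to be positive after suitably choosing the sign of $[\eta_+]$, which simultaneously gives nefness and pins $q([\eta_+],[\eta_-])>0$ under the normalization. With that rerouting, your proof is complete.
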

\begin{proof}
  The isotropy property is obvious: $q([\eta_\pm])=q({f}^\ast[\eta_\pm])=q(\lambda^{\pm 1}[\eta_\pm])=\lambda^{\pm2}q([\eta_\pm])$ and $\lambda>1$ gives the result.

  The nilpotence follows from \cite[Proposition 24.1]{ghj01}\cite[Theorem 15.1]{verbitsky1995}: for any $\alpha\in H^2(X,\C)$, $\alpha^{n+1}=0$ iff $q(\alpha)=0$.


  We claim that the nef class $[\eta_+]+[\eta_-]$ is big. Note the Beauville--Fujiki relation $q(\alpha)^n=\binom{2n}{n}^{-1}\int_X\alpha^{2n}$ \cite[Proposition 23.14]{ghj01}. If $[\eta_+]+[\eta_-]$ is not big, this implies $\int_X(\eta_++\eta_-)^{2n}=0$ and thus $q([\eta_+]+[\eta_-])=0$. But then $q([\eta_+],[\eta_-])=0$ follows, so $q(c_1[\eta_+]+c_2[\eta_-])=0$ for all $c_1,c_2\in\R$. This contradicts with that the cone $\{q=0\}$ cannot contain a linear space of dimension $>1$.

  The spectral convergences are in general the case whenever $q(\alpha)>0$: see \cite[\S{2.3.2}]{Cantat-lecturenote}.
\end{proof}

We notice that $[\eta_+]$ and $[\eta_-]$ depend on the scale of $\omega$. To avoid confusion caused by this, we impose a normalization condition
\begin{equation}
  \label{eqn:eigenclass-normalization}
  \int_X(\eta_++\eta_-)^{2n} = 1.
\end{equation}
By this, we have $q([\eta_+]+[\eta_-])=2q([\eta_+],[\eta_-])=\binom{2n}{n}^{-1/n}$.

\subsection{Null Locus and Metric Approximations}

Even though the class $[\eta_+]+[\eta_-]$ is big and nef, it is very unlikely to be K\"ahler. Because of that, we introduce the obstruction studied in \cite{CT15} for a big and nef class to be K\"ahler.

For sake of introducing an obstruction, one introduces the 
\emph{null locus} $E\subset X$ of the class $[\eta_+]+[\eta_-]$, which is the union of all 
subvarieties $V\subset X$ such that $\int_{V}(\eta_++\eta_-)^{\dim V}=0$ \cite[p.1168]{CT15}. 
By \cite[Theorem 1.1]{CT15}, this null locus is the same as the non-K\"ahler locus of $[\eta_+]+[\eta_-]$, which is a proper analytic subvariety of $X$.

\begin{proposition}
  The null locus $E$ defined above is ${f}$-invariant.
\end{proposition}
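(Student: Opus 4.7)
The plan is to unfold the definition of the null locus at the level of individual irreducible subvarieties. Let $V \subset X$ be any irreducible subvariety of dimension $d$ contributing to $E$, so $\int_V (\eta_+ + \eta_-)^d = 0$. Since ${f}$ is a biholomorphism, the image ${f}(V)$ is again an irreducible subvariety of dimension $d$, and it suffices to show $\int_{{f}(V)} (\eta_+ + \eta_-)^d = 0$ (so that ${f}(V) \subset E$); then ${f}(E) \subset E$, and symmetry with ${f}^{-1}$ gives equality.

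The key is to replace the single vanishing integral by a stronger family of vanishings. Expanding binomially,
\[
0 = \int_V (\eta_+ + \eta_-)^d = \sum_{k=0}^{d} \binom{d}{k} \int_V \eta_+^{k}\, \eta_-^{d-k}.
\]
By Proposition \ref{lem:04}\eqref{enum:lem:04:c} and its proof, $[\eta_+]$ and $[\eta_-]$ are each nef, being limits of K\"ahler classes under $\lambda^{-n}({f}^\ast)^{\pm n}$. Consequently each mixed intersection number $\int_V \eta_+^{k}\, \eta_-^{d-k}$ is nonnegative, and since the sum (with positive binomial weights) vanishes, every term must vanish individually.

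With this in hand, pull back to $V$ using ${f}^\ast[\eta_+] = \lambda[\eta_+]$ and ${f}^\ast[\eta_-] = \lambda^{-1}[\eta_-]$:
\[
\int_{{f}(V)} (\eta_+ + \eta_-)^d = \int_V {f}^\ast(\eta_+ + \eta_-)^d = \int_V (\lambda\eta_+ + \lambda^{-1}\eta_-)^d = \sum_{k=0}^{d} \binom{d}{k} \lambda^{2k-d} \int_V \eta_+^{k}\, \eta_-^{d-k} = 0,
\]
so ${f}(V)$ lies in the null locus, giving ${f}(E) \subset E$. Running the identical argument with ${f}^{-1}$ (whose action on $[\eta_\pm]$ is just the swap of the two scalings) yields ${f}^{-1}(E) \subset E$, hence ${f}(E) = E$. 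No serious obstacle is expected here; the only point to handle cleanly is the nonnegativity of the mixed intersection numbers, which is standard for nef classes on a projective (or K\"ahler) manifold.
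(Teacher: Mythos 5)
Your proof is correct and takes essentially the same route as the paper's: expand $(\eta_++\eta_-)^{\dim V}$ into mixed intersection numbers, use nefness of $[\eta_\pm]$ (the paper cites \cite[Theorem 4.5]{DP04} for the nonnegativity you invoke as standard) to conclude each mixed term vanishes, and then observe these vanishings are preserved under $f$ because $f^\ast[\eta_\pm]=\lambda^{\pm1}[\eta_\pm]$. Your write-up is a touch more explicit—stating the $\int_{f(V)}=\int_V f^\ast$ identity and running the symmetric $f^{-1}$ argument to get equality rather than just $f(E)\subset E$—but the mathematical content is identical.
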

\begin{proof}
  We check that it satisfies ${f}(E)\subset E$. 
Recall that $E$ is defined as the union of subvarieties $V$ in which 
$\int_V(\eta_++\eta_-)^{\dim V}=0$. Because $[\eta_+]$ and $[\eta_-]$ are nef classes, thanks to \cite[Theorem 4.5]{DP04} and approximation of $[\eta_\pm]$ by K\"ahler classes,
we have $\int_V(\eta_+)^a(\eta_-)^b\geq 0$ whenever $a+b=\dim V$. Hence if $\int_V(\eta_++\eta_-)^{\dim V}=0$, we have $\int_V(\eta_+)^a(\eta_-)^b=0$.
Because ${f}^\ast[\eta_\pm]=\lambda^{\pm1}[\eta_\pm]$, this implies 
$\int_{{f}(V)}(\eta_+)^a(\eta_-)^b=0$ as well, now integrating on $f(V)$. 
Collecting them we have $\int_{{f}(V)}(\eta_++\eta_-)^{\dim V}=0$, 
verifying ${f}(V)\subset E$.
\end{proof}

Although the following is an immediate application of \cite[Theorem 1.6]{CT15}, we state this as a lemma, to introduce some notations for later use.
\begin{lemma}
  \label{lem:01}
  There exists a smooth, incomplete Ricci-flat K\"ahler metric $\omega_0$ on $X\setminus E$, and a sequence $\left(\omega_k\right)$ of complete, smooth Ricci-flat \emph{hyperk\"ahler} metrics on $X$ that converges to $\omega_0$ in the following senses: (i) $[\omega_k]\to[\eta_+]+[\eta_-]$ in $H^{1,1}(X,\R)$, and (ii) $\omega_k\to\omega_0$ in $C^\infty_{\mathrm{loc}}(X\setminus E)$.

  Moreover, $\omega_k$'s may be set to have the unit volume. That is, $\omega_k^{2n}=\mathrm{vol}$.
\end{lemma}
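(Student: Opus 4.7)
The plan is to apply \cite[Theorem 1.6]{CT15} directly to the big and nef class $[\eta_+]+[\eta_-]$ (bigness and nefness coming from Proposition \ref{lem:04}\ref{enum:lem:04:c}), with a bookkeeping step for the volume normalization at the end.

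First, I would choose a sequence of K\"ahler classes $[\alpha_k]\in H^{1,1}(X,\R)$ with $[\alpha_k]\to [\eta_+]+[\eta_-]$; this is possible since the target lies in the closure of the K\"ahler cone. By Yau's theorem applied to our hyperk\"ahler (hence Calabi--Yau, via $\Omega^n$) manifold, each class $[\alpha_k]$ contains a unique Ricci-flat K\"ahler representative $\omega_k$; such a metric is automatically hyperk\"ahler since $(I,\omega_k,\Omega)$ determines the quaternionic structure. Now \cite[Theorem 1.6]{CT15} applied to the big and nef class $[\eta_+]+[\eta_-]$ with its null locus $E$ produces a smooth, incomplete Ricci-flat K\"ahler metric $\omega_0$ on $X\setminus E$, as the $C^\infty_{\mathrm{loc}}$-limit of $\omega_k$. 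This yields properties (i) and (ii).

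For the volume normalization, any Ricci-flat K\"ahler metric $\omega_k$ satisfies $\omega_k^{2n}=c_k\cdot\mathrm{vol}$ for a positive constant $c_k$; by the Beauville--Fujiki relation \eqref{eqn:1.2}, we have $c_k=q([\omega_k])^n\binom{2n}{n}$. Rescaling $\omega_k$ by $c_k^{-1/(2n)}$ (equivalently, rescaling $[\alpha_k]$) adjusts it to unit volume, and our normalization \eqref{eqn:eigenclass-normalization} ensures $q([\eta_+]+[\eta_-])^n\binom{2n}{n}=1$, so the scaling factors tend to $1$ and the rescaled classes still converge to $[\eta_+]+[\eta_-]$. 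The only genuine content is the invocation of \cite[Theorem 1.6]{CT15}; I do not foresee any obstacle, as the normalization step is purely bookkeeping via the Beauville--Fujiki relation.
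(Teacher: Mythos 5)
Your proposal is correct and follows essentially the same route as the paper: invoke \cite[Theorem 1.6]{CT15} for $\omega_0$ and the $C^\infty_{\mathrm{loc}}$ convergence, identify the unique Ricci-flat representative in each Kähler class as hyperkähler, and rescale to unit volume via the Beauville--Fujiki relation \eqref{eqn:1.2}, noting compatibility with \eqref{eqn:eigenclass-normalization}. The only cosmetic difference is that the paper reaches the hyperkähler conclusion by combining uniqueness of the hyperkähler metric in a Kähler class with Ricci-flatness, whereas you argue it directly from the quaternionic triple $(I,\omega_k,\Omega)$; both are standard and equivalent.
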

\begin{proof}
  \cite[Theorem 1.6]{CT15} tells the existence of a smooth, incomplete Ricci-flat K\"ahler metric $\omega_0$ on $X\setminus E$ in which, for any sequence $[\alpha_k]\to[\eta_+]+[\eta_-]$ in $H^{1,1}(X)$, and the Ricci-flat metric $\omega_k\in[\alpha_k]$, $\omega_k$'s converge to $\omega_0$ in $C^\infty_{\mathrm{loc}}(X\setminus E)$ topology.

  For hyperk\"ahler $X$, each K\"ahler class $[\alpha_k]$ contains a unique hyperk\"ahler metric in it \cite[Theorem 23.5]{ghj01}. Such a metric is necessarily Ricci-flat \cite[Proposition 4.5]{ghj01}. By the uniqueness of Ricci-flat metric in a K\"ahler class, $\omega_k$ must be hyperk\"ahler as well.

  Because $[\alpha_k]$'s are converging to $[\eta_+]+[\eta_-]$, a big and nef class, 
  the volume $[\alpha_k]^{2n}$ by $\omega_k$, also converges to a 
  positive number. Thus one can normalize $\omega_k$'s so that they 
  have unit volume, i.e., $[\alpha_k]^{2n}=1$ for all $k$. 
  Consequently, as each $\omega_k$ is Ricci-flat, we have 
  $\omega_k^{2n}=(\Omega\wedge\overline{\Omega})^{2n}=\mathrm{vol}$, due to 
  \eqref{eqn:1.2}.
  (Note that this normalization is compatible with \eqref{eqn:eigenclass-normalization}.)
\end{proof}

\subsection{Ergodicity and Lyapunov Exponents}

As a consequence of bigness (Proposition \ref{lem:04}) of $[\eta_+]+[\eta_-]$, together with results of Dinh--Sibony \cite{DS04}\cite{DS10} and De Th\'elin--Dinh \cite{DTD12}, we have the following

\begin{proposition}
  \label{lem:ergodicity}
  \begin{enumerate}[(a)]
    \item There exists closed positive $(1,1)$-currents $S^+$ and $S^-$ with H\"older continuous potentials, that are in classes $[\eta_+]$ and $[\eta_-]$ respectively, and satisfy $f^\ast S^+=\lambda S^+$ and ${f}^\ast S^-=\lambda^{-1}S^-$.
    \item The wedge $(S^+)^n\wedge(S^-)^n$ in the sense of Bedford--Taylor \cite{BT82} is $\binom{2n}{n}^{-1}$ times the unique measure $\mu$ of maximal entropy, which is mixing.
  \end{enumerate}
\end{proposition}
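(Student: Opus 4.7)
The plan is to adapt the classical Dinh--Sibony construction of Green currents for holomorphic automorphisms of positive entropy, specialized to the hyperk\"ahler setting using the eigenclass data from Proposition \ref{lem:04}.

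For part (a), I would start with a reference K\"ahler form $\omega_{\mathrm{ref}}$ and set
\[
\omega_N^+ := \lambda^{-N}({f}^N)^\ast\omega_{\mathrm{ref}},\qquad
\omega_N^- := \lambda^{-N}({f}^{-N})^\ast\omega_{\mathrm{ref}}.
\]
By Proposition \ref{lem:04}(d), the classes $[\omega_N^\pm]$ converge to positive multiples of $[\eta_\pm]$. Using the $\partial\bar\partial$-lemma, write $\omega_N^+ - \eta_+^{\mathrm{sm}} = dd^c\phi_N$ for a smooth representative $\eta_+^{\mathrm{sm}}$ of the limiting class and a quasi-psh potential $\phi_N$. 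The main estimate is that $(\phi_N)$ is Cauchy in a H\"older norm. This should follow from the contraction rate $\lambda^{-N}$ coming from the spectral gap of ${f}^\ast$ on $H^{1,1}(X,\R)$ (the top eigenvalue $\lambda$ is simple and all other eigenvalues have strictly smaller modulus by the loxodromic structure of Lemma \ref{lem:03}), combined with Dinh--Sibony's stability estimates for pull-backs of quasi-psh functions under holomorphic automorphisms with $d_1>1$. The limit $S^+$ then has H\"older continuous local potentials; rescaling places it in $[\eta_+]$, and ${f}^\ast S^+ = \lambda S^+$ follows by passing to the limit in the relation ${f}^\ast\omega_N^+ = \lambda\omega_{N-1}^+$. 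The current $S^-$ is produced analogously using ${f}^{-1}$.

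For part (b), H\"older continuity of the potentials makes the Bedford--Taylor wedge $\mu := (S^+)^n\wedge(S^-)^n$ a well-defined positive measure. Its total mass equals $\int_X \eta_+^n\wedge\eta_-^n$, which is strictly positive: using the Beauville--Fujiki relation together with $q([\eta_+],[\eta_-])>0$ (Proposition \ref{lem:04}(a),(c)), the mixed intersection number is nonzero. After rescaling, $\mu$ is a probability measure, and ${f}^\ast\mu=\mu$ follows immediately from the scaling of $S^\pm$.

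The hard step is identifying $\mu$ as the \emph{unique} measure of maximal entropy. My plan is to invoke the framework of Dinh--Sibony (and refinements by De Th\'elin--Vigny) for K\"ahler automorphisms whose action on $H^{1,1}$ is loxodromic and whose dynamical degrees satisfy $d_k = d_1^k$; Oguiso's theorem above places our hyperk\"ahler $(X,{f})$ squarely in that regime. Those general results then yield both $h_\mu({f}) = nh = h_{\mathrm{top}}({f})$ together with the uniqueness of $\mu$ with this property, and the mixing statement (in fact exponential decay of correlations against H\"older observables). Thus the main obstacle is less about new mathematics than about confirming that the cohomological hypotheses of those general theorems are met in the hyperk\"ahler setting, which reduces to Oguiso's $d_k = d_1^k$ identity and the signature $(1,h^{1,1}-1)$ of $q$ on $H^{1,1}(X,\R)$.
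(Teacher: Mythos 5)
Your proposal follows essentially the same route as the paper: construct Green $(1,1)$-currents $S^{\pm}$ via the Dinh--Sibony theory using the spectral gap on $H^{1,1}$ (the paper cites \cite[Theorem 4.2.1]{DS10} directly, where you re-sketch the Cauchy-in-Hölder-norm argument behind it), verify that $(S^+)^n\wedge(S^-)^n$ has positive mass using the Beauville--Fujiki relation and the normalization of $[\eta_\pm]$, and then invoke the Dinh--Sibony/De Thélin--Dinh machinery to obtain uniqueness of the measure of maximal entropy and mixing. The one place you wave your hands that the paper treats carefully is the hypothesis of \cite[Theorem 1.2]{DTD12}: it is not immediate that $d_n(f)=\lambda^n$ is a simple eigenvalue on all of $\bigoplus_p H^{p,p}(X,\C)$, and the paper resolves this by first using \cite[Proposition 4.4.1]{DS10} (nonvanishing of $T^+\wedge T^-$ forces multiplicity 1 on $H^{n,n}$) and then noting that the action of $f^\ast$ on the other $H^{p,p}$ has strictly smaller spectral radius; you should make this check explicit rather than deferring it to ``confirming the cohomological hypotheses.''
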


The currents $S^\pm$ mentioned above are called \emph{Green currents} of order 1, according to \cite[\S{4.2}]{DS10}. We will call $S^+$ an \emph{unstable Green $(1,1)$-current} and $S^-$ a \emph{stable Green $(1,1)$-current}.

\begin{proof}
  To show (a), first find a positive closed $(1,1)$-current $S_1^+$ satisfying ${f}^\ast S_1^+=\lambda S_1^+$, by \cite[Corollary 3.5]{DS04}. The potential of $S_1^+$ is H\"older continuous and the class $[S_1^+]$ belongs to the closure of the K\"ahler cone; in particular, the class is nef. Because $f^\ast[S_1^+]=\lambda[S_1^+]$, the class $[S_1^+]$ is positively proportional to $[\eta_+]$. So by rescaling $S_1^+$ we have the demanded $S^+$. Same construction applies for $S^-$, by considering ${f}^{-1}$. This shows (a).


  Before showing (b), we study some nature of $T^+=(S^+)^n$ and $T^-=(S^-)^n$. 
  Because potentials of $S^\pm$ are locally bounded, the Bedford--Taylor theory applies and defines $(S^\pm)^n$ as closed positive $(n,n)$-currents. Hence $T^+=(S^+)^n$ and $T^-=(S^-)^n$ are well-defined. Note that each current is in class $[\eta_+]^n$ and $[\eta_-]^n$, respectively.
  
  It turns out that $T^+$ and $T^-$ are Green currents of $f$ and $f^{-1}$ respectively, of order $n$. 
  To see so, let $V_+=\R.[\eta_+]^n\subset H^{n,n}(X,\R)$. Then ${f}^\ast$ acts as a multiplication by $\lambda^n$ on $V_+$, and we have $\lambda^n=d_n(f)>\lambda^{n-1}=d_{n-1}(f)$. Thus the hypotheses of \cite[Theorem 4.3.1]{DS10} are met, and $T^+$ is the unique closed positive $(n,n)$-current in the class $[\eta_+]^n$. We argue likewise with $V_-=\R.[\eta_-]^n\subset H^{n,n}(X,\R)$ and $(f^{-1})^\ast$, to have that $T^-$ is the unique closed positive $(n,n)$-current in the class $[\eta_-]^n$.

  We claim that $T^+\wedge T^-$ is a positive nonzero measure. To have so, note first that $T^+\wedge T^-$ is cohomologous to $\eta_+^n\wedge\eta_-^n$ (as currents), where $\eta_\pm$ are any smooth representative of the classes $[\eta_\pm]$. Then we have
  \begin{align*}
    \int_X T^+\wedge T^- &= \int_X\eta_+^n\wedge\eta_-^n \\
    &= \binom{2n}{n}^{-1}\int_X(\eta_++\eta_-)^{2n} = \binom{2n}{n}^{-1},
  \end{align*}
  by the normalization \eqref{eqn:eigenclass-normalization}. As $T^+\wedge T^-$ is already a positive $(2n,2n)$-current, this suffices to show that $T^+\wedge T^-$ is a positive nonzero measure. A corollary is, by \cite[Proposition 4.4.1]{DS10}, that the eigenvalues $\lambda^{\pm n}$ of $f^\ast$ on $H^{n,n}(X,\C)$ have multiplicities 1.

  Now we are ready to prove (b). 
  We can apply \cite[Theorem 1.2]{DTD12} here. To see so, we remark that the proof of the theorem only requires $f$ to be `simple on $\bigoplus_{p=0}^{2n}H^{p,p}(X,\C)$,' i.e., admits a unique, multiplicity 1 eigenvalue of absolute value $d_n(f)$, at the subring $\bigoplus_{p=0}^{2n}H^{p,p}(X,\C)$ of the cohomology ring. Because $f^\ast$ on other $H^{p,p}$ groups have spectral radius strictly less than $d_n(f)$, we see that the multiplicity of $d_n(f)$ on the subring is precisely that on the group $H^{n,n}(X,\C)$. This was observed to be multiplicity 1 above.

  Thus the theorem applies, and the Green measure $\mu=\binom{2n}{n}T^+\wedge T^-$ is the unique invariant \emph{probability} measure of maximal entropy. By \cite[Theorem 4.4.2]{DS10}, we furthermore know that $\mu$ is mixing.
\end{proof}

We also claim that, if the Green measure $\mu$ is in the volume class, i.e., absolutely continuous with repsect to the volume measure $\mathrm{vol}$ (which comes from the Riemannian volume form $c^{-1}\omega^{2n}$), then the Lyapunov exponents are very simple. Recall the quantity $h=\log d_1({f})=\frac1n h_{\mathrm{top}}({f})$ defined in \eqref{eqn:h-and-lambda}.

\begin{lemma}
  \label{lem:09}
  If the Green measure $\mu=(S^+)^n\wedge(S^-)^n$ is in the volume class, the Lyapunov exponents are $\pm h/2$ with multiplicity $2n$ each.
\end{lemma}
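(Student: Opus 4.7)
The plan is to pin down the complex Lyapunov exponents $\chi_1^\C\geq\cdots\geq\chi_{2n}^\C$ of $(X,{f},\mu)$ by combining three inputs: Pesin's entropy formula, which is available because $\mu$ is in the volume class, fixes their total; the cohomological action $f^\ast$ on $H^{1,1}$ bounds the top exponent from above; and invariance of the holomorphic symplectic form (Lemma \ref{lem:02}) imposes a $\pm$-symmetry. Passage from the complex to the real spectrum doubles every multiplicity, because $D{f}$ is $\C$-linear.

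Since $\mu$ is mixing by Proposition \ref{lem:ergodicity} and, by hypothesis, absolutely continuous with respect to $\mathrm{vol}$, Pesin's formula applies and gives
\[nh=h_\mu({f})=h_{\mathrm{top}}({f})=2\sum_{j\,:\,\chi_j^\C>0}\chi_j^\C,\]
so the positive complex exponents sum to $nh/2$. From Lemma \ref{lem:02}, $D{f}^N$ is $k_{{f}}^N$-symplectic on each tangent space; its eigenvalues pair into products of unit modulus, and taking logarithms yields the symmetry $\chi_j^\C=-\chi_{2n+1-j}^\C$ for $1\leq j\leq 2n$. In particular at most $n$ complex exponents are strictly positive.

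Next I would prove the upper bound $\chi_1^\C\leq\tfrac12\log d_1({f})=h/2$. At any point $x$, $\|D{f}^N_x\|_\omega^2$ is the top eigenvalue, and hence at most the trace, of the Hermitian form $({f}^N)^\ast\omega$ with respect to $\omega$. Integrating this trace against $\omega^{2n}$ turns a pointwise quantity into the cohomological pairing $({f}^\ast)^N[\omega]\cdot[\omega]^{2n-1}$, which by Lemma \ref{lem:03} and the Beauville--Fujiki relation grows like $O(\lambda^N)=O(e^{Nh})$. A Borel--Cantelli argument on the sequence $\lambda^{-N}\|D{f}^N\|^2$, together with $\mu\ll\mathrm{vol}$ and ergodicity of $\mu$, upgrades this $L^1(\mathrm{vol})$-bound to the $\mu$-a.e.\ pointwise inequality $2\chi_1^\C\leq h$. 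Alternatively one may cite the de Th\'elin/Dinh--Sibony inequality relating Lyapunov sums to dynamical degrees for holomorphic dynamics, contained in \cite{DS10}.

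Combining the three inputs, at most $n$ strictly positive complex exponents, each $\leq h/2$, summing to $nh/2$, force exactly $n$ of them to equal $h/2$; the symplectic symmetry then fills the bottom $n$ with $-h/2$, leaving no zero exponents. Doubling for the real spectrum yields the claimed multiplicity $2n$ at each of $\pm h/2$. The delicate step is the upper bound itself---converting the cohomological growth rate $\lambda^N$ into $\mu$-a.e.\ pointwise control on $\|D{f}^N\|$; once that is in place the remaining deduction is a short matching of inequalities.
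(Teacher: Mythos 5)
Your proposal is correct, but it takes a genuinely different route from the paper at the crucial step. Both arguments share the same skeleton: Pesin's entropy formula (equivalently, the Ledrappier--Young criterion for $\mu\ll\mathrm{vol}$) pins down the \emph{sum} of the positive complex exponents, and the symplectic symmetry $\chi_j=-\chi_{2n+1-j}$ (from Lemma \ref{lem:02}) halves the spectrum. Where you diverge is the per-exponent estimate: the paper invokes de Th\'elin's bound \cite[Corollaire 3]{The08}, which gives the \emph{lower} bound $\chi_j\geq\tfrac12\log\bigl(d_n/d_{n-1}\bigr)=h/2$ for $j\leq n$ (using Oguiso's increasing--decreasing sequence of dynamical degrees to evaluate the ratio). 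You instead derive the \emph{upper} bound $\chi_1\leq h/2$ by integrating the pointwise trace estimate $\|Df^N_x\|^2_\omega\leq\mathrm{tr}_\omega\bigl((f^N)^\ast\omega\bigr)$ over the fixed K\"ahler metric, converting it into the cohomological pairing $[\omega]^{2n-1}.(f^\ast)^N[\omega]=O(\lambda^N)$, and then applying Chebyshev plus Borel--Cantelli to upgrade the $L^1(\mathrm{vol})$ growth control to a $\mathrm{vol}$-a.e.\ (hence $\mu$-a.e.) pointwise bound. Matching a sum of $nh/2$ across at most $n$ positive terms against either a uniform lower bound $h/2$ or a uniform upper bound $h/2$ forces equality equally well; your route also rules out zero exponents for free since fewer than $n$ terms each $\leq h/2$ could not reach the required total.

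What each approach buys: the paper's appeal to de Th\'elin is a one-line citation, but to a deep theorem. Your Gromov-style integration argument is more self-contained and elementary (and, incidentally, is the same device the paper itself deploys in Section~\ref{sec:relation-with-lyapunov-exponents} to relate log-singular values to exponents, so it fits the surrounding toolkit naturally). The price is that the Borel--Cantelli step, which you flag as the delicate point, does need to be written out: one should note that the integral estimate is against $\mathrm{vol}$, take $\delta\in(0,1)$, apply Chebyshev to get $\mathrm{vol}\{\,\|Df^N\|^2>C\lambda^N\delta^{-N}\}\leq\delta^N$, sum, and let $\delta\uparrow1$. One should also record that the leading coefficient $[\omega]^{2n-1}.[\eta_+]$ in the cohomological estimate is strictly positive, which in the hyperk\"ahler setting follows from the Beauville--Fujiki relation together with $q([\omega],[\eta_+])>0$. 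A small caveat on your closing sentence: the de Th\'elin/Dinh--Sibony inequality you mention as an alternative citation gives the \emph{lower} bound on exponents (the paper's route), not the upper bound you are proving there, so that fallback would revert to the paper's argument rather than serve as a substitute for yours.
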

\begin{proof}
  Let $\chi_1\geq\cdots\geq\chi_{2n}$ be the Lyapunov exponents of $\mu$, listed with multiplicities, for the cocyle $Df$ on the complexified tangent bundle $T_\C X$ (cf. \cite[Theorem 2.2.6]{Fil17}\cite[Theorem 1.6]{ruelle1979}). By ergodicity of $\mu$, they are $\mu$-a.e. constant. 

  Thanks to Oguiso \cite[Theorem 1.1]{oguiso2009}, we have an increasing-decreasing relation
  \[1=d_0({f})<d_1({f})<\cdots<d_{n-1}({f})<d_n({f})>d_{n+1}({f})>\cdots>d_{2n}({f})=1\]
  of dynamical degrees. The bounds of Lyapunov exponents by de Th\'elin \cite[Corollaire 3]{The08} then yields,
  \begin{equation}\label{eqn:lem:09-2}\chi_1\geq\cdots\geq\chi_{n}\geq\frac{1}{2}\log\frac{d_n({f})}{d_{n-1}({f})}=\frac{h}{2}(>0),\end{equation}
  and
  \begin{equation}\label{eqn:lem:09-3}\chi_{2n}\leq\cdots\leq\chi_{n+1}\leq\frac12\log\frac{d_{n+1}(f)}{d_n(f)}=-\frac{h}{2}(<0).\end{equation}
  In particular, the first $n$ exponents are nonnegative and the last $n$ exponents are nonpositive.

  Because the measure $\mu$ is in the volume class, by Ledrappier--Young formula \cite[Corollary G]{LY85II}, the entropy $h_\mu(f)$ equals
  \begin{equation}\label{eqn:lem:09-1}h_\mu({f})=nh=2\cdot(\chi_1+\cdots+\chi_{n}).\end{equation}
  Here, $h_\mu(f)=nh$ follows from $h_\mu(f)=h_{\mathrm{top}}(f)=nh$.   
  Combining \eqref{eqn:lem:09-1} and \eqref{eqn:lem:09-2}, we have $\chi_1=\cdots=\chi_n=h/2$. 
  
  Likewise, focusing on $h_\mu(f^{-1})$, the Ledrappier--Young formula yields
  \begin{equation}\label{eqn:lem:09-4}h_\mu({f}^{-1})=nh=2\cdot(-\chi_{n+1}-\cdots-\chi_{2n}).\end{equation}
  (To see why $-\chi_{n+1},\ldots,-\chi_{2n}$ are suitable for $f^{-1}$, see \cite[\S{2.2.10}]{Fil17}.) Therefore, combining \eqref{eqn:lem:09-3} and \eqref{eqn:lem:09-4}, we have $\chi_{n+1}=\cdots=\chi_{2n}=-h/2$.
\end{proof}

\section{Analysis on Approximate Metrics}
\label{sec:analysis-on-approximate-metrics}

In this section, we aim to study how the limit metric $\omega_0$ behaves under the dynamics. To elaborate, we show the following (Corollary \ref{lem:14}). For each $x\in X\setminus E$, we have $n$-dimensional complex subspaces $E^\pm_{N,x}\subset T^{1,0}_xX$ (where $T^{1,0}X$ is the holomorphic tangent space) so that, applying $(f^N)^\ast$ to $\omega_{0,x}|E^\pm_{N,x}$, we rescale it by $\lambda^N=e^{Nh}$.

\subsection{Local Setups}
\label{subsec:local-setups}

As claimed in Lemma \ref{lem:01}, 
we pick up hyperk\"ahler metrics $\omega_k$ that converge to $\omega_0$ in 
$C^\infty_{\mathrm{loc}}(X\setminus E)$ topology.

Fix $N\in\Z_{>0}$. Along the dynamics, we define the quantitites 
$\left(\sigma_i^{(k)}(x,Nh)\right)_{i=1}^{2n}$, to be called 
\emph{log-singular values of $({f}^N)^\ast\omega_k$ relative to $\omega_k$}, 
as follows. At a point $x\in X$, one can write 
\begin{equation}
  \label{eqn:3.2} 
  \omega_k=\frac{\sqrt{-1}}{2}\sum_{i=1}^{2n}dz_i\wedge d\overline{z}_i,
\end{equation}
with an appropriate holomorphic coordinate $(z_1,\cdots,z_{2n})$ of $X$ at $x$. 
Moreover, for
\begin{equation}
  \label{eqn:3.3}
  h_{ij}^{(k)}=({f}^N)^\ast\omega_k(\frac{\partial}{\partial z_i},\frac{\partial}{\partial z_j}),
\end{equation}
we have a self-adjoint matrix $\left(h_{ij}^{(k)}\right)$. 
Thus one can adjust the holomorphic basis vectors
$\dfrac{\partial}{\partial z_1},\cdots,\dfrac{\partial}{\partial z_{2n}}$ 
unitarily, to keep the form \eqref{eqn:3.2} of $\omega_k$ yet make the matrix
$\left(h_{ij}^{(k)}\right)$ diagonal. 
This enables us to write $({f}^N)^\ast\omega_k$ at $x\in X$ as
\begin{equation}
  \label{eqn:3.4} 
  ({f}^N)^\ast\omega_k=\frac{\sqrt{-1}}{2}\sum_{i=1}^{2n}\exp\left(\sigma_i^{(k)}(x,Nh)\right)\, dz_i\wedge d\overline{z}_i.
\end{equation}
We moreover choose the base index $i$ so that the numbers 
$\sigma_1^{(k)},\cdots,\sigma_{2n}^{(k)}$ are in the decreasing order, i.e.,
\[
  \sigma_1^{(k)}(x,Nh)\geq\sigma_2^{(k)}(x,Nh)\geq\cdots\geq\sigma_{2n}^{(k)}(x,Nh).
\]
These quantities exhibit the following symmetry.
The assumption that $\omega_k$ is hyperk\"ahler is crucial here.

\begin{lemma}
  \label{lem:05}
  Log-singular values $\sigma_i^{(k)}$'s exhibit symmetry at 0. 
  That is, for all $x$ and $i=1,2,\cdots,n$,
  \[\sigma_i^{(k)}(x,Nh)+\sigma_{2n+1-i}^{(k)}(x,Nh)=0.\]
\end{lemma}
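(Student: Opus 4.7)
The plan is to identify $\exp\bigl(\tfrac12\sigma_i^{(k)}(x,Nh)\bigr)$ for $i=1,\ldots,2n$ with the singular values of the differential $Df^N\colon T_x^{1,0}X\to T_{f^N(x)}^{1,0}X$ in well-chosen orthonormal bases at both endpoints, and then to deduce the reciprocal-pair symmetry from the near-invariance of the holomorphic symplectic form $\Omega$. Because $\omega_k$ is \emph{hyperk\"ahler}, at each point $x$ one can find a holomorphic coordinate system in which $\omega_k$ has the standard form \eqref{eqn:3.2} while, simultaneously,
\[\Omega|_x=c_\Omega\sum_{j=1}^n dz_{2j-1}\wedge dz_{2j},\]
for a universal nonzero constant $c_\Omega$ fixed by the normalization $\omega_k^{2n}=(\Omega\wedge\overline{\Omega})^n$ from Lemma~\ref{lem:01}. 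This simultaneous normalization is the pointwise content of the hyperk\"ahler hypothesis (equivalently, reduction of the holonomy to $\mathrm{Sp}(n)$). Choose such coordinates at both $x$ and $f^N(x)$, and let $G$ be the matrix of $Df^N$ in these bases; a direct reading of \eqref{eqn:3.3}--\eqref{eqn:3.4} shows that $G^\ast G$ represents $({f}^N)^\ast\omega_k$ relative to $\omega_k$, so $\exp\bigl(\tfrac12\sigma_i^{(k)}(x,Nh)\bigr)$ are exactly the singular values $s_1(G)\geq\cdots\geq s_{2n}(G)$ of $G$, listed in decreasing order.

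Next, iterating Lemma~\ref{lem:02} yields $({f}^N)^\ast\Omega=k_{f}^N\Omega$ with $|k_{f}^N|=1$, which, after the universal constant $c_\Omega$ cancels, translates into the matrix identity
\[G^T J\,G=k_{f}^N\,J,\]
where $J$ is the standard $2n\times 2n$ symplectic matrix on $\C^{2n}$. Since $J^2=-I$ and $J^T=-J$, in particular $J$ is orthogonal, and the identity rearranges to $G^{-1}=-k_{f}^{-N}\,J\,G^T\,J$. Because $|k_{f}^{-N}|=1$ and $J$ is orthogonal, the right-hand side has the same singular values as $G^T$, i.e., as $G$ itself. On the other hand, the singular values of $G^{-1}$ in decreasing order are always the reciprocals of those of $G$ taken in increasing order: $s_i(G^{-1})=1/s_{2n+1-i}(G)$. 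Matching these two multisets (both already listed in decreasing order) gives $s_i(G)=1/s_{2n+1-i}(G)$ for every $i$; taking $2\log$ yields the claimed symmetry $\sigma_i^{(k)}(x,Nh)+\sigma_{2n+1-i}^{(k)}(x,Nh)=0$.

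The essential, and only, place where the hyperk\"ahler hypothesis is used in an essential way is the pointwise simultaneous normalization of $\omega_k$ and $\Omega$ in the first step; the remainder is routine linear algebra over $\C$. For a general Ricci-flat K\"ahler metric on a Calabi--Yau manifold one still has a parallel holomorphic top form, but no pointwise compatibility with the Hermitian structure of the kind needed here, so $Df^N$ is not conformally symplectic relative to an $\omega_k$-orthonormal basis and the symmetry breaks down. This is the one spot where the hyperk\"ahler structure of $X$ really pays off.
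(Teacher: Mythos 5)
Your proof is correct and follows essentially the same route as the paper: both normalize $\omega_k$ and $\Omega$ simultaneously at $x$ and $f^N(x)$ using the hyperk\"ahler (i.e.\ $\mathrm{Sp}(n)$-holonomy) structure, deduce that the matrix of $D_xf^N$ is conformally symplectic from $f^\ast\Omega=k_f\Omega$, and extract the reciprocal symmetry of singular values from that symplectic constraint. The only difference is cosmetic: you prove the multiplicative symmetry of the singular values directly via the identity $G^{-1}=-k_f^{-N}JG^\top J$, whereas the paper observes that $AA^\dagger$ is a positive-definite self-adjoint symplectic matrix and cites this fact as an exercise.
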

\begin{proof}
  Replacing ${f}$ to ${f}^N$ if necessary, we may assume that $N=1$. 
  Also, for notational simplicity, denote $\omega:=\omega_k$. We denote the transpose of $A=(a_{ij})$ as $A^\top=(a_{ji})$ and the conjugate transpose as $A^\dagger=(\overline{a_{ji}})$.
  
  As $\omega$ is a hyperk\"ahler metric, for each point $x\in X$ one has a 
  holomorphic coordinate $(z_1,\cdots,z_{2n})$ that enables representations (validity guaranteed only at $x$)
  \begin{align*}
    \omega &= \sum_{i=1}^{2n}dz_i\wedge d\overline{z}_i, \\
    \Omega &= \sum_{\mu=1}^n dz_\mu\wedge dz_{n+\mu}.
  \end{align*}
  
  Denote $(w_1,\cdots,w_{2n})$ for another holomorphic coordinate near ${f}(x)$ 
  with analogous expressions of $\omega$ and $\Omega$ at ${f}(x)$. 
  By this coordinate, describe the map $D_xf\colon T_xX\to T_{{f}(x)}X$ as a matrix 
  $A=\left(a_{ij}\right)$, where $a_{ij}$'s are determined by the relation
  \[D_x{f}\left(\frac{\partial}{\partial z_i}\right)=\sum_{j=1}^{2n}a_{ij}\frac{\partial}{\partial w_j}.\]

  This $A$ satisfies $AA^\dagger\in\mathrm{Sp}(2n,\C)$. 
  Indeed by Lemma \ref{lem:02}, ${f}^\ast\Omega=k_{f}\Omega$, implies
  \[\sum_{k,\ell=1}^{2n}a_{ik}\Omega_{k\ell}a_{j\ell}=k_{f}\Omega_{ij}\]
  (where $\Omega_{ij}=\Omega\left(\dfrac{\partial}{\partial z_i},\dfrac{\partial}{\partial z_j}\right)$;
  in other words, 
  $(\Omega_{ij})=\begin{bmatrix} 0 & I_n \\ -I_n & 0 \end{bmatrix}$). This yields $A\Omega A^\top=k_{f}\Omega$. 
  Thus $k_{f}^{-1/2}A^\top\in\mathrm{Sp}(2n,\C)$, 
  which implies $AA^\dagger\in\mathrm{Sp}(2n,\C)$.

  We describe how ${f}^\ast\omega$ is represented at $x$. Since
  \begin{align*}
    {f}^\ast\omega\left(\frac{\partial}{\partial z_i},\frac{\partial}{\partial z_j}\right) &= 
    \sum_{k,\ell=1}^{2n}\omega\left(a_{ik}\frac{\partial}{\partial w_k},a_{j\ell}\frac{\partial}{\partial w_\ell}\right) \\
    &= \sum_{k,\ell=1}^{2n}a_{ik}\overline{a_{j\ell}}\omega\left(\frac{\partial}{\partial w_k},\frac{\partial}{\partial w_\ell}\right) \\
    &= \sum_{k=1}^{2n}a_{ik}\overline{a_{jk}} = (AA^\dagger)_{ij},
  \end{align*}
  we have
  ${f}^\ast\omega=\frac{\sqrt{-1}}{2}\sum_{i,j=1}^{2n}(AA^\dagger)_{ij}\, dz_i\wedge d\overline{z}_j$
  at $x$. Thus the numbers $\sigma_i^{(k)}$ are log of eigenvalues of $AA^\dagger$. 
  A fact in symplectic matrices \cite[Problem 22]{dGM2011} yields that 
  a self-adjoint positive-definite symplectic matrix like $AA^\dagger$ 
  has eigenvalues that are (multiplicatively) symmetric at 1.
  Taking log to that symmetry relation, 
  we get our desired symmetry of $\sigma_1^{(k)},\cdots,\sigma^{(k)}_{2n}$.
\end{proof}

\subsection{Local computations}

Based on the setups established in the previous section \ref{subsec:local-setups}, we now respectively compute the forms $\omega_k^{2n}$ and $\omega_k^{2n-1}\wedge({f}^N)^\ast\omega_k$ at $x$, and compare them to establish a
\begin{proposition}
  \label{lem:06}
  As differential forms,
  \[\omega_k^{2n-1}\wedge({f}^N)^\ast\omega_k=\left[\frac{1}{n}\sum_{j=1}^n\cosh\left(\sigma_j^{(k)}(x,Nh)\right)\right]\omega_k^{2n}.\]
\end{proposition}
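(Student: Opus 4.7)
The proof will be a direct local computation at the point $x\in X$, carried out entirely in the holomorphic coordinates $(z_1,\dots,z_{2n})$ introduced in \S\ref{subsec:local-setups}, in which both $\omega_k$ and $(f^N)^*\omega_k$ are simultaneously diagonal. Both sides of the asserted identity are $(2n,2n)$-forms at $x$, so it suffices to show they have the same coefficient relative to the volume element $dz_1\wedge d\overline{z}_1\wedge\cdots\wedge dz_{2n}\wedge d\overline{z}_{2n}$.

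First I would expand $\omega_k^{2n}$ using \eqref{eqn:3.2}. The only monomials in the multinomial expansion that survive the wedge are permutations of $dz_1\wedge d\overline z_1\wedge\cdots\wedge dz_{2n}\wedge d\overline z_{2n}$ (once we fix an ordering by commutation rules), and there are $(2n)!$ of them; this gives \eqref{eqn:3.6}. For $\omega_k^{2n-1}\wedge(f^N)^*\omega_k$, I would do the analogous expansion using \eqref{eqn:3.2} and \eqref{eqn:3.4}. Picking the index $j\in\{1,\ldots,2n\}$ from the $(f^N)^*\omega_k$ factor, the remaining $\omega_k^{2n-1}$ must supply exactly the $2n-1$ indices different from $j$ (else the wedge vanishes by repetition), giving a combinatorial weight of $(2n-1)!$ per choice of $j$. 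Summing over $j$ produces \eqref{eqn:3.7}, and dividing out via \eqref{eqn:3.6} gives the intermediate identity \eqref{eqn:3.8}, namely
\[\omega_k^{2n-1}\wedge(f^N)^*\omega_k=\frac{1}{2n}\left[\sum_{j=1}^{2n}e^{\sigma_j^{(k)}(x,Nh)}\right]\omega_k^{2n}.\]

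Next I would invoke Lemma \ref{lem:05} to fold the $2n$-term sum into an $n$-term sum of hyperbolic cosines. Pairing the index $j$ with $2n+1-j$ for $j=1,\dots,n$ and using the symmetry $\sigma_j^{(k)}+\sigma_{2n+1-j}^{(k)}=0$, each pair contributes
\[e^{\sigma_j^{(k)}}+e^{\sigma_{2n+1-j}^{(k)}}=e^{\sigma_j^{(k)}}+e^{-\sigma_j^{(k)}}=2\cosh(\sigma_j^{(k)}).\]
Substituting gives $\frac{1}{2n}\sum_{j=1}^{2n}e^{\sigma_j^{(k)}}=\frac{1}{n}\sum_{j=1}^n\cosh(\sigma_j^{(k)})$, which is precisely the claimed coefficient. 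Since the identity holds as differential forms at the arbitrary point $x$, it holds pointwise on $X$.

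This is essentially a bookkeeping argument and I anticipate no real obstacle; the nontrivial ingredient — the hyperk\"ahler symmetry of the log-singular values — has already been isolated in Lemma \ref{lem:05}. The only care required is keeping track of the combinatorial factors $(2n)!$ and $(2n-1)!$ in the two multinomial expansions, and noting that the diagonalization of both $\omega_k$ and $(f^N)^*\omega_k$ in a common unitary frame is what makes the off-diagonal cross-terms disappear in the expansion of $\omega_k^{2n-1}\wedge(f^N)^*\omega_k$.
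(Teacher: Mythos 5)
Your proof is correct and follows essentially the same route as the paper: the multinomial expansion of $\omega_k^{2n}$ and $\omega_k^{2n-1}\wedge(f^N)^*\omega_k$ in the simultaneously diagonalizing coordinates (yielding the paper's \eqref{eqn:3.6}--\eqref{eqn:3.8}), followed by the Lemma~\ref{lem:05} pairing to fold the $2n$ exponentials into $n$ hyperbolic cosines. Your attention to the $(2n)!$ versus $(2n-1)!$ bookkeeping and to why cross-terms vanish is exactly the care the paper's inline derivation relies on.
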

\begin{proof} We use the coordinates as in \eqref{eqn:3.2} and \eqref{eqn:3.4}. Compute
\begin{align} 
  \omega_k^{2n}&=\left(\frac{\sqrt{-1}}{2}\right)^{2n}(2n)!(dz_1\wedge d\overline{z}_1\wedge\cdots\wedge dz_{2n}\wedge d\overline{z}_{2n}),\label{eqn:3.6} \\
  \omega_k^{2n-1}\wedge({f}^N)^\ast\omega_k &= \left(\frac{\sqrt{-1}}{2}\right)^{2n}\left(\sum_{i=1}^{2n}dz_i\wedge d\overline{z}_i\right)^{2n-1}\wedge\left(\sum_{j=1}^{2n}e^{\sigma_j^{(k)}(x,Nh)}dz_j\wedge d\overline{z}_j\right) \nonumber \\
  &= \left(\frac{\sqrt{-1}}{2}\right)^{2n}\left[\sum_{j=1}^{2n}e^{\sigma_j^{(k)}(x,Nh)}\right](2n-1)!(dz_1\wedge d\overline{z}_1\wedge\cdots\wedge dz_{2n}\wedge d\overline{z}_{2n}). \label{eqn:3.7} 
  \intertext{By \eqref{eqn:3.6},}
  &= \left[\sum_{j=1}^{2n}e^{\sigma_j^{(k)}(x,Nh)}\right]\frac{(2n-1)!}{(2n)!}\omega_k^{2n}, \label{eqn:3.8} 
  \intertext{and by Lemma \ref{lem:05}, we can `fold' the sum of 
  exponentials as}
  &=\left[\sum_{j=1}^n2\cosh(\sigma_j^{(k)}(x,Nh))\right]\frac{\omega_k^{2n}}{2n}, \nonumber
\end{align}
and the proposition follows.
\end{proof}

\subsection{Cohomological Analysis}

The form $\omega_k^{2n-1}\wedge({f}^N)^\ast\omega_k$ can also be understood 
cohomologically, but with some approximations. Consider the following integral, 
represented as a cup product of cohomology classes:
\[\int_X\omega_k^{2n-1}\wedge({f}^N)^\ast\omega_k = [\omega_k]^{2n-1}.({f}^N)^\ast[\omega_k].\]
Denote $[\omega_0]:=[\eta_+]+[\eta_-]$. 
Then, by Lemma \ref{lem:01}, 
we have that $[\omega_k]\to[\omega_0]$ as $k\to\infty$. 
This leads us to consider the product $[\omega_0]^{2n-1}.({f}^N)^\ast[\omega_0]$, 
which is evaluated in the following
\begin{proposition}
  \label{lem:07}
  We have the following equation in cohomology:
  \[[\omega_0]^{2n-1}.({f}^N)^\ast[\omega_0]=\cosh(Nh)\cdot[\omega_0]^{2n}.\]
\end{proposition}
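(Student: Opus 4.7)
The plan is to reduce the stated cohomological identity to a polynomial identity in the Beauville--Bogomolov--Fujiki form $q$, by combining the eigenclass decomposition of $(f^N)^\ast[\omega_0]$ with a polarization of the Beauville--Fujiki relation.

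First, since $f^\ast[\eta_\pm] = \lambda^{\pm 1}[\eta_\pm]$ with $\lambda = e^h$ (Section \ref{subsec:eigenclass}), we immediately have
\[(f^N)^\ast[\omega_0] = e^{Nh}[\eta_+] + e^{-Nh}[\eta_-],\]
so the statement becomes an identity about the cup product $[\omega_0]^{2n-1} . \bigl(e^{Nh}[\eta_+] + e^{-Nh}[\eta_-]\bigr)$.

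Second, I would polarize the Beauville--Fujiki relation $\int_X \alpha^{2n} = \binom{2n}{n}\, q(\alpha)^n$, which holds for every $\alpha \in H^2(X,\R)$ by \cite[Proposition 23.14]{ghj01}. Replacing $\alpha$ by $\alpha + t\beta$ and reading off the $t^1$ coefficient on both sides yields the linear-in-$\beta$ identity
\[\int_X \alpha^{2n-1}\, \beta = \binom{2n}{n}\, q(\alpha)^{n-1}\, q(\alpha, \beta).\]
Applying this with $\alpha = [\omega_0]$ and $\beta = (f^N)^\ast[\omega_0]$ reduces the problem to evaluating $q([\omega_0])$ and $q\bigl([\omega_0], (f^N)^\ast[\omega_0]\bigr)$.

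Third, the isotropy $q([\eta_\pm]) = 0$ from Proposition \ref{lem:04}(a) trivializes both computations, since only the cross pairing $q([\eta_+],[\eta_-])$ survives. One obtains
\[q([\omega_0]) = 2\, q([\eta_+],[\eta_-]), \qquad q\bigl([\omega_0], (f^N)^\ast[\omega_0]\bigr) = (e^{Nh} + e^{-Nh})\, q([\eta_+],[\eta_-]) = \cosh(Nh)\cdot q([\omega_0]).\]
Plugging these back into the polarized identity,
\[[\omega_0]^{2n-1}.(f^N)^\ast[\omega_0] = \binom{2n}{n}\, q([\omega_0])^{n-1} \cdot \cosh(Nh) \cdot q([\omega_0]) = \cosh(Nh)\, \binom{2n}{n}\, q([\omega_0])^n = \cosh(Nh)\cdot [\omega_0]^{2n}.\]

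I do not foresee any substantive obstacle here; the argument is a direct algebraic identity whose only non-trivial input is the polarized Fujiki relation. What is worth emphasizing is that the proof is purely cohomological — it uses none of the analytic information about the approximating metrics $\omega_k$ or about the incompleteness of $\omega_0$ — and rests entirely on Proposition \ref{lem:04} together with the BBF algebra, which is precisely why the result is clean enough to be compared term-by-term with the analytic computation of Proposition \ref{lem:06} in the next step of the paper.
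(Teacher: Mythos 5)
Your proof is correct, but it takes a genuinely different route from the paper's. The paper proceeds purely combinatorially: it expands $([\eta_+]+[\eta_-])^{2n}$ and $([\eta_+]+[\eta_-])^{2n-1}.(e^{Nh}[\eta_+]+e^{-Nh}[\eta_-])$ via the binomial theorem, uses the \emph{nilpotence} $[\eta_\pm]^{n+1}=0$ from Proposition~\ref{lem:04}(\ref{enum:lem:04:b}) to annihilate all but the middle terms, and finishes with the identity $\binom{2n-1}{n-1}=\binom{2n-1}{n}=\tfrac12\binom{2n}{n}$. You instead linearize the Beauville--Fujiki relation $\int_X\alpha^{2n}=\binom{2n}{n}\,q(\alpha)^n$ to obtain $\int_X\alpha^{2n-1}\beta=\binom{2n}{n}\,q(\alpha)^{n-1}q(\alpha,\beta)$, and then the only input you need from the eigenclass structure is the \emph{isotropy} $q([\eta_\pm])=0$ from Proposition~\ref{lem:04}(\ref{enum:lem:04:a}). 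Note that both classes $[\omega_0]^{2n-1}.(f^N)^\ast[\omega_0]$ and $[\omega_0]^{2n}$ live in $H^{4n}(X,\R)\cong\R$, so the cohomological identity really is equivalent to the scalar identity you establish; the paper makes the same identification when it passes between cup products and integrals. Each approach buys something: yours requires only isotropy rather than nilpotence (the latter rests on Verbitsky's $\mathfrak{sl}_2$ theorem), avoids bookkeeping of binomial coefficients, and makes transparent that $\cosh(Nh)$ arises simply from the ratio $q([\omega_0],(f^N)^\ast[\omega_0])/q([\omega_0])$; the paper's computation has the virtue of being entirely self-contained once the nilpotence is recorded, and exposes the intermediate class $\binom{2n}{n}[\eta_+]^n[\eta_-]^n$, which is reused in the proof of Proposition~\ref{lem:ergodicity}.
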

\begin{proof}
  The proof is a manual computation with 
  $[\omega_0]=[\eta_+]+[\eta_-]$ and Proposition \ref{lem:04}. Start with
\begin{align}
  [\omega_0]^{2n} &= ([\eta_+]+[\eta_-])^{2n} = \sum_{k=0}^{2n}\binom{2n}{k}[\eta_+]^k[\eta_-]^{2n-k} \nonumber \\
  &= \binom{2n}{n}[\eta_+]^n[\eta_-]^n, \label{eqn:4.1} \\
  [\omega_0]^{2n-1}.({f}^N)^\ast[\omega_0] &= ([\eta_+]+[\eta_-])^{2n-1}.(e^{Nh}[\eta_+]+e^{-Nh}[\eta_-]) \nonumber \\
  &=\left[\sum_{k=0}^{2n-1}\binom{2n}{k}[\eta_+]^k[\eta_-]^{2n-1-k}\right].(e^{Nh}[\eta_+]+e^{-Nh}[\eta_-]) \nonumber \\
  &=\left[\binom{2n-1}{n-1}[\eta_+]^{n-1}[\eta_-]^n+\binom{2n-1}{n}[\eta_+]^n[\eta_-]^{n-1}\right] \nonumber\\
  &\hspace{.5\textwidth}.(e^{Nh}[\eta_+]+e^{-Nh}[\eta_-]) \nonumber \\
  &=\binom{2n-1}{n-1}e^{Nh}[\eta_+]^n[\eta_-]^n+\binom{2n-1}{n}e^{-Nh}[\eta_+]^n[\eta_-]^n, \label{eqn:4.2} 
  \intertext{and because
  $\binom{2n-1}{n-1}=\binom{2n-1}{n}=\frac{1}{2}\binom{2n}{n}$, we have}
  &=\frac{1}{2}\binom{2n}{n}(e^{Nh}+e^{-Nh})[\eta_+]^n[\eta_-]^n. \label{eqn:4.3} 
  \intertext{Equation \eqref{eqn:4.1} then applies to give}
  &=\frac{1}{2}(e^{Nh}+e^{-Nh})[\omega_0]^{2n}=\cosh(Nh)[\omega_0]^{2n}. \label{eqn:4.4}
\end{align}
\end{proof}

We discuss what does this give for $[\omega_k]$'s. Because $[\omega_k]$'s converge to $[\omega_0]$ in cohomology, we see that $[\omega_k]^{2n}$ and $[\omega_k]^{2n-1}.({f}^N)^\ast[\omega_k]$ respectively converge to $[\omega_0]^{2n}$ and $[\omega_0]^{2n-1}.({f}^N)^\ast[\omega_0]$.
That is, we have the following limit:
\begin{align*}
  \lim_{k\to\infty}[\omega_k]^{2n-1}.({f}^N)^\ast[\omega_k] - \cosh(Nh)[\omega_k]^{2n} &= [\omega_0]^{2n-1}.({f}^N)^\ast[\omega_0]-\cosh(Nh)[\omega_0]^{2n} \\
  &=0,
\end{align*}
where the last zero is a consequence of Proposition \ref{lem:07}. If this is rewritten in the integral form, and combined 
with the local computation in Proposition \ref{lem:06}, we have
\begin{equation}
  \label{eqn:4.5}
  \int_X\left[\frac{1}{n}\sum_{j=1}^n\cosh\left(\sigma_j^{(k)}(x,Nh)\right)\right]\omega_k^{2n} - \cosh(Nh)\int_X\omega_k^{2n} \xrightarrow{k\to\infty} 0.
\end{equation}
Lemma \ref{lem:01} yields $\omega_k^{2n}=\mathrm{vol}$. 
Thus \eqref{eqn:4.5} is equivalently written as
\begin{equation}
  \label{eqn:4.6}
  \int_X\left[\frac{1}{n}\sum_{j=1}^n\cosh\left(\sigma_j^{(k)}(x,Nh)\right)\right]d\mathrm{vol}(x) \xrightarrow{k\to\infty}\cosh(Nh).
\end{equation}

\subsection{Relation with Lyapunov Exponents}
\label{sec:relation-with-lyapunov-exponents}

Although log-singular values $\sigma^{(k)}_i(x,Nh)$ come from analytic interests on forms $(f^N)^\ast\omega_k$, these quantities are closely related with Lyapunov exponents $\chi_1\geq\cdots\geq\chi_{2n}$ (see Lemma \ref{lem:09}). In particular, we have the following

\begin{proposition}
  \label{lem:08}
  The log-singular values $\sigma_i^{(k)}(x,Nh)$'s in \eqref{eqn:3.4} satisfy
  \begin{equation}
    \label{eqn:5.1}
  \frac{1}{N}\sum_{j=1}^n\int_X\sigma_j^{(k)}(x,Nh)\,d\mathrm{vol}(x)\geq 2\sum_{i=1}^n\chi_i=nh.
  \end{equation}
  Consequently, we have
  \begin{equation}
    \label{eqn:5.2} 
    \int_X\left[\frac{1}{n}\sum_{j=1}^n\sigma_j^{(k)}(x,Nh)\right]\,d\mathrm{vol}(x)\geq Nh.
  \end{equation}
\end{proposition}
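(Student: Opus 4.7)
The plan is to identify $\sum_{j=1}^n \sigma_j^{(k)}(x,Nh)$ with a subadditive cocycle under $f$, apply Fekete's lemma for a finite-$N$ lower bound, and then evaluate the asymptotic limit via Kingman's subadditive ergodic theorem together with Lemma~\ref{lem:09}. Starting from the matrix representation in the proof of Lemma~\ref{lem:05}, the values $e^{\sigma_i^{(k)}(x,Nh)}$ are the eigenvalues of $AA^\dagger$, where $A$ is the matrix of $Df^N_x\colon T^{1,0}_xX \to T^{1,0}_{f^N(x)}X$ in orthonormal frames for $\omega_k$. These eigenvalues are the squares of the singular values of $Df^N_x$, so the top-$n$ sum gives the pointwise identity
\[
\sum_{j=1}^n \sigma_j^{(k)}(x,Nh) \;=\; 2\log \bigl\| \Lambda^n Df^N_x \bigr\|_{\omega_k,\mathrm{op}}.
\]

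Set $\phi_N(x) := \log\|\Lambda^n Df^N_x\|_{\omega_k,\mathrm{op}}$. Submultiplicativity of the operator norm under composition gives $\phi_{M+N}(x) \leq \phi_M(x) + \phi_N(f^M x)$. Lemma~\ref{lem:02} implies $f^\ast\mathrm{vol} = \mathrm{vol}$ (since $|k_{f}|=1$), so the sequence $a_N := \int_X \phi_N\, d\mathrm{vol}$ is subadditive, and Fekete's lemma yields $a_N/N \geq \lim_{M\to\infty} a_M/M$. Kingman's subadditive ergodic theorem, applied to the $f$-invariant probability measure $\mathrm{vol}$, states that $\tilde\chi(x) := \lim_{N\to\infty} \tfrac{1}{N}\phi_N(x)$ exists $\mathrm{vol}$-a.e.\ and equals the top Lyapunov exponent of the exterior-power cocycle $\Lambda^n Df$, i.e.\ the sum $\chi_1(x)+\cdots+\chi_n(x)$ of the top $n$ Lyapunov exponents of $Df$ on $T^{1,0}X$; moreover $\int_X \tilde\chi\, d\mathrm{vol} = \lim_{M\to\infty} a_M/M$.

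Working under the reduction $\mu = \mathrm{vol}$ from the introduction, $\mathrm{vol}$ is ergodic (since $\mu$ is mixing by Proposition~\ref{lem:ergodicity}), so $\tilde\chi$ is $\mathrm{vol}$-a.e.\ equal to the constant $\chi_1+\cdots+\chi_n = nh/2$ guaranteed by Lemma~\ref{lem:09}. Multiplying by $2$ and rearranging,
\[
\frac{1}{N}\sum_{j=1}^n \int_X \sigma_j^{(k)}(x,Nh)\, d\mathrm{vol}(x) \;=\; \frac{2 a_N}{N} \;\geq\; 2\sum_{i=1}^n \chi_i \;=\; nh,
\]
which is \eqref{eqn:5.1}; dividing by $n$ gives \eqref{eqn:5.2}. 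The main obstacle is the pointwise identification of $\sum_{j=1}^n \sigma_j^{(k)}$ with the exterior-power operator norm (which needs the hyperk\"ahler symmetry of Lemma~\ref{lem:05} to single out the top $n$ indices) and the check that $\mathrm{vol}$ is $f$-invariant so that Kingman applies; with these in hand the remaining measure-theoretic steps are routine.
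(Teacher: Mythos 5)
Your proof is correct and follows essentially the same route as the paper's: the same pointwise identity $\sum_{j\leq n}\sigma_j^{(k)}(x,Nh)=2\log\|\Lambda^nDf^N_x\|_{op}$ (the paper's Lemma~\ref{lem:12}), the same subadditivity/Fekete step, and the same Lyapunov input from Lemma~\ref{lem:09}. The only cosmetic difference is that you invoke Kingman's subadditive ergodic theorem (plus the Oseledets/Ruelle identification of the limit with $\chi_1+\cdots+\chi_n$) to get $\lim_N a_N/N=nh/2$ exactly, whereas the paper gets $\lim_N I_N/N\geq nh/2$ from Fatou's lemma applied to Ruelle's pointwise limit; either version combines with Fekete to give \eqref{eqn:5.1}.
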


We start by establishing the following computational lemma to clear the potential confusion caused by the definition of log-singular values.

\begin{lemma}
  \label{lem:12}
  Endow the metric $\omega_k$ on the holomorphic tangent bundle $T^{1,0}X$. Then
  \[\log\left\|(D_x{f}^N)^{\wedge n}\right\|_{op}=\frac12\left(\sigma_1^{(k)}(x,Nh)+\cdots+\sigma_n^{(k)}(x,Nh)\right),\]
  where the wedge is taken over $\C$.
\end{lemma}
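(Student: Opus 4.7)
The plan is to reinterpret the log-singular values $\sigma_i^{(k)}(x, Nh)$ as twice the logarithms of the ordinary singular values of the $\C$-linear map $D_x f^N\colon T_x^{1,0}X\to T_{f^N(x)}^{1,0}X$ with respect to the Hermitian inner products induced by $\omega_k$, and then invoke the standard linear-algebra identity that the operator norm of an $n$-th complex exterior power equals the product of the top $n$ singular values.

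More concretely, I would first set up the coordinates of Section \ref{subsec:local-setups}: holomorphic coordinates $(z_1,\dots,z_{2n})$ at $x$ and $(w_1,\dots,w_{2n})$ at $f^N(x)$ chosen so that $\omega_k$ has the standard form at both points, while $(f^N)^*\omega_k$ at $x$ is diagonal with entries $\exp(\sigma_i^{(k)}(x,Nh))$ in decreasing order. Writing $A=(a_{ij})$ for the matrix of $D_x f^N$ in the bases $\{\partial/\partial z_i\}$ and $\{\partial/\partial w_j\}$, the same calculation that appeared in the proof of Lemma \ref{lem:05} identifies the $(i,j)$-entry of $(f^N)^*\omega_k$ at $x$ with $(AA^\dagger)_{ij}$. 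The normal form for $(f^N)^*\omega_k$ then says exactly that $AA^\dagger$ is diagonal with eigenvalues $e^{\sigma_i^{(k)}(x,Nh)}$, so the singular values of $A$ (relative to the two orthonormal frames) are $s_i=\exp(\sigma_i^{(k)}(x,Nh)/2)$.

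With this identification, the operator norm of $A^{\wedge n}\colon\bigwedge^n T_x^{1,0}X\to\bigwedge^n T_{f^N(x)}^{1,0}X$ is realized on the decomposable vector $e_1\wedge\cdots\wedge e_n$, where $e_i$ is the $i$-th right singular vector of $A$, and equals the product $s_1 s_2\cdots s_n$ of the top $n$ singular values. Since we have arranged $\sigma_1^{(k)}\geq\cdots\geq\sigma_{2n}^{(k)}$, these top singular values are precisely $s_1,\dots,s_n$, and taking logarithm yields the stated formula $\log\|(D_xf^N)^{\wedge n}\|_{op}=\tfrac12\sum_{i=1}^n\sigma_i^{(k)}(x,Nh)$.

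The computation is essentially bookkeeping and I do not anticipate a serious obstacle; the only point needing a little attention is the convention comparing the Kähler form $\omega_k$ with its associated Hermitian inner product on $T^{1,0}X$, which is already implicit in the formulas of Section \ref{subsec:local-setups}. Note that one does \emph{not} need hyperkähler symmetry here (that was used in Lemma \ref{lem:05}), so the lemma holds for any Kähler metric $\omega_k$.
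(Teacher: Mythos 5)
Your proposal is correct and follows essentially the same route as the paper: in both cases one diagonalizes $(f^N)^*\omega_k$ relative to $\omega_k$ at $x$, identifies the singular values of $D_xf^N$ with $\exp(\sigma_i^{(k)}/2)$, and uses the fact that the operator norm of the $n$-th exterior power equals the product of the top $n$ singular values (which the paper obtains by directly checking that $\partial_1\wedge\cdots\wedge\partial_n$ attains the sup). Your closing observation that hyperk\"ahler symmetry plays no role in this lemma is accurate and a useful remark.
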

\begin{proof}
  Fix a local coordinate $(z_1,\ldots,z_{2n})$ as in \eqref{eqn:3.2} and \eqref{eqn:3.4}. Denote
  \[\partial_i:=\frac{\partial}{\partial z_i},\quad\overline{\partial}_i:=\frac{\partial}{\partial\overline{z}_i},\]
  so that $\partial_i$'s form a $\C$-basis of $T^{1,0}_xX$. Denote $\|{}\cdot{}\|_{k,j}$ the norm by the metric $(f^j)^\ast\omega_k$. One evaluates the operator norm as
  \begin{equation}
    \label{eqn:5.7}
    \|(D_x{f}^N)^{\wedge n}\|_{op} = \sup_{\substack{v\in \bigwedge^{n}T^{1,0}_{x}X \\ v\neq0}}\frac{\|(D_x{f}^N)^{\wedge n}(v)\|_{k}}{\|v\|_{k}}=\sup_{\substack{v\in \bigwedge^{n}T^{1,0}_{x}X \\ v\neq 0}}\frac{\|v\|_{{k,N}}}{\|v\|_{k,0}}.
  \end{equation}
  
  By \eqref{eqn:3.4}, we have
  \begin{equation}
    \label{eqn:5.6} 
    \left\|\partial_i\right\|_{{k,N}}=e^{\sigma_i^{(k)}(x,Nh)/2},
  \end{equation}
  and by \eqref{eqn:3.2}, we have $\|\partial_i\|_{k,0}=1$. 
  Note also that the vectors $\partial_i$'s are orthogonal at $x$, with respect to the metrics $\omega_k$ and $(f^N)^\ast\omega_k$. Because $\sigma^{(k)}_i$ is decreasing in $i$, we see that $v=\partial_1\wedge\cdots\wedge\partial_n$ maximizes the ratio in \eqref{eqn:5.7}. Hence we evaluate
  \begin{align*}
    \|(D_x{f}^N)^{\wedge n}\|_{op} &= \frac{\|\partial_1\wedge\cdots\wedge\partial_n\|_{k,N}}{\|\partial_1\wedge\cdots\wedge\partial_n\|_{k,0}} \\
    &= \frac{\exp\left(\frac12\sum_{i=1}^n\sigma_i^{(k)}(x,Nh)\right)}{1},
  \end{align*}
  so taking the logarithm we have our claim.
\end{proof}

\begin{proof}[Proof of Proposition \ref{lem:08}]
The quantity $\log\left\|(D_x{f}^N)^{\wedge n}\right\|_{op}$ gains the interest because of its relation with Lyapunov exponents \cite[\S{2.1}]{ruelle1979}. For $\mu$-a.e. $x$, we have
\begin{equation}
  \label{eqn:5.9} 
  \lim_{N\to\infty}\frac{1}{N}\log\|(D_x{f}^N)^{\wedge n}\|_{op}=\chi_1+\cdots+\chi_{n}=\frac{nh}{2}.
\end{equation}
Now setting
\begin{equation}
  \label{eqn:5.10}
  I_N:=\int_X\log\|(D_x{f}^N)^{\wedge n}\|_{op}\,d\mathrm{vol}(x),
\end{equation}
we have, by Proposition \ref{lem:12},
\begin{equation}
  \label{eqn:5.11}
  I_N =\frac12\int_X\sigma_1^{(k)}(x,Nh)+\cdots+\sigma_n^{(k)}(x,Nh)\,d\mathrm{vol}(x).
\end{equation}
The integrals $\frac{1}{N}I_N$'s have nonnegative integrands. 
By Fatou's Lemma $\int\liminf f_n\leq\liminf\int f_n$, we have:
\begin{align}
  \liminf_{N\to\infty}\frac{1}{N}I_N &\geq\int_X\liminf_{N\to\infty}\frac{1}{N}\log\|(D_x{f}^N)^{\wedge n}\|_{op}\,d\mathrm{vol}(x) \nonumber \\
  &=\frac{nh}{2}, \label{eqn:5.12} 
\end{align}
where the last line is due to \eqref{eqn:5.9}.

Now the inequality
$\|(D_x{f}^{N+M})^{\wedge n}\|_{op}\leq\|(D_x{f}^N)^{\wedge n}\|_{op}\cdot\|(D_{{f}^N(x)}{f}^M)^{\wedge n}\|_{op}$
induces the subadditivity $I_{N+M}\leq I_N+I_M$. By Fekete's Lemma \cite{Fekete1923},
\[
  \inf_{N\geq 1}\frac{1}{N}I_N=\lim_{N\to\infty}\frac{1}{N}I_N,
\]
thus for any $N$, we have
\[
  \frac{1}{N}I_N\geq\lim_{N\to\infty}\frac{1}{N}I_N\geq\frac{nh}{2}.
\]
This finishes the proof, thanks to \eqref{eqn:5.11}.
\end{proof}

\subsection{Jensen's Inequality and Constant Log-singular Values}
\label{sec:jensen-inequality-constant-log-singular-values}

Now we demonstrate how the limit \eqref{eqn:4.6} and Proposition \ref{lem:08} are combined. 
The trick is to use Jensen's inequality, combined with the (strong) convexity of the
$\cosh$ function.

The upshot of this combination is a result on the log-singular values of the `limit metrics' $({f}^N)^\ast\omega_0$, relative to $\omega_0$ (stated in Corollary \ref{lem:14}). By this, we get some simple local representations of these metrics.

Let $B$ be a probability space, whose underlying space is 
$(X\setminus E)\times\{1,\cdots,n\}$, and whose probability measure is 
$\mathrm{vol}\times(\frac{1}{n}\#)$, where $\#$ is the counting measure. 
For $(x,j)\in B$, define a random variable $\Sigma^{(k)}$ as 
$\Sigma^{(k)}(x,j)=\sigma_j^{(k)}(x,Nh)$.

Then the limit \eqref{eqn:4.6} can be rewritten as
\begin{equation}
  \label{eqn:6.1}
  \mathbb{E}[\cosh(\Sigma^{(k)})]-\cosh(Nh)\xrightarrow{k\to\infty}0,
\end{equation}
and the inequality \eqref{eqn:5.2} in Proposition \ref{lem:08} can be rewritten as
\begin{equation}
  \label{eqn:6.2}
  \mathbb{E}[\Sigma^{(k)}]\geq Nh.
\end{equation}

To motivate what follows, we note that Jensen's inequality applied to the 
convex function $\cosh$ gives: 
$\mathbb{E}[\cosh(\Sigma^{(k)})]\geq\cosh(\mathbb{E}[\Sigma^{(k)}])\geq\cosh(Nh)$.
Then \eqref{eqn:6.1} implies that the inequality asymptotically collapses 
as $k\to\infty$. 
There we want to conclude that $\Sigma^{(k)}$ and $Nh$ are approximately the same. 
This idea is precised in
\begin{proposition}
  \label{lem:13}
  As $k\to\infty$,
  the variance of $\Sigma^{(k)}$ is converging to 0,
  and the expected value of $\Sigma^{(k)}$ is converging to $Nh$. 
  That is,
  \[\int_X\frac{1}{n}\sum_{j=1}^n\left(\sigma^{(k)}_j(x,Nh)-Nh\right)^2\,d\mathrm{vol}(x)\xrightarrow{k\to\infty}0.\]
\end{proposition}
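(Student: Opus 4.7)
The plan is to exploit the strong convexity of $\cosh$---namely that $\cosh''(t)=\cosh(t)\geq 1$ for all $t\in\R$---to convert the asymptotic pinching implicit in \eqref{eqn:6.1} and \eqref{eqn:6.2} into a simultaneous bound on the variance and the bias of $\Sigma^{(k)}$. The underlying picture is: ordinary Jensen combined with \eqref{eqn:6.2} already forces $\mathbb{E}[\cosh(\Sigma^{(k)})]\geq\cosh(Nh)$, while \eqref{eqn:6.1} forces this inequality to be tight in the limit, so there is no asymptotic room for $\Sigma^{(k)}$ to deviate from its mean.

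In more detail, I would first record that the symmetry in Lemma \ref{lem:05} combined with the decreasing ordering \eqref{eqn:3.5} forces $\sigma_j^{(k)}(x,Nh)\geq 0$ for $j\leq n$, so $\Sigma^{(k)}\geq 0$ pointwise and in particular $\mathbb{E}[\Sigma^{(k)}]\geq Nh\geq 0$, a region where $\cosh$ is monotone increasing. Next, Taylor's theorem with integral remainder applied to $\cosh$ and integrated over the probability space $B$ yields the strengthened Jensen inequality
\[\mathbb{E}[\cosh(\Sigma^{(k)})] \;\geq\; \cosh\bigl(\mathbb{E}[\Sigma^{(k)}]\bigr) + \tfrac{1}{2}\mathrm{Var}(\Sigma^{(k)}),\]
which together with monotonicity of $\cosh$ on $[0,\infty)$ gives the chain
\[\cosh(Nh) \;\leq\; \cosh\bigl(\mathbb{E}[\Sigma^{(k)}]\bigr) \;\leq\; \mathbb{E}[\cosh(\Sigma^{(k)})] - \tfrac{1}{2}\mathrm{Var}(\Sigma^{(k)}).\]
Letting $k\to\infty$ and invoking \eqref{eqn:6.1}, the outer quantities both converge to $\cosh(Nh)$; this pinches the variance to zero and, by continuity of $\cosh^{-1}$ on $[1,\infty)$, drives $\mathbb{E}[\Sigma^{(k)}]\to Nh$. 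The conclusion is then the bias-variance identity
\[\int_X \tfrac{1}{n}\sum_{j=1}^n \bigl(\sigma_j^{(k)}(x,Nh) - Nh\bigr)^2\,\mathrm{vol}(dx) \;=\; \mathrm{Var}(\Sigma^{(k)}) + \bigl(\mathbb{E}[\Sigma^{(k)}] - Nh\bigr)^2,\]
both of whose terms vanish as $k\to\infty$.

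I do not anticipate a genuine obstacle; the only subtle point is that the strong-convexity inequality must hold pointwise on all of $\R$ with no a priori range bound on $\Sigma^{(k)}$, but this is immediate from $\cosh''\geq 1$ uniformly. Integrability is likewise harmless, since each $\omega_k$ is smooth on the compact manifold $X$ and so $\Sigma^{(k)}$ is bounded (for each fixed $k$). The real conceptual content is already in place: the asymptotic saturation of Jensen's inequality upgrades, via strong convexity, to a direct $L^2$ control on the log-singular values, which is the whole point of the proposition.
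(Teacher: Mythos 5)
Your argument is correct and rests on the same central mechanism as the paper's: the strong convexity of $\cosh$ (i.e.\ $\cosh''\geq 1$ uniformly) converts the asymptotic saturation of Jensen's inequality, forced by \eqref{eqn:6.1} and \eqref{eqn:6.2}, into an $L^2$ estimate on $\Sigma^{(k)}$. The only real difference is where you center the second-order Taylor bound. You expand around the random variable's own mean $\mathbb{E}[\Sigma^{(k)}]$, which gives $\mathrm{Var}(\Sigma^{(k)})\to 0$; you then separately drive $\mathbb{E}[\Sigma^{(k)}]\to Nh$ via monotonicity of $\cosh$ on $[0,\infty)$ and the continuity of its inverse, and finally recombine with the bias--variance identity. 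The paper instead centers the expansion at the fixed point $a=Nh$, so that the linear term is $\sinh(Nh)\bigl(\mathbb{E}[\Sigma^{(k)}]-Nh\bigr)$, which is nonnegative directly by \eqref{eqn:6.2}; discarding it gives $\mathbb{E}\bigl[(\Sigma^{(k)}-Nh)^2\bigr]\leq 2\bigl(\mathbb{E}[\cosh(\Sigma^{(k)})]-\cosh(Nh)\bigr)\to 0$ in a single step, with no need for the inverse-function continuity argument or the bias--variance split. Both routes are valid; centering at $Nh$ is the small simplification that controls bias and variance simultaneously. Your preliminary observation that $\Sigma^{(k)}\geq 0$ pointwise is correct (it follows from Lemma \ref{lem:05} and the ordering \eqref{eqn:3.5}) but is not actually needed: $\mathbb{E}[\Sigma^{(k)}]\geq Nh\geq 0$ already follows from \eqref{eqn:6.2} alone.
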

\begin{proof}
  We start with an elementary inequality, 
  which holds for any $x,a\in\R$:
  \begin{equation}
    \label{eqn:6.3}
    \cosh(x) \geq \cosh(a) + \sinh(a)\cdot(x-a) + \frac{1}{2}(x-a)^2.
  \end{equation}
  Apply $x=\Sigma^{(k)}$ and $a=Nh$ into \eqref{eqn:6.3}. 
  Taking the average, we get
  \begin{align*}
    \mathbb{E}[\cosh(\Sigma^{(k)})] &\geq \cosh(Nh) + \sinh(Nh)\cdot\underbrace{\left(\mathbb{E}[\Sigma^{(k)}]-Nh\right)}_{\geq 0\text{ by \eqref{eqn:6.2}}} + \frac{1}{2}\mathbb{E}[(\Sigma^{(k)}-Nh)^2] \\
    &\geq \cosh(Nh) + \frac{1}{2}\mathbb{E}[(\Sigma^{(k)}-Nh)^2].
  \end{align*}
  Thus
  \[0\leq\mathbb{E}[(\Sigma^{(k)}-Nh)^2]\leq 2\cdot\left( \mathbb{E}[\cosh(\Sigma^{(k)})]-\cosh(Nh)\right)\xrightarrow{k\to\infty}0,\]
  where we have used the limit \eqref{eqn:6.1}. This implies 
  $\mathbb{E}[(\Sigma^{(k)}-Nh)^2]\to 0$. 
  Our proposition restates this limit.
\end{proof}

Passing to a subsequence of $\left(\omega_k\right)$ if necessary, we further have that 
$\sigma^{(k)}_j(x,Nh)\to Nh$, for $\mathrm{vol}$-a.e. $x$.

What Proposition \ref{lem:13} concludes for the metric $\omega_0$, the limit metric of $\omega_k$'s (see Lemma \ref{lem:01}) defined on $X\setminus E$, is the following

\begin{corollary}
  \label{lem:14}
  For each $x\in X\setminus E$, the log-singular values of 
  $({f}^N)^\ast\omega_0$ relative to $\omega_0$ are $Nh$ and $-Nh$, counted
  $n$ times respectively. 

  That is, for each $x\in X\setminus E$, one can find a holomorphic coordinate 
  $(z_1,\cdots,z_{2n})$ in which the following expressions hold on the tangent space at $x$.
  \begin{align}
    \omega_0 &= \frac{\sqrt{-1}}{2}\sum_{i=1}^{2n}dz_i\wedge d\overline{z}_i, \label{eqn:special-local-coordinates-1} \\
    ({f}^N)^\ast\omega_0 &= \frac{\sqrt{-1}}{2}\sum_{\mu=1}^n e^{Nh}\, dz_\mu\wedge d\overline{z}_\mu + e^{-Nh}\, dz_{n+\mu}\wedge d\overline{z}_{n+\mu}. \label{eqn:special-local-coordinates-2}
  \end{align}
\end{corollary}

We recall that the log-singular values discussed above are eigenvalues of some matrix $\left(h^{(k)}_{ij}\right)$ defined at \eqref{eqn:3.3}. This matrix is defined with a choice of local coordinates, hence \emph{a priori} one cannot expect any regularity for log-singular values. However, one can carefully make a choice of coordinates based on the smoothness of the metric $\omega_0$ and $C^\infty_{\mathrm{loc}}$ convergence of metrics $\omega_k\to\omega_0$. With this, one can use the continuity argument to ensure that the log-singular values of $\omega_0$ are $\pm Nh$.

\begin{proof}
  Fix $N>0$. 
  Let $E'$ denote the union of $E$ and the set of $x\in X\setminus E$ in which the limit $\sigma^{(k)}_j(x,Nh)\to Nh$ as $k\to\infty$ \emph{fails}. We recall that $E'$ necessarily has the zero volume; so $X\setminus E'$ is dense in $X$.

  Because $\omega_0$ is smooth, for all $x_0\in X\setminus E$ we have a neighborhood $U$ of $x_0$ where one can find smooth $(1,0)$-vector fields $(V_i(x))_{i=1}^{2n}$ such that $\omega_0(V_i(x),V_j(x))=\delta_{ij}$. On each point $x\in U$, one can find a holomorphic coordinate $(z_1^{(x)},\ldots,z_{2n}^{(x)})$ such that $\left.\dfrac{\partial}{\partial z_i^{(x)}}\right|_{x}=V_i(x)$ and $\omega_{0,x}=\frac12\sqrt{-1}\sum_{i=1}^{2n}dz_i^{(x)}\wedge d\overline{z}^{(x)}_i$, at $x$ (and possibly no more). Then the matrix $\left(h_{ij}(x)\right)$ defined by
  \begin{align*}
    h_{ij}(x)&=(f^N)^\ast\omega_0\left(\frac{\partial}{\partial z_i^{(x)}},\frac{\partial}{\partial z_j^{(x)}}\right) \\
    &= \left((f^N)^\ast\omega_0\right)_x(V_i(x),V_j(x))
  \end{align*}
  vary smoothly in $x$. As eigenvalues behave continuously by the perturbation of a matrix \cite[Theorem II.5.1]{kato2013}, if we can show that for all $x\in U\setminus E'$ the matrix $\left(h_{ij}(x)\right)$ has eigenvalues $e^{Nh}$ and $e^{-Nh}$, counted $n$ times for each, then the same property holds for all $x\in U$.

  Suppose $x\in U\setminus E'$. 
  As $\left(\omega_k\right)\to\omega_0$ in 
  $C^\infty_{\mathrm{loc}}(X\setminus E)$ topology, focusing on the 
  compact set $\{x,{f}^N(x)\}$, we see that the matrices 
  $\left(h^{(k)}_{ij}(x)\right)$ (see \eqref{eqn:3.3}) at $x$ converge to the analogous matrix $\left(h_{ij}(x)\right)$ for $\omega_0$ at $x$. (An appropriate choice of ``frame'' vector fields $(V^{(k)}_i(x))\to(V_i(x))$ should guarantee this.)
  Again, as eigenvalues behave continuously by the perturbation, 
  the numbers $\exp\left(\sigma^{(k)}_j(x,Nh)\right)$'s approximate 
  eigenvalues of $\left(h_{ij}(x)\right)$. Because $\sigma^{(k)}_j(x,Nh)\to\pm Nh$ as $k\to\infty$, we see that $\left(h_{ij}(x)\right)$ has eigenvalues 
  $e^{Nh}$ and $e^{-Nh}$, counted $n$ times for each.
\end{proof}

\section{Stable and Unstable Manifolds}
\label{sec:stable-unstable-distributions}

The local expressions for $({f}^N)^\ast\omega_0$ and $\omega_0$ (Corollary \ref{lem:14}) verify that ${f}^N$ is expanding and contracting along certain directions in a uniform rate. Initially, these directions are possibly dependent on the time $N$, but one can show that they are actually time independent (Lemma \ref{lem:15}). By this fact, we establish the uniform hyperbolicity for ${f}$ on $X\setminus E$ (Proposition \ref{lem:16}).

This is perhaps one of the rarest moment where one can describe Oseledets splitting (cf. \cite[Theorem 2.2.6]{Fil17}\cite[Theorem 1.6]{ruelle1979}) without limits and thus can verify that it is smooth. But we have a better fact: the distributions define \emph{holomorphic} foliations (Proposition \ref{lem:17}). One can establish this using upper and lower estimates on the growth rate of $f^N$ along stable or unstable distributions, which is typically more than what we know even in uniformly hyperbolic settings.

\subsection{Stable and Unstable Distributions}

Corollary \ref{lem:14} yields that, for every point $x\in X$ and $N\in\Z$, there exists $n$-dimensional subspaces $E^{+N}_{x},E^{-N}_{x}\subset T^{1,0}_xX$ such that every $v\in E^{\pm N}_{x}$ has $\left((f^N)^\ast\omega_0\right)_x(v,v)=e^{\pm Nh}\omega_{0,x}(v,v)$.

We first show that these subspaces $E^{\pm N}_{x}$ do not depend on $N$. We expect this because log-singular values of $(f^N)^\ast\omega_0$ uniformly cumulate by $\pm h$ as we proceed $N$; to have the `optimal cumulation,' we find that the directions that expand by $e^{Nh}$ in $(f^N)^\ast\omega_0$ should also expand by $e^{(N+1)h}$ in $(f^{N+1})^\ast\omega_0$, and similarly for contracting directions.

\begin{lemma}
  \label{lem:15}
  Denote $\|\cdot\|_j$ for the norm associated to the metric $(f^j)^\ast\omega_0$. For any $x\in X\setminus E$ and $N\in\Z_{>0}$, define the following subsets $E^{+N}_x,E^{-N}_x$ and subspaces $F^{+N}_x,F^{-N}_x$ in the holomorphic tangent space $T^{1,0}_xX$:
  \begin{align}
    E^{\pm N}_x &= \{v\in T^{1,0}_xX : \|v\|_N = e^{\pm Nh/2}\|v\|_0\}, \label{eqn:7.2} \\
    F^{\pm N}_x &= \{v\in T^{1,0}_xX : (f^N)^\ast\omega_0(v,w)=e^{\pm Nh}\omega_0(v,w)\ \forall w\in T_xX\}. \label{eqn:7.1}
  \end{align}
  Then we have $E^{+N}_x=F^{+N}_x=E^{+1}_x=F^{+1}_x$ and $E^{-N}_x=F^{-N}_x=E^{-1}_x=F^{-1}_x$ for all $N>0$ and $x\in X\setminus E$. Furthermore, the distributions $E^{\pm 1}$ defined in these fashions are $f$-invariant, i.e., $D_xf(E^{\pm 1}_x)=E^{\pm 1}_{f(x)}$, and have complex dimensions $n$.
\end{lemma}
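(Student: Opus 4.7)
The plan is to first realize $E^{\pm N}_x$ and $F^{\pm N}_x$ as two descriptions of the same pair of eigenspaces of a single Hermitian operator, then run a chain-rule argument producing a nested chain $E^{+1}_x\supset E^{+2}_x\supset\cdots$ that collapses by the dimension count supplied by Corollary \ref{lem:14}, and finally derive $f$-invariance from the same equality characterization.

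First I would note that both $\omega_0$ and $(f^N)^\ast\omega_0$ are K\"ahler on $X\setminus E$ (the latter because $f$ is a holomorphic automorphism), so each induces a Hermitian inner product on $T^{1,0}_xX$ whose associated norm is $\|\cdot\|_0$, respectively $\|\cdot\|_N$. These two inner products are related by a unique positive endomorphism $H_N$ of $T^{1,0}_xX$, self-adjoint with respect to $\langle\cdot,\cdot\rangle_0$, with $\|v\|_N^2=\langle H_Nv,v\rangle_0$; and by construction $F^{\pm N}_x$ is precisely the $e^{\pm Nh}$-eigenspace of $H_N$. Corollary \ref{lem:14} is exactly the statement that the spectrum of $H_N$ consists of $e^{+Nh}$ and $e^{-Nh}$ each with complex multiplicity $n$, so $F^{+N}_x$ and $F^{-N}_x$ are $n$-dimensional complex subspaces that direct-sum to $T^{1,0}_xX$. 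The Rayleigh quotient principle, applied to the spectral gap, then identifies $E^{+N}_x=F^{+N}_x$ and $E^{-N}_x=F^{-N}_x$.

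Next I would show $E^{+N}_x=E^{+1}_x$ by composition. Set $u:=D_xf^N(v)\in T^{1,0}_{f^N(x)}X$. By the chain rule, $\|v\|_{N+1,x}=\|u\|_{1,f^N(x)}$ and $\|v\|_{N,x}=\|u\|_{0,f^N(x)}$. Applying Corollary \ref{lem:14} at $f^N(x)$ with time $1$ and at $x$ with time $N$ yields the composite estimate
\[\|v\|_{N+1,x}\;\leq\;e^{h/2}\|v\|_{N,x}\;\leq\;e^{(N+1)h/2}\|v\|_{0,x}.\]
Equality throughout is equivalent to $v\in E^{+N}_x$ \emph{and} $D_xf^N(v)\in E^{+1}_{f^N(x)}$, while by definition it is also equivalent to $v\in E^{+(N+1)}_x$. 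In particular, $E^{+(N+1)}_x\subset E^{+N}_x$; both sides have complex dimension $n$ by the first paragraph, so the inclusion is an equality, and induction on $N$ gives $E^{+N}_x=E^{+1}_x$ for every $N\geq 1$. The minus case is symmetric, using the minimum Rayleigh quotient with reversed inequalities (or equivalently by applying the same argument to $f^{-1}$, which satisfies the identical hypotheses with the roles of $[\eta_\pm]$ swapped).

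The $f$-invariance then drops out by revisiting the equality characterization with $N=1$: any $v\in E^{+1}_x=E^{+2}_x$ must satisfy $D_xf(v)\in E^{+1}_{f(x)}$, so $D_xf(E^{+1}_x)\subset E^{+1}_{f(x)}$; since $D_xf$ is a $\C$-linear isomorphism and both subspaces have complex dimension $n$, equality follows. The same argument handles $E^{-1}$. The one place requiring care is bookkeeping of the three metrics $\omega_0$, $f^\ast\omega_0$, $(f^N)^\ast\omega_0$ at the two base points $x$ and $f^N(x)$ when invoking Corollary \ref{lem:14}; once the composition is set up correctly, every remaining assertion is forced by the spectral gap in $H_N$ and a dimension count.
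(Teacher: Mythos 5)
Your proof is correct and takes essentially the same route as the paper: identify $E^{\pm N}_x$ with $F^{\pm N}_x$ as the two eigenspaces supplied by Corollary \ref{lem:14}, derive the inequality $\|v\|_N\leq e^{Nh/2}\|v\|_0$ with equality characterizing the unstable space, and run a chain-rule/equality argument plus a dimension count to force $E^{+N}_x=E^{+1}_x$ and $f$-invariance. The only differences are cosmetic: you phrase the $E^{\pm N}_x=F^{\pm N}_x$ identification via a Hermitian operator and its Rayleigh quotient rather than the paper's explicit coordinate spans $G^{\pm N}_x$, and you peel off the last iterate ($\|v\|_{N+1}\leq e^{h/2}\|v\|_N$) where the paper peels off the first ($\|v\|_N/\|v\|_1=\|f_\ast v\|_{N-1}/\|f_\ast v\|_0$).
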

\begin{proof}
  Fix $N>0$. Fix a holomorphic coordinate $(z_1,\cdots,z_{2n})$ at $x$ that 
  appears in the conclusion of the Corollary \ref{lem:14}. The corresponding holomorphic vectors will have shorthand notations
  \[\partial_i := \frac{\partial}{\partial z_i}.\]
  Denote $G^{+N}_x$ for the (complex) span of the vectors $\partial_1,\ldots,\partial_n$, and $G^{-N}_x$ for the span of the vectors $\partial_{n+1},\ldots,\partial_{2n}$.

  We first claim that $E^{+N}_x=F^{+N}_x=G^{+N}_x$ and $E^{-N}_x=F^{-N}_x=G^{-N}_x$. We show $F^{+N}_x\subset E^{+N}_x\subset G^{+N}_x\subset F^{+N}_x$ to establish the former; the latter can be dealt similarly. Verifications of the inclusions follow.
  \begin{itemize}
  \item Let $v\in F^{+N}_x$. In the identity $(f^N)^\ast\omega_0(v,w)=e^{Nh}\omega_0(v,w)$, we plug in $w=v$. By that we obtain $\|v\|_N^2=e^{Nh}\|v\|_0^2$, so $v\in E^{+N}_x$.
  
  \item Let $v\in E^{+N}_x$. Decompose $v=v^++v^-$ where $v^\pm\in G^{\pm N}_x$. Since both $\omega_0$ and $(f^N)^\ast\omega_0$ view that $G^{\pm N}_x$ are orthogonal, we have
  \begin{align}
    \|v\|_N^2 &= \|v^+\|_N^2+\|v^-\|_N^2. \nonumber
    \intertext{By \eqref{eqn:special-local-coordinates-2}, we can directly evaluate $\|v^\pm\|_N^2$ relative to $\|v^\pm\|_0^2$ and obtain}
    &= e^{Nh}\|v^+\|_0^2 + e^{-Nh}\|v^-\|_0^2 \nonumber \\
    &= e^{Nh}\|v\|_0^2 + (e^{-Nh}-e^{Nh})\|v^-\|_0^2. \label{eqn:7.5-1}
  \end{align}
  Thus $\|v\|_N=e^{Nh/2}\|v\|_0$ implies $\|v^-\|_0=0$. Hence $v=v^+\in G^{+N}_x$.

  \item Let $v\in G^{+N}_x$. Then for any $w^+\in G^{+N}_x$, we have
  \[(f^N)^\ast\omega_0(v,w^+) = e^{Nh}\omega_0(v,w^+)\]
  (by \eqref{eqn:special-local-coordinates-2}). On the other hand, for any $w^-\in G^{-N}_x$, as this is orthogonal to $G^{+N}_x$, we have
  \[(f^N)^\ast\omega_0(v,w^-) = 0 = e^{Nh}\omega_0(v,w^-).\]
  These verify $v\in F^{+N}_x$.
  \end{itemize}

  We note that, as $G^{\pm N}_x$ have dimensions $n$, so are $E^{\pm N}_x$. Furthermore, by the equation \eqref{eqn:7.5-1}, we have an inequality
  \begin{equation}\label{eqn:7.5}\|v\|_N\leq e^{Nh/2}\|v\|_0,\end{equation}
  with equality if and only if $v\in E^{+N}_x$.

  It remains to show that $E^{+N}_x=E^{+1}_x$, for $N>1$. Let $v\in E^{+N}_x$. Then,
  \begin{align}
    \frac{Nh}{2}=\log\frac{\|v\|_{N}}{\|v\|_0}
    &=\log\frac{\|v\|_{N}}{\|v\|_{1}}+\log\frac{\|v\|_{1}}{\|v\|_0} \nonumber\\
    &=\log\frac{\|{f}_\ast v\|_{N-1}}{\|{f}_\ast v\|_0}+\log\frac{\|v\|_1}{\|v\|_0} \nonumber\\
    &\leq\frac{(N-1)h}{2}+\frac{h}{2}=\frac{Nh}{2}. \label{eqn:7.11} 
  \end{align}
  In \eqref{eqn:7.11}, we have used the inequality \eqref{eqn:7.5}. 
  Comparing two sides, we find that we have the equality for \eqref{eqn:7.11}. But then the equality condition of \eqref{eqn:7.5} yields that (a) $v\in E^{+1}_x$ and (b) $f_\ast v=D_xf(v)\in E^{+(N-1)}_{f(x)}$. Therefore (a$'$) $E^{+N}_x\subset E^{+1}_x$ and (b$'$) $D_xf(E^{+N}_x)\subset E^{+(N-1)}_{f(x)}$ hold, which turn out to be equalities thanks to dimension comparisons. This establishes both $E^{+N}_x=E^{+1}_x$ for all $N>1$ and the $f$-invariance.
  
  That $E^{-N}_x=E^{-1}_x$ and their ${f}$-invariance are shown in a similar way, but we need to change \eqref{eqn:7.5} to $\|v\|_0\leq e^{Nh/2}\|v\|_N$, with equality iff $v\in E^{-N}_x$. A further change is required for \eqref{eqn:7.11}, say starting with $Nh/2=\log(\|v\|_0/\|v\|_N)$ where $v\in E^{-N}_x$.
\end{proof}

Denote $E^+,E^-$ for distributions $E^{+1},E^{-1}$ in the above Lemma, respectively. They are unstable and stable distributions, respectively.

\begin{proposition}
  \label{lem:16}
  We have the following operator norms, evaluated at $x\in X\setminus E$ with respect to the metric $\omega_0$:
  \[\|D{f}^N|E^\pm\|_{op}=e^{\pm Nh/2},\quad\text{ and }\quad\|D{f}^{-N}|E^\pm\|_{op}=e^{\mp Nh/2},\]
  where $N\in\Z$ is any integer, including nonpositive ones. In particular, ${f}$ is uniformly hyperbolic on $X\setminus E$, and
  $E^+$, $E^-$ respectively denote unstable and stable distributions
  on $X\setminus E$.
\end{proposition}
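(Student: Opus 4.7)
The plan is to leverage Lemma \ref{lem:15} directly. The key observation is that the pullback-metric norm $\|v\|_j$ is precisely the $\omega_0$-norm of the differential $D_xf^j(v)$ at $f^j(x)$: expanding the definition,
\[ \|v\|_j^2 = ((f^j)^\ast\omega_0)_x(v,v) = \omega_0|_{f^j(x)}(D_xf^j v, D_xf^j v) = \|D_xf^j v\|_{\omega_0}^2. \]
So the defining equality \eqref{eqn:7.2} of $E^{\pm N}_x$ can be read off as saying that $Df^N$ rescales every vector in $E^\pm_x = E^{\pm N}_x$ by exactly the constant factor $e^{\pm Nh/2}$ in $\omega_0$-norm.

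First, I handle positive powers. For any $v \in E^+_x$ and any $N > 0$, Lemma \ref{lem:15} gives $v \in E^{+N}_x$, so $\|D_xf^N v\|_{\omega_0} = e^{Nh/2}\|v\|_{\omega_0}$. Because this is an exact equality valid for every $v \in E^+_x$ (not just a supremum), the restricted operator norm $\|Df^N|E^+\|_{op}$ equals $e^{Nh/2}$ on the nose. The same argument for $E^-$ gives $\|Df^N|E^-\|_{op} = e^{-Nh/2}$.

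Next, I handle negative powers using the $f$-invariance from Lemma \ref{lem:15}. Since $D_xf(E^\pm_x) = E^\pm_{f(x)}$, the restricted differential $Df^M \colon E^\pm_{f^{-M}(x)} \to E^\pm_x$ is a linear isomorphism for each $M > 0$. Taking $v \in E^+_x$ and letting $w := Df^{-M}(v) \in E^+_{f^{-M}(x)}$, the positive-power case applied at $f^{-M}(x)$ gives $\|v\|_{\omega_0} = \|Df^M w\|_{\omega_0} = e^{Mh/2}\|w\|_{\omega_0}$, whence $\|Df^{-M} v\|_{\omega_0} = e^{-Mh/2}\|v\|_{\omega_0}$, so that $\|Df^{-M}|E^+\|_{op} = e^{-Mh/2}$; the $E^-$ case is analogous by exchanging the roles of $E^+$ and $E^-$. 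Writing $N = -M$ extends the first formula to negative integers, and the second formula is simply the first with $N$ replaced by $-N$.

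Finally, uniform hyperbolicity on $X \setminus E$ is immediate from the established operator-norm identities: the expansion and contraction factors $e^{\pm h/2}$ are constants independent of the basepoint $x \in X \setminus E$, and the complex splitting $T^{1,0}X|_{X\setminus E} = E^+ \oplus E^-$ (smooth by Corollary \ref{lem:14}) translates to a smooth hyperbolic splitting of the underlying real tangent bundle. Since all the analytic work has already been carried out in the previous section and Lemma \ref{lem:15}, there is essentially no technical obstacle here; the proposition amounts to bookkeeping that converts the metric-equality characterization of $E^\pm$ into a statement about $Df^N$.
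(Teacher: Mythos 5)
Your proposal is correct and follows essentially the same route as the paper: for positive powers, read off the exact scaling from the definition of $E^{\pm N}_x$ in Lemma~\ref{lem:15}, and for negative powers, push the vector forward by $f^{-N}$ (using the $f$-invariance established in Lemma~\ref{lem:15}) and apply the positive-power case at the shifted base point. The paper expresses the negative-power step as the chain $\|v\|_{-N}/\|v\|_0=\|f^{-N}_\ast v\|_0/\|f^{-N}_\ast v\|_N$, which is the same manipulation you perform with $w=Df^{-M}(v)$.
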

\begin{proof}
  We have shown that $\|D{f}^N|E^\pm\|_{op}=e^{\pm Nh/2}$ in Lemma \ref{lem:15}, for $N>0$. 
  It remains to show the same identity for $N<0$. 
  As usual, for $j\in\Z$, we denote $\|\cdot\|_j$ by the norm associated to the metric $({f}^j)^\ast\omega_{0}$.

  For $N<0$, let $M=-N>0$. To estimate $\|D{f}^{-M}|E^\pm\|_{op}$, we pick up $v\in E^\pm_x$ and estimate 
  $\|v\|_{-M}/\|v\|_0$. Here, by definition of $({f}^{-M})^\ast\omega_{0}$, 
  we have
  \[\left(\frac{\|v\|_N}{\|v\|_0}=\right)\frac{\|v\|_{-M}}{\|v\|_0}=\frac{\|{f}^{-M}_\ast v\|_0}{\|{f}^{-M}_\ast v\|_{M}}=\frac{1}{e^{\pm Mh/2}}=e^{\pm Nh/2},\] 
  since ${f}^{-M}_\ast v\in E^{\pm}_{{f}^{-M}(x)}$ as well.
  This shows the claim.
\end{proof}

The expression \eqref{eqn:7.1} shows that $E^\pm$ are characterized
by $C^\infty$ conditions (essentially because $\omega_0$ is smooth).
Therefore they are $C^\infty$ distributions.
One can appropriately multiply matrices for ${f}^\ast\omega_0$ and
$\omega_0^{-1}$ to find explicit descriptions for
$C^\infty$ vector field generators of them. 
We then have the following

\begin{proposition}
  \label{lem:doubly-foliating-chart}
  For each $p\in X\setminus E$ there is a real $C^\infty$ chart $(V;x_1^+,\ldots,x^+_{2n},x_1^-,\ldots,x_{2n}^-)$ near $p$ such that
  \begin{itemize}
    \item on the coordinate neighborhood $V$, $E^\pm=\bigcap_{i=1}^{2n}\ker(dx^\pm_i)$, and
    \item the slice manifolds $\{x_i^-=c_i,\forall i=1,\ldots,2n\}\subset V$ are open subsets of local stable manifolds in $V$ and likewise for local unstable manifolds.
  \end{itemize}
\end{proposition}
By the second item, we say $x^+_1,\ldots,x^+_{2n}$ the \emph{stable coordinates} (as they form coordinates on stable manifolds) and $x^-_1,\ldots,x^-_{2n}$ the \emph{unstable coordinates}.
\begin{proof}
  By Frobenius theorem \cite[Theorem 19.12]{LeeSmooth}, one can obtain (real $C^\infty$) foliating charts $(\hat{V},\mathbf{x}^\pm)$ for $E^\pm$ at $p$, where $\mathbf{x}^\pm=(x^\pm_1,\ldots,x^\pm_{4n})$ and $E^\pm=\bigcap_{i=1}^{2n}\ker(dx^\pm_i)$. Let $\mathbf{x}=(x_1^+,\ldots,x_{2n}^+,x_1^-,\ldots,x_{2n}^-)$ be a map $\mathbf{x}\colon\hat{V}\to\R^{4n}$. Then its derivative has the kernel $\bigcap_{i=1}^{2n}(\ker(dx^+_i)\cap\ker(dx^-_i))=E^+\cap E^-=0$. Hence $\mathbf{x}$ locally defines smooth coordinates, say on $V\Subset\hat{V}$ a precompact neighborhood of $p$. 


  For each point $q\in V$, denote $\zeta^-(q)=\{y\in V : (\forall 1\leq i\leq 2n)(x^-_i(y)=x^-_i(q))\}$. We claim that $\zeta^-(q)$ is an open neighborhood of $q$ in $W^-_{\mathrm{loc}}(q)$; here, $W^-_{\mathrm{loc}}(q)$ is the local stable manifold of $q$ in $V$ in the sense of \cite[Theorem 6.1(a)]{ruelle1979}. In particular, we show that there is a constant $C(V)>0$ depending only on $V$ such that for every $y\in \zeta^-(q)$ and $N>0$, we have
  \begin{equation}
    \label{eqn:slice-local-stable-manifold}
    \mathrm{dist}_{\omega_0}(f^Ny,f^Nq)\leq C(V)e^{-Nh/2}.
  \end{equation}
  To see this, consider the curve $\gamma(t)=\mathbf{x}^{-1}((1-t)\mathbf{x}(q)+t\mathbf{x}(y))$ on $V$. Then we have $\gamma'(t)\in E^-$ for all $t\in[0,1]$ and the length of $\gamma$ is bounded above by a constant $C(V)$ that depends on the diameter of $\mathbf{x}(V)\Subset\R^{4n}$ and a bound for $\omega_0$ on $V$. Sending the curve $\gamma$ by $f^N$, we see that the length of the curve $f^N\circ\gamma$ is at most $e^{-Nh/2}$ times the length of $\gamma$. Thus we have the inequality \eqref{eqn:slice-local-stable-manifold}.
\end{proof}

\subsection{Holomorphicity of the Stable Distribution}
\label{sec:holomorphicity-stable-distribution}

So far we have studied about the stable and unstable distributions, and concluded that 
they are defined in a $C^\infty$ manner. 
What can be claimed further is that, the stable and unstable foliations 
are actually holomorphic. 

As shown in \cite[Lemma 2.1]{ghys1995}, each leaf of the foliations $W^\pm$ 
generated by $E^\pm$ (respectively), are holomorphic manifolds. 
In particular, there are unstable vector fields which are holomorphic along 
unstable manifolds, and it thus remains to show its holomorphicity 
along the transverse direction. 
(Similar claim may be made for stable vector fields.)

The trick is to use the \emph{Poincar\'e map}, as described in 
\cite[\S{}III.3]{mane1987}. 
Let $U$ and $U'$ be local unstable manifolds, close and small enough so that one can define a Poincar\'e map $\phi\colon U\to U'$. 
Denote $I$ for the complex structure of $X$. We will use $I_y$ to denote the real linear map $T_{\R,y}X\to T_{\R,y}X$ that it 
induces at a point $y\in X$. 
Define $[D\phi,I]$, a family of maps $[D\phi,I]_x\colon T_xU\to T_{\phi(x)}U'$, as
\[[D\phi,I]_x := D_x\phi\circ I_x - I_{\phi(x)}\circ D_x\phi.\]
If $[D\phi,I]=0$ can be shown, then it shows that Poincar\'e maps 
are holomorphic, showing the desired claim.

This goal setup is encoded into the following
\begin{proposition}
  \label{lem:17}
  Let $U,U'$ be local unstable manifolds, not intersecting with one another, 
  and close enough to induce a Poincar\'e map $\phi\colon U\to U'$. 
  Then $[D\phi,I]_x=0$ for all $x\in U$.
\end{proposition}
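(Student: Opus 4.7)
The plan is to use a dynamical conjugation to relate $[D\phi,I]$ to the analogous commutators for the Poincar\'e maps between forward iterates of the two unstable transversals, and then argue that these iterated commutators must vanish in the limit because the forward-iterated leaves become exponentially close.

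First, I would set up the conjugation. Since ${f}$ preserves both the unstable foliation $W^+$ and the stable foliation $W^-$, the images $f^N(U)$ and $f^N(U')$ are again unstable leaves and, for each $N\geq 0$, the stable leaf through $f^N(x)$ meets $f^N(U')$ transversally at the single point $f^N(\phi(x))$. Hence a Poincar\'e map $\phi_N\colon f^N(U)\to f^N(U')$ is defined and satisfies the conjugacy $\phi_N\circ f^N=f^N\circ\phi$. Differentiating at $x$ and using that $Df^{\pm N}$ commutes with $I$ pointwise (because ${f}$ is holomorphic), I obtain
\[[D\phi,I]_x = Df^{-N}\circ [D\phi_N,I]_{f^N(x)}\circ Df^N.\]

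Next, I would take operator norms with respect to $\omega_0$. Both $D\phi$ and $D\phi_N$ send $E^+$ into $E^+$, and Proposition \ref{lem:16} yields $\|Df^N|E^+\|_{op}=e^{Nh/2}$ as well as $\|Df^{-N}|E^+\|_{op}=e^{-Nh/2}$. These factors cancel in the conjugation, leaving the pointwise bound
\[\|[D\phi,I]_x\|_{op}\leq\|[D\phi_N,I]_{f^N(x)}\|_{op}\qquad\text{for every }N>0.\]
It therefore suffices to show that the right-hand side tends to zero along some subsequence $N_j\to\infty$.

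For this I would exploit that, by Proposition \ref{lem:16} applied on $E^-$, the stable arc from $f^N(x)$ to $f^N(\phi(x))$ has length at most $e^{-Nh/2}$ times the initial length, so $f^N(U)$ and $f^N(U')$ become exponentially close as holomorphic submanifolds (each leaf is holomorphic by \cite[Lemma 2.1]{ghys1995}). Using compactness of $X$, I would pass to a subsequence with $f^{N_j}(x)\to x_\infty$ and fix a holomorphic chart near $x_\infty$ adapted to $E^+_{x_\infty}\oplus E^-_{x_\infty}$, in which $f^{N_j}(U)$ and $f^{N_j}(U')$ are graphs of holomorphic maps whose $C^1$ difference tends to zero. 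In such a chart the Poincar\'e map $\phi_{N_j}$, defined by sliding along the $C^\infty$ stable foliation transverse to $E^+$, becomes $C^1$-close to the ``vertical'' inclusion of one close holomorphic graph into another (which is holomorphic), forcing $[D\phi_{N_j},I]_{f^{N_j}(x)}\to 0$.

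The hard part will be precisely this last quantitative estimate: turning the qualitative closeness of the two iterated leaves into a genuine bound on $[D\phi_N,I]$. It requires uniform control of the holomorphic charts along the orbit $\{f^N(x)\}$ (supplied by compactness of $X$ and $C^\infty$-smoothness of $E^\pm$), together with care in handling the fact that although the stable foliation is only $C^\infty$, its individual leaves are holomorphic, so the holonomy between close holomorphic transversals is close to being complex-linear. Once this estimate is in place, the pointwise bound of the previous paragraph forces $\|[D\phi,I]_x\|_{op}=0$ for every $x\in U$, which is the claim.
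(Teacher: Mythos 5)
Your high-level strategy is the same as the paper's: conjugate $\phi$ by $f^N$, use the uniform hyperbolicity of Proposition \ref{lem:16} to cancel the $e^{\pm Nh/2}$ factors, and then show that the iterated commutator $[D\phi_N,I]_{f^N(x)}$ goes to zero. However, there is a genuine gap in how you control the iterates. Passing to a subsequence with $f^{N_j}(x)\to x_\infty$ is all that compactness of $X$ gives you; it does \emph{not} let you place $x_\infty$ in $X\setminus E$. If $x_\infty\in E$, the metric $\omega_0$ degenerates there, the distributions $E^\pm$ are not defined at $x_\infty$, and the whole ``holomorphic chart adapted to $E^+_{x_\infty}\oplus E^-_{x_\infty}$'' ceases to make sense, so the norm cancellation and the closeness-of-leaves estimate both break down. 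The paper sidesteps this by using ergodicity of the volume (the Green measure, by Proposition \ref{lem:ergodicity}): it fixes a single precompact $V'\Subset V\Subset X\setminus E$ once and for all, invokes Birkhoff's theorem (Lemma \ref{lem:19}) to get infinitely many $N$ with $f^N(x)\in V'$ for $\mathrm{vol}$-a.e.\ $x$, concludes $[D\phi,I]_x=0$ for $\mathrm{vol}$-a.e.\ $x$, and then uses continuity of $x\mapsto[D\phi,I]_x$ to promote this to all $x\in U$. Your write-up skips the a.e.-then-continuity step and claims the result ``for every $x\in U$'' directly, which the subsequence argument does not support.

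The other difference is your chart choice, and here the paper's route is cleaner where yours is left vague. You work in a holomorphic chart and try to compare $\phi_N$ with a ``vertical inclusion'' between nearby holomorphic graphs, which is precisely the step you flag as the hard part. The paper instead uses a $C^\infty$ foliated chart for \emph{both} $E^+$ and $E^-$ simultaneously (possible since they are $C^\infty$ distributions); in that chart the Poincar\'e map is a coordinate translation, so $D\phi$ is literally the identity matrix and $[D\phi,I]_x=I_x-I_{\phi(x)}$ exactly (Lemma \ref{lem:18}). Combined with a local Lipschitz bound $\|I_p-I_q\|\leq C\,\mathrm{dist}(p,q)$ and the exponential stable contraction, this closes the estimate in one line, avoiding any discussion of holonomy versus graph transforms. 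So even setting aside the $E$-issue, your sketch trades a clean algebraic identity for a quantitative holonomy estimate you would still have to carry out.
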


\subsubsection{On a Foliated Chart}

Fix a chart $(V,\mathbf{x})$ as described in Proposition \ref{lem:doubly-foliating-chart}. 
Now suppose $x$ and $\phi(x)$ are in $V$. 
Then a neighborhood of $x\in U$ and $\phi(x)\in U'$ are laid along the
coordinate directions for $E^-$, 
and hence $\phi$ near $x$ is described as a translation of stable coordinates.
This concludes that $D\phi$ is represented as the identity matrix,
with respect to unstable coordinates on $U,U'$.

The complex structure $I$ is understood as a family of linear maps on 
real tangent spaces $T_{\R,y}X\to T_{\R,y}X$. So if we fix coordinates, 
say the foliated coordinates on $V$, we have a matrix representation 
$I_y\in\mathrm{GL}(4n,\R)$ for each $y\in V$.
Collecting these remarks, we conclude as follows.

\begin{lemma}
  \label{lem:18}
  Let $U,U'$ be local unstable manifolds as in Proposition \ref{lem:17}, 
  and suppose $x\in U$ is such that $x,\phi(x)\in V$.
  For the matrix representation $\{I_y\}_{y\in V}$ of the 
  complex structure $I$ on $V$, we have
  \begin{equation}
    \label{eqn:8.1}
    [D\phi,I]_x = I_x - I_{\phi(x)}.
  \end{equation}
\end{lemma}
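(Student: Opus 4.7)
The claim is essentially tautological once one unwinds what it means for $(V,\mathbf{x})$ to be a foliated chart simultaneously adapted to both $E^+$ and $E^-$. My plan is as follows.

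First, I would split the foliated coordinates as $\mathbf{x} = (u,s)$, where $u \in \R^{2n}$ parametrizes plaques of the unstable foliation $W^+$ integrating $E^+$, and $s \in \R^{2n}$ parametrizes plaques of the stable foliation $W^-$ integrating $E^-$. After shrinking $V$ if necessary, each local unstable manifold through a point $y \in V$ appears as the slice $\{s = s(y)\}$, and each local stable manifold as $\{u = u(y)\}$.

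Since $U, U' \subset V$ are local unstable manifolds, they are represented in this chart as two distinct slices $\{s = c\}$ and $\{s = c'\}$. The Poincar\'e map $\phi\colon U \to U'$ is defined by sliding along the stable foliation, so in coordinates it simply sends $(u,c) \mapsto (u,c')$; that is, $\phi$ is a pure translation in the transverse $s$ direction. Consequently $D_x\phi$ is represented, with respect to the coordinate frames $\{\partial/\partial u_i, \partial/\partial s_j\}$ at both source and target, by the identity matrix in $\mathrm{GL}(4n,\R)$.

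Plugging this into the commutator, and using the matrix representation $\{I_y\}_{y\in V}$ of the complex structure on the chart, gives
\[
[D\phi, I]_x \;=\; D_x\phi \circ I_x \;-\; I_{\phi(x)} \circ D_x\phi \;=\; I_x - I_{\phi(x)},
\]
which is exactly \eqref{eqn:8.1}. I do not anticipate any real obstacle here: the content of the lemma is just the observation that, in a joint foliated chart, holonomy along one foliation is a coordinate shift, so the failure of $\phi$ to be holomorphic collapses to a pointwise difference of the complex structure matrices $I_x$ and $I_{\phi(x)}$. The genuine analytic work -- producing a joint $C^\infty$ foliated chart for $E^+$ and $E^-$ in the first place -- has already been absorbed into the smoothness of $E^\pm$ established on the way to Proposition \ref{lem:16}.
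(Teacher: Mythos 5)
Your proof is correct and is essentially the paper's own argument, only spelled out more explicitly: the paper also fixes a joint foliated chart for $E^+$ and $E^-$, observes that the Poincar\'e map is a coordinate shift in the transverse ($E^-$) direction so that $D\phi$ is the identity in the chart's frame, and then the commutator collapses to $I_x - I_{\phi(x)}$. The one detail you make cleaner than the paper is that $D_x\phi$ must be read as the differential of the extension of $\phi$ to the full translation $(u,s)\mapsto(u,s+(c'-c))$ in the chart, so that $[D\phi,I]_x$ is a genuine map $T_{\R,x}X\to T_{\R,\phi(x)}X$; this is implicit in the paper and you do well to flag it.
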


Furthermore, shrinking $V$ if necessary, we assume that the family 
$\{I_y\}_{y\in V}$ satisfies the Lipschitz condition, i.e.,
\begin{equation}
  \label{eqn:8.2} 
  \|I_p-I_q\|\leq C\cdot\mathrm{dist}(p,q).
\end{equation}

\subsubsection{From Ergodicity}

Apparently, $(V,\mathbf{x})$ is set on an arbitrary place of $X$, 
thus we hardly have $x,\phi(x)\in V$ in general. 
We have shown that, in Proposition \ref{lem:ergodicity}, the Green measure (the unique measure of maximal entropy) is ergodic 
with respect to ${f}\colon X\to X$, and we have assumed that the Green measure 
is the volume measure. We then claim the following.
\begin{lemma}
  \label{lem:19}
  Let $U,U'$ be local unstable manifolds as in Proposition \ref{lem:17}. Then for $\mathrm{vol}$-a.e. $x$, there are infinitely many $N$ such that ${f}^N(x),{f}^N\phi(x)\in V$.
\end{lemma}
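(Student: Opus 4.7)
The plan is to reduce the simultaneous-return requirement to a single-orbit recurrence, exploiting that $\phi$ shifts along the stable foliation and that $f$ contracts it uniformly. First I pick an open set $V_0 \Subset V$ with $\mathrm{vol}(V_0) > 0$ and $\mathrm{dist}(V_0, X\setminus V) \geq r > 0$ in the hyperk\"ahler metric $\omega$. Under the running hypothesis that $\mathrm{vol}$ is the measure of maximal entropy, Proposition \ref{lem:ergodicity} identifies $\mu = \mathrm{vol}$ as mixing, hence ergodic for $f$, and Birkhoff's ergodic theorem applied to the indicator $\mathbf{1}_{V_0}$ gives, for $\mathrm{vol}$-a.e.\ $z \in X$, that the forward orbit $\{f^N(z)\}_{N\geq 0}$ enters $V_0$ with asymptotic frequency $\mathrm{vol}(V_0) > 0$, in particular infinitely often.

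The second step transfers this single-point recurrence from a $\mathrm{vol}$-full subset of $X$ to a leafwise-Lebesgue-full subset of the transversal $U$. Because $E^+$ is $C^\infty$ (see the remark after Proposition \ref{lem:16}), the unstable foliation on $X\setminus E$ is $C^\infty$, and a foliated chart exhibits $\mathrm{vol}$ locally as a smooth positive density against the product of leafwise and transverse Lebesgue measures. By a Fubini argument, the $\mathrm{vol}$-null ``bad set'' from Birkhoff meets leafwise-Lebesgue-a.e.\ point of a.e.\ unstable leaf; after a harmless transverse shift of the transversal (or by picking $U$ generically to begin with, which is enough to conclude Proposition \ref{lem:17} by continuity) this dichotomy holds on $U$ itself.

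The third step uses the geometric content of the Poincar\'e map: by construction $\phi$ is the stable-holonomy map between the unstable transversals $U,U'$, so $\phi(x) \in W^s_{\mathrm{loc}}(x)$. Uniform hyperbolicity (Proposition \ref{lem:16}) then yields
\[
  \mathrm{dist}\bigl(f^N(x),\ f^N\phi(x)\bigr) \leq C\, e^{-Nh/2}\, \mathrm{dist}(x,\phi(x))
\]
for some $C>0$. I fix $N_0$ large enough that this bound is below $r$ for every $N\geq N_0$; then for each of the infinitely many $N\geq N_0$ with $f^N(x)\in V_0$, the estimate forces $f^N\phi(x)\in V$ as well, proving the lemma. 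The main obstacle is the middle step: transferring a $\mathrm{vol}$-a.e.\ statement on $X$ to a leafwise-a.e.\ statement on the single transversal $U$. This is precisely where the already-proven smoothness of the unstable foliation rescues us from invoking Pesin's absolute continuity theorem directly.
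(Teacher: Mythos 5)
Your proposal captures the paper's argument: pick a precompact $V_0 \Subset V$ with a positive distance buffer $r$ to $\partial V$, invoke Birkhoff recurrence (ergodicity of $\mathrm{vol}$ from Proposition~\ref{lem:ergodicity}) to get infinitely many returns of the $f$-orbit to $V_0$ for $\mathrm{vol}$-a.e.\ $x$, and use the uniform contraction of the stable direction (Proposition~\ref{lem:16}) to pin $f^N\phi(x)$ within $r$ of $f^N(x)$ once $N$ is large, so both land in $V$. That is precisely the paper's chain of reasoning, with $V_0$ playing the role of the paper's $V'$ and $r$ the role of $\epsilon$; the paper passes through $\mathrm{dist}_{E^-}$ and the bound $\mathrm{dist}\leq\mathrm{dist}_{E^-}$, while you use the ambient distance directly, which is equivalent.

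The one place you diverge is your ``second step,'' the Fubini-type transfer from a $\mathrm{vol}$-full subset of $X$ to a leafwise-full subset of the transversal $U$. That step does not appear in the paper's proof of this lemma, and it is not needed to prove the lemma as stated: the conclusion is literally a $\mathrm{vol}$-a.e.\ statement on $X$, with $\phi$ understood as the stable holonomy defined on a full-measure foliated box rather than on a single measure-zero transversal. The concern you raise --- that a $\mathrm{vol}$-full set can entirely miss a fixed $U$ --- is real, but it belongs to the downstream use of the lemma, and the paper resolves it differently: in the proof of Proposition~\ref{lem:17} it shows $[D\phi,I]_x=0$ for $\mathrm{vol}$-a.e.\ $x$ and then promotes this to all $x$ by continuity of $x\mapsto[D\phi,I]_x$. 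Your Fubini argument (justified by the already-established $C^\infty$ smoothness of $E^+$, so no appeal to Pesin absolute continuity is needed) is a legitimate alternative way to bridge that gap, arguably more explicit, but it is redundant given the continuity endgame the paper already has in place. Either route is sound.
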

\begin{proof}
  Let $V'\Subset V$ be a nonempty precompact open subset.
  Let $\epsilon=\inf_{y\in V'}\mathrm{dist}(y,\partial V)$ be the minimum (K\"ahler) distance from $V'$ to the complement of $V$, which is a positive number.

  By the uniform contraction of $E^-$ along ${f}$, we have
  \begin{equation}
    \label{eqn:8.3} 
    \mathrm{dist}_{E^-}({f}^N(x),{f}^N\phi(x))\leq C(U,U',V)\cdot e^{-Nh/2}\mathrm{dist}_{E^-}(x,\phi(x)),
  \end{equation}
  where $\mathrm{dist}_{E^-}$ is the distance measured along the stable leaf, and $C(U,U',V)>0$ is a constant depending only on $U,U'$ and $V$.
  As $\mathrm{dist}(p,q)\leq\mathrm{dist}_{E^-}(p,q)$, 
  this tells that $\mathrm{dist}({f}^N(x),{f}^N\phi(x))<\epsilon$ 
  for sufficiently large $N\geq N_0$. 
  In particular, if ${f}^N(x)\in V'$, $N\geq N_0$, then 
  ${f}^N(x),{f}^N\phi(x)\in V$.
  
  Now because $V'$ is a nonempty open set, it has a nonzero volume. 
  Thanks to Birkhoff ergodicity, we conclude that there are infinitely many
  $N$ such that ${f}^N(x)\in V'$ for $\mathrm{vol}$-a.e. $x$. The lemma then follows.
\end{proof}

\subsubsection{Future Estimates}

To prove Proposition \ref{lem:17}, we use the trick of `sending to the future,' 
as commonly seen in \cite[Theorem 2.2]{ghys1995} and 
\cite[Theorem III.3.1]{mane1987}.
The trick starts from the following decomposition:
\begin{equation}
  \label{eqn:8.4} 
  [D\phi,I]_x=D_{{f}^N\phi(x)}{f}^{-N}\circ [D({f}^N\phi {f}^{-N}),I]_{{f}^N(x)}\circ D_x{f}^N.
\end{equation}
We then estimate each factor:
$(D{f}^{-N}|T{f}^N(U'))=D{f}^{-N}|E^+$, $(D{f}^N|TU)=D{f}^N|E^+$, 
and $[D({f}^N\phi {f}^{-N}),I]$. (Below, $C_1,C_2>0$ are constants that only depend on the Poincar\'e map $\phi$.)

\begin{itemize}
  \item For $D{f}^{-N}|E^+$, we use Proposition \ref{lem:16} to have that $D{f}^{-1}$ is (under $\omega_0$)
  uniformly contracting on $E^+$ with the rate $e^{-h/2}$. Applying such, we get
  \begin{equation}
    \label{eqn:8.5} 
    \|D{f}^{-N}|E^+\|\leq C_1\cdot e^{-Nh/2}.
  \end{equation}

  \item For $D{f}^N|E^+$, we use Proposition \ref{lem:16} to have that $D{f}$ is (under $\omega_0$)
  uniformly expanding on $E^+$ with the rate $e^{h/2}$. Applying such, we get
  \begin{equation}
    \label{eqn:8.6} 
    \|D{f}^N|E^+\|\leq C_2\cdot e^{Nh/2}.
  \end{equation}

  \item Finally, for $[D({f}^N\phi {f}^{-N}),I]$, we pick $N$ such that
  ${f}^N(x),{f}^N\phi(x)\in V$ by Lemma \ref{lem:19}. (Note that this may be done only for $\mathrm{vol}$-a.e. $x$.)
  
  The map ${f}^N\phi {f}^{-N}$ is a Poincar\'e map ${f}^N(U)\to {f}^N(U')$ along the stable direction.
  Thus Lemma \ref{lem:18} applies to give,
  \begin{align}
    \left\|[D({f}^N\phi{f}^{-N}),I]_{{f}^N(x)}\right\| &= \left\|I_{{f}^N(x)} - I_{{f}^N\phi(x)}\right\| \nonumber \\
    &\leq C\cdot\mathrm{dist}({f}^N(x),{f}^N\phi(x)). \nonumber
    \intertext{Via the estimate \eqref{eqn:8.3}, we further estimate,}
    &\leq C\cdot\mathrm{dist}_{E^-}({f}^N(x),{f}^N\phi(x)) \nonumber \\
    &\leq C\cdot e^{-Nh/2}\mathrm{dist}_{E^-}(x,\phi(x)). \label{eqn:8.7} 
  \end{align}
\end{itemize}

Combining all three estimates \eqref{eqn:8.5}, \eqref{eqn:8.6}, and 
\eqref{eqn:8.7}, we obtain, in \eqref{eqn:8.4},
\[\|[D\phi,I]_x\|\leq C_1C_2C\cdot e^{-Nh/2}\mathrm{dist}_{E^-}(x,\phi(x)),\]
whenever $N$ satisfies ${f}^N(x),{f}^N\phi(x)\in V$.
For $\mathrm{vol}$-a.e. $x$, there are infinitely many such $N$'s. Sending $N\to\infty$, 
we have $[D\phi,I]_x=0$, $\mathrm{vol}$-a.e. $x$. Appealing to the continuity of $x\mapsto[D\phi,I]_x$, we prove the Proposition \ref{lem:17}.

\subsection{Flatness}
\label{sec:flatness}

Thanks to the holomorphicity of stable and unstable foliations, we have the following flatness result. This serves as a key ingredient to infer that the initial manifold $X$ is induced from a torus.

We note that we can apply the proof below for K3 surfaces and get a shorter proof of some known facts like \cite[Proposition 3.2.1]{FT18}.

\begin{proposition}
  \label{lem:20}
  The metric $\omega_0$ on $X\setminus E$ is flat.
\end{proposition}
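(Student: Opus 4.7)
The plan is to expose a local K\"ahler product structure for $\omega_0$, and then to use a curvature-scaling argument under Poincar\'e recurrence to conclude flatness. I proceed in three steps.

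\emph{Step 1: Extracting closed basic forms.} By the pointwise $\omega_0$-orthogonality of $E^+$ and $E^-$ (Corollary \ref{lem:14}), the real $(1,1)$-forms $\alpha^\pm$ on $X\setminus E$ defined by ``$\alpha^+$ agrees with $\omega_0$ on $E^+$ and vanishes on $E^-$'' (and symmetrically for $\alpha^-$) are well-defined. Lemma \ref{lem:15} then yields
\[\omega_0 = \alpha^+ + \alpha^-,\qquad {f}^\ast\omega_0 = e^h\alpha^+ + e^{-h}\alpha^-.\]
Inverting this $2\times 2$ linear system gives
\[\alpha^+ = \frac{{f}^\ast\omega_0 - e^{-h}\omega_0}{e^h - e^{-h}},\qquad \alpha^- = \frac{e^{h}\omega_0 - {f}^\ast\omega_0}{e^h - e^{-h}},\]
so each $\alpha^\pm$ is closed, since both $\omega_0$ and ${f}^\ast\omega_0$ are K\"ahler.

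\emph{Step 2: Local K\"ahler product structure.} Both $W^\pm$ are holomorphic foliations by Proposition \ref{lem:17}. Holomorphic Frobenius applied to the transverse pair $W^+, W^-$ gives, around any $x\in X\setminus E$, local holomorphic coordinates $(z^+_1,\ldots,z^+_n,z^-_1,\ldots,z^-_n)$ with $E^+=\Span(\partial/\partial z^+_\mu)$ and $E^-=\Span(\partial/\partial z^-_\mu)$. In these coordinates the vanishing of $\alpha^+$ on $E^-$ forces $\alpha^+ = \sum_{\mu,\nu}\alpha^+_{\mu\bar\nu}\,dz^+_\mu\wedge d\bar z^+_\nu$. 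Expanding $d\alpha^+=0$ and isolating the coefficients of basis forms carrying a $dz^-$ or $d\bar z^-$ leg yields $\partial\alpha^+_{\mu\bar\nu}/\partial z^-_\rho = \partial\alpha^+_{\mu\bar\nu}/\partial \bar z^-_\rho = 0$ identically. Hence $\alpha^+$ depends only on $(z^+,\bar z^+)$; symmetrically, $\alpha^-$ depends only on $(z^-,\bar z^-)$. Therefore $\omega_0 = \alpha^+(z^+)+\alpha^-(z^-)$ is a local K\"ahler product.

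\emph{Step 3: Flatness via Poincar\'e recurrence.} The local product decomposition splits the curvature as $R_{\omega_0}=R^+\oplus R^-$, where $R^\pm$ are the curvatures of the K\"ahler factors $\alpha^\pm$ (and mixed components vanish). Since ${f}$ preserves both foliations, the restriction ${f}|_{W^+_x}\colon W^+_x\to W^+_{{f}(x)}$ satisfies ${f}^\ast(\alpha^+|_{W^+_{{f}(x)}})=e^h\,\alpha^+|_{W^+_x}$ and is therefore a K\"ahler similarity with constant dilation factor $e^{h/2}$. Such a similarity preserves the curvature as a $(1,3)$-tensor but scales its pointwise norm: $|R^+|({f}(x))=e^{-h}|R^+|(x)$, and iteratively $|R^+|({f}^n(x))=e^{-nh}|R^+|(x)$. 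The $f$-invariant volume measure is ergodic (Proposition \ref{lem:ergodicity}), so vol-a.e. $x\in X\setminus E$ is Poincar\'e-recurrent: along some subsequence ${f}^{n_k}(x)\to x$, and continuity of $|R^+|$ on $X\setminus E$ forces $|R^+|(x)=0$. Hence $R^+\equiv 0$ on a dense subset of $X\setminus E$, and by continuity everywhere. The symmetric argument applied to ${f}^{-1}$ gives $R^-\equiv 0$. Therefore $R_{\omega_0}\equiv 0$ on $X\setminus E$.

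The main technical difficulty lies in Step 2: one must convert the closedness of $\alpha^\pm$ (itself a consequence of the $f$-equivariant decomposition in Step 1) into a genuine product structure via holomorphic Frobenius and a coordinate computation. Steps 1 and 3 are then fairly mechanical, though one should verify carefully that Poincar\'e recurrence survives the incompleteness of $\omega_0$ near $E$ --- which it does, since $E$ is ${f}$-invariant and has zero volume measure.
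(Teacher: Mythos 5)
Your proof is correct, and Steps 1--2 reach the same local K\"ahler product structure $\omega_0 = \omega_0^+(z^+) + \omega_0^-(z^-)$ that the paper establishes (the paper derives orthogonality of $E^\pm$ directly from the relation ${f}^\ast\omega_0 = e^{\pm h}\omega_0$ on $E^\pm$, then uses $d\omega_0=0$ just as you do). The real difference is Step 3. The paper closes by observing that the Levi--Civita connection of $\omega_0$ preserves $E^\pm$, satisfies $\nabla\Omega=0$, and has the prescribed torsion-free behaviour across the splitting; it then invokes \cite[Lemme 3.4.4]{BFL92} for uniqueness and \cite[Lemme 2.2.3(b)]{BFL92} for flatness. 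You instead exploit that ${f}$ restricted to an unstable leaf is a homothety of ratio $e^{h/2}$ with respect to $\alpha^+$, so that the leafwise curvature norm obeys $|R^+|({f}^n x)=e^{-nh}|R^+|(x)$; Poincar\'e recurrence of the ${f}$-invariant, ergodic volume on the ${f}$-invariant full-measure set $X\setminus E$ then forces $|R^+|\equiv 0$, and the mirror argument kills $R^-$. Your route is more elementary and self-contained (it does not use the holomorphic symplectic condition $\nabla\Omega=0$ at all, only the product structure and the constant dilation), while the paper's is quicker once one accepts the Benoist--Foulon--Labourie rigidity machinery. The one point you flag --- that recurrence survives incompleteness --- is handled correctly: $E$ is ${f}$-invariant and Zariski-closed of measure zero, so vol-a.e.\ orbit stays in $X\setminus E$ and recurrence applies there verbatim. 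Both proofs are valid; yours is a genuine and arguably cleaner alternative to the [BFL92] citation.
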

\begin{proof}
  Fix a point $x\in X\setminus E$. 
  We know that $E^-$ and $E^+$ are holomorphic distributions.
  By a construction similar to the proof of Proposition \ref{lem:doubly-foliating-chart}, 
  one can find a holomorphic chart $(w^+_1,\ldots,w^+_n,w^-_1,\ldots,w^-_n)$ near $x$ such that  
  we have $E^\pm=\bigcap_{i=1}^n\ker(dw^\pm_i)$. 

Moreover, it is easy to check that $E^-$ and $E^+$
are orthogonal under $\omega_0$, as follows. For $v\in E^-$ and $w\in E^+$, we get
\[e^{-h}\omega_0(v,w)={f}^\ast\omega_{0}(v,w)=-{f}^\ast\omega_{0}(w,v)=-e^{h}\omega_0(w,v)=e^{h}\omega_0(v,w),\]
and thus $\omega_0(v,w)=0$. Therefore one can re-write $\omega_0$ as
\[\omega_0 = \frac{\sqrt{-1}}{2}\sum_{i,j=1}^n\left(a_{i\overline{j}}\, dw_i^+\wedge d\overline{w}_j^+ + b_{i\overline{j}}\, dw_i^-\wedge d\overline{w}_j^-\right),\]
with some positive-definite matrix-valued functions
$(a_{i\overline{j}})$ and $(b_{i\overline{j}})$.

That $d\omega_0=0$ then implies that the 
$w_1^-,\overline{w}_1^-,\cdots,w_n^-,\overline{w}_n^-$-derivatives of $a_{i\overline{j}}$ shall vanish and the 
$w_1^+,\overline{w}_1^+,\cdots,w_n^+,\overline{w}_n^+$-derivatives of $b_{i\overline{j}}$ shall vanish.
Consequently, $\omega_0$ is split completely into:
\[\omega_0 = \omega_0^-(w_1^+,\cdots,w_n^+) + \omega_0^+(w_{1}^-,\cdots,w_{n}^-).\]
In short, the metric $\omega_0$ decomposes as $\omega_0=\omega_0^-\times\omega_0^+$ (locally).
%

Now consider the Levi-Civita connection $\nabla$ of the metric $\omega_0$.
This connection satisfies the followings:
\begin{enumerate}
  \item $\nabla\Omega=0$.
As $\omega_k$'s satisfy this, and $\nabla\Omega=0$ is expressed with Christoffel
symbols of the metric, that $\omega_k\to\omega_0$ in $C^\infty_{\mathrm{loc}}$
certifies this for $\omega_0$ as well.
  \item $\nabla E^+\subset E^+$ and $\nabla E^-\subset E^-$.
This follows from the local factorization $\omega_0=\omega_0^-\times \omega_0^+$, 
where each $\omega^\pm_0$ is supported on $E^\pm$, respectively.
  \item For vector fields $Z^+$ on $E^+$ and $Z^-$ on $E^-$, the following holds:
  \begin{align*}
    \nabla_{Z^-}Z^+ &= p^+([Z^-,Z^+]), \\
    \nabla_{Z^+}Z^- &= p^-([Z^+,Z^-]),
  \end{align*}
  where $p^\pm$ denote the parallel projections $E^-\oplus E^+\to E^\pm$.
These follow from the torsion-free property
$[Z^-,Z^+]=\nabla_{Z^-}Z^+-\nabla_{Z^+}Z^-$,
together with that $\nabla E^\pm\subset E^\pm$ verified above.
\end{enumerate}

According to \cite[Lemme 3.4.4]{BFL92}, connections satisfying all three above
are unique. Now by the proof of \cite[Lemme 2.2.3(b)]{BFL92},
we get that $\nabla$ is a flat connection. Hence $\omega_0$ is flat on $X\setminus E$. 
(The cited theorems are all local, thus the non-compact nature of
$X\setminus E$ does not matter here.)
\end{proof}

\section{Proof of Theorem \ref{lem:00}}
\label{sec:upshots-of-flatness}

Now we are almost ready to prove the Theorem \ref{lem:00}. The final steps of the proof require some complex geometric constructions. In particular, based on some results from \cite{GKP} and \cite{BCL14}, and the fact that $X$ is projective, we establish that our hyperk\"ahler manifold $X$ is a modification of a torus quotient. The automorphism $f$ also turns out to induce a rational self-map on a torus, which could be undefined in a codimension 1 subset. However, this suffices to show that $(X,f)$ is a Kummer example \cite[Lemma 1.25]{LoB17}.

\subsection{Complex Geometric Terms}

We briefly list some complex geometric terms used in the proof below.

We recall the notion of normal varieties $Y$ \cite[Exercise I.3.17]{Hartshorne}. We define its \emph{regular locus} $Y_{\mathrm{reg}}$ as the set of nonsingular points in $Y$ \cite[\S{I.5}, p.32]{Hartshorne}. On the regular locus, one can define the \emph{tangent sheaf} $T|Y_{\mathrm{reg}}$ on it \cite[\S{II.8}, p.180]{Hartshorne}, as $Y_{\mathrm{reg}}$ itself is a nonsingular variety. 

We say a rank $r$ vector bundle $E\to Z$ on a complex algebraic variety $Z$ is \emph{flat} if there exists a representation of the topological fundamental group $\rho\colon\pi_1(Z)\to\mathrm{GL}(r,\C)$ such that $E$ is isomorphic to the bundle $\widetilde{Z}\times\C^r/\pi_1(Z)\to Z$, constructed from the diagonal action $\pi_1(Z)\ACTS\widetilde{Z}\times\C^r$ \cite[Definition 2.3]{claudon2020kahler}. This is an analytic paraphrase of the flatness defined in \cite[Definition 1.15]{GKP}.

We say a morphism $\phi\colon X\to Y$ is \emph{\'etale in codimension one} if there exists a closed subset $Z\subset X$ of codimension $\geq 2$ such that $\phi|_{X\setminus Z}\colon X\setminus Z\to Y$ is \'etale \cite[Definition 3.3]{GKP}. 
Likewise, we say a group $\Gamma$ acts on a variety $Y$ \emph{freely in codimension one} if there is a $\Gamma$-invariant closed subset $Z'\subset Y$ of codimension $\geq 2$ such that $\Gamma$ acts freely on $Y\setminus Z'$.

We say a normal variety has \emph{canonical singularities} in the sense of \cite[Definition 2.11]{KollarMori}. We say a normal variety $Y$ has \emph{klt singularities} if the pair $(Y,\varnothing)$ is a divisorial log terminal singularity in the sense of \cite[Definition 2.37]{KollarMori}. Varieties with canonical singularities also have klt singularities.

We say a projective morphism $\phi\colon X\to Y$ is a \emph{symplectic resolution} if $X$ is a smooth variety carrying a closed nondegenerate global 2-form $\sigma\in H^0(X,\Omega^2_X)$ and $Y$ is a normal variety, following \cite[Definition 2]{wierzba}. We say a morphism $\phi\colon X\to Y$ is \emph{crepant} if no discrepancies appear, i.e., we have $\phi^\ast K_Y=K_X$.

\subsection{Flat Regular Locus implies Torus Quotient}

The following fact states that a normal variety must be a torus quotient if its tangent sheaf on the regular locus has a flat metric.
\begin{theorem}
  \label{lem:21}
  Suppose $Y$ is a normal
  complex projective variety that has klt singularities.
  If the tangent bundle $T|Y_{\mathrm{reg}}$ is flat, then
  $Y$ is a quotient of a complex torus
  by a finite group acting freely in codimension one.
\end{theorem}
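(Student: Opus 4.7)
The plan is to reduce the statement to the uniformization theorem of Greb--Kebekus--Peternell \cite[Corollary 1.16]{GKP}, which classifies klt projective varieties whose reflexive tangent sheaf has trivial $\Q$-Chern classes $c_1$ and $c_2$ as quotients of complex tori by finite groups acting freely in codimension one. The task is therefore to extract the required Chern-class vanishing from the analytic flatness of the tangent bundle on $Y_{\mathrm{reg}}$.

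First, since $Y$ is normal, $\mathrm{Sing}(Y)$ has complex codimension at least two, and the tangent bundle $T|Y_{\mathrm{reg}}$ extends uniquely to the reflexive sheaf $T_Y^{[1]} := (\Omega_Y^{[1]})^\ast$ on $Y$. A flat holomorphic structure on $T|Y_{\mathrm{reg}}$ furnishes a unitary local system realizing that bundle, whose Chern--Weil forms vanish identically. This gives $c_1(T|Y_{\mathrm{reg}})=0$ and $c_2(T|Y_{\mathrm{reg}})=0$ in the de~Rham cohomology of $Y_{\mathrm{reg}}$, and since $\mathrm{Sing}(Y)$ has codimension at least two, these vanishings lift to the $\Q$-Chern classes of $T_Y^{[1]}$ in the sense made precise in \cite{GKP}.

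Second, armed with $c_1(T_Y^{[1]})=0$ and $c_2(T_Y^{[1]})=0$, together with the klt and projectivity hypotheses on $Y$, I would invoke \cite[Corollary 1.16]{GKP} directly. Its conclusion produces a quasi-\'etale cover $\widetilde{Y} \to Y$ whose total space $\widetilde{Y}$ is a complex torus $\mathbb{T}$ and whose Galois group $\Gamma$ acts on $\mathbb{T}$ freely in codimension one; this is precisely the desired presentation $Y = \mathbb{T}/\Gamma$.

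The main obstacle is the promotion of the analytic flat structure on $Y_{\mathrm{reg}}$ to a global statement about the reflexive sheaf $T_Y^{[1]}$ on all of $Y$, specifically the well-definedness and vanishing of its $\Q$-Chern classes. This is where the technical subtlety sits, but it is handled by the extension-across-klt-singularities machinery already developed in \cite{GKP}, so in practice it amounts to careful bookkeeping with reflexive hulls and $\Q$-Chern classes rather than requiring new geometric input on our side.
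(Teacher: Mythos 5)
Your proof is correct and takes essentially the same route as the paper's: both reduce to a direct application of \cite[Corollary 1.16]{GKP} to the klt pair $(Y,\varnothing)$. The only difference is that you interpose a Chern--Weil computation to verify $\hat{c}_1=\hat{c}_2=0$. This is a valid intermediate step: the paper simply cites GKP without spelling it out, and depending on the precise phrasing of GKP's corollary (flatness of $T|_{Y_{\mathrm{reg}}}$ versus vanishing of the orbifold Chern classes, which GKP prove to be equivalent), your verification is either the implicit content of the citation or a harmless detour. One caveat worth making explicit: passing from $c_2=0$ on $Y_{\mathrm{reg}}$ to $\hat{c}_2(T_Y^{[1]})=0$ on $Y$ is not a purely formal reflexive-extension step. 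The orbifold $\hat{c}_2$ is computed after pulling back to a maximal quasi-\'etale cover, and one needs the flat bundle on $Y_{\mathrm{reg}}$ to extend across the remaining (quotient) singularities of that cover; this is GKP's flat-sheaf extension theorem, a substantive ingredient rather than bookkeeping, though you are right that it sits inside the cited machinery. Both you and the paper arrive at the same conclusion by citing the same result.
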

\begin{proof}
  Apply \cite[Corollary 1.16]{GKP} to the klt pair $(Y,\varnothing)$. As $Y$ is projective, there exists an abelian variety $\mathbb{T}$ and a finite Galois morphism $\pi\colon\mathbb{T}\to Y$ that is \'etale in codimension one. This yields a finite group $\Gamma\subset\mathrm{Aut}(\mathbb{T})$ such that $\pi$ is the quotient morphism $\mathbb{T}\to\mathbb{T}/\Gamma=Y$ (cf. \cite[Definition 3.6]{GKP}).
\end{proof}

In what follows, we construct the normal variety $Y$ to plug in the above
Theorem \ref{lem:21}.
Morally, it is constructed by contracting $E\subset X$ by a contraction
$\phi\colon X\to Y$. Such construction requires
$X$ to be projective.

\subsubsection{Remark}
A result by Claudon, Graf, Guenancia, and Naumann \cite[Theorem D]{claudon2020kahler} gives rise to a generalization of \cite[Corollary 1.16]{GKP}, applicable for compact K\"ahler normal complex spaces with klt singularities (i.e., drops the projectivity assumption). Thus Theorem \ref{lem:21} may be extended to the case of a non-projective $Y$. Nonetheless, in this paper we still need $X$ to be projective to construct $Y$.

\subsection{Construction of the Contraction}

We now aim to prove the following
\begin{proposition}
  \label{lem:contraction-construct}
  There exists a contraction $\phi\colon X\to Y$ such that $Y$ is a normal projective variety and its regular locus $Y_{\mathrm{reg}}$ is the image of $X\setminus E$. Moreover, $Y$ has canonical singularities and has a K\"ahler current $\phi_\ast\omega_0$ on $Y_{\mathrm{reg}}$ which is a flat metric.
\end{proposition}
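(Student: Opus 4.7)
The plan has three stages: construct the contraction $\phi\colon X\to Y$ via the base-point-free theorem, identify $Y_{\mathrm{reg}}$ with $\phi(X\setminus E)$ and certify the singularity type, and transfer $\omega_0$ to $Y_{\mathrm{reg}}$ as a flat K\"ahler current.

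To produce the contraction, I would exploit projectivity together with the structure theory of K\"ahler cones of projective hyperk\"ahler manifolds (Markman--Huybrechts): the nef cone is locally rational polyhedral inside the positive cone of $q$, so $[\eta_+]+[\eta_-]$ lies in the relative interior of some rational face. Pick a rational big-and-nef class $L\in\mathrm{NS}(X)_{\Q}$ in the same relative interior; by Nakamaye's theorem the augmented base locus $\mathbf{B}_+(L)$ coincides with the null locus of the face, which is precisely $E$. Clearing denominators makes $L$ Cartier, and since $K_X=0$, Kawamata's base-point-free theorem gives that $|mL|$ is base-point-free for $m\gg 0$. Stein factorization of the resulting morphism $X\to\Pp^N$ yields $\phi\colon X\to Y$ onto a normal projective $Y$ with $\phi_\ast\mathcal{O}_X=\mathcal{O}_Y$, contracting precisely $E$.

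For the singularities, $\phi|_{X\setminus E}$ is a biholomorphism onto $Y\setminus\phi(E)$, so $Y_{\mathrm{reg}}\supseteq\phi(X\setminus E)$, with the reverse containment holding because singularities of $Y$ can only appear in the image of the contracted locus. Since $X$ is smooth with $K_X=0$ and $\phi^\ast K_Y=K_X$, the contraction is crepant, forcing $Y$ to have canonical (in fact klt) singularities by standard results on crepant contractions from smooth varieties. Finally, because $\phi$ restricts to a biholomorphism onto $Y_{\mathrm{reg}}$, the pushforward $\phi_\ast\omega_0$ is a smooth K\"ahler form on $Y_{\mathrm{reg}}$, flat by Proposition \ref{lem:20}; one extends it as a K\"ahler current on all of $Y$ using the boundedness of potentials for $\omega_0$ near $E$ provided by \cite{CT15}.

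The main obstacle is the first step: ensuring the existence of a rational big-and-nef class $L$ whose augmented base locus coincides \emph{exactly} with $E$, rather than a strict subset or superset. The classes $[\eta_\pm]$ are genuinely irrational (their eigenvalues $\lambda^{\pm 1}$ are typically Salem-like numbers), so rational approximations do not land on $E$ automatically. Overcoming this requires both the rationality of faces of the K\"ahler cone (a nontrivial structural input from hyperk\"ahler geometry) and Nakamaye's theorem identifying $\mathbf{B}_+$ with the null locus; without one of these, the passage from abstract positivity of $[\eta_+]+[\eta_-]$ to a base-point-free integral linear system that cuts out exactly $E$ would fail.
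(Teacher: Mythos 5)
Your strategy matches the paper's in broad outline: exploit rational polyhedrality of the nef cone to replace the irrational class $\alpha=[\eta_+]+[\eta_-]$ by rational big-and-nef line bundles, invoke Kawamata's base-point-free theorem to obtain a contraction $\phi\colon X\to Y$, identify $\mathrm{Exc}(\phi)$ with $E$, and push $\omega_0$ forward. However there is a genuine gap where you identify $\mathbf{B}_+(L)$ with $E$. Nakamaye's theorem yields $\mathbf{B}_+(L)=\mathrm{Null}(L)$, but nothing in your sketch establishes $\mathrm{Null}(L)=\mathrm{Null}(\alpha)$, and your phrase ``the null locus of the face'' silently assumes that every big-and-nef class in the relative interior of the face has the same null locus --- which is exactly the thing that needs proof, and is not a consequence of Nakamaye alone. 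The paper proves it in Lemma~\ref{lem:22} by writing $\alpha=\sum a_i c_1(L_i)$ with the $L_i$ big-and-nef on the same face, using the characterization in \cite{BCL14} of $X\setminus\mathbf{B}_+(L_i)$ as the maximal Zariski-open set on which $\Phi_{mL_i}$ restricts to an isomorphism (so the $\mathbf{B}_+(L_i)$ all agree, since the contractions have isomorphic images), and then sandwiching $\mathrm{Null}(\alpha)$ between augmented base loci by a multinomial-positivity expansion. If you want to keep the single-rational-class formulation, you still need an argument of this kind; Nakamaye plus rationality of faces does not close the loop by itself.

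A smaller issue: your crepancy argument is circular as phrased --- you list $\phi^\ast K_Y=K_X$ among the hypotheses and then conclude the map is crepant, but $\phi^\ast K_Y=K_X$ \emph{is} crepancy. What must actually be shown is that this equality holds; the paper cites Wierzba's remark that a birational contraction from a hyperk\"ahler manifold is crepant. Alternatively you can derive it from $K_X\sim 0$ nef and the negativity lemma, which forces all discrepancies of $\phi$ to vanish and hence gives the canonical-singularities conclusion simultaneously. Finally, the last sentence of your proof about extending $\phi_\ast\omega_0$ as a K\"ahler current over all of $Y$ is not needed: the proposition only asserts a flat metric on $Y_{\mathrm{reg}}$, which follows directly from $\phi|_{X\setminus E}$ being a biholomorphism onto $Y_{\mathrm{reg}}$ together with Proposition~\ref{lem:20}.
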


The proof of this fact extensively uses the fact that $X$ is projective. As a preparation, we first show that the eigenclasses $[\eta_+]$ and $[\eta_-]$ are in fact $(1,1)$-classes of a nef Cartier $\R$-divisor. Appealing to the projectivity of $X$, fix an ample class $[A]$. Then by Proposition \ref{lem:04}(\ref{enum:lem:04:d}), as $n\to\infty$,
\begin{align*}
  \frac{\lambda^{-n}}{2q([A],[\eta_-])}({f}^n)^\ast[A]&\to\binom{2n}{n}^{1/n}[\eta_+]; \\
  \frac{\lambda^{-n}}{2q([A],[\eta_+])}({f}^{-n})^\ast[A]&\to\binom{2n}{n}^{1/n}[\eta_-].
\end{align*}

Therefore $\alpha=[\eta_+]+[\eta_-]$ is also the class of a Cartier $\R$-divisor, which is big and nef. Note that $\alpha$ is not ample; otherwise, its null locus $E=\varnothing$ would be empty, and by Proposition \ref{lem:20} we have $X$ a compact flat manifold. The only such manifolds are tori \cite{BieberbachI}\cite{BieberbachII}, which contradicts to that $X$ is simply connected.

The first step of proving Proposition \ref{lem:contraction-construct} is to construct the contraction $\phi$. This is essentially done by \cite[Theorem A]{BCL14}, but generalized to $\R$-Cartier classes. We present it in the following
\begin{lemma}
  \label{lem:22}
  Let $\alpha$ be the $(1,1)$-class of a big and nef Cartier $\R$-divisor, and $E$ be its null locus \cite[p.1168]{CT15}. Then one can construct a contraction $\phi\colon X\to Y$ so that $X\setminus E$ is the maximal Zariski open subset in which $\phi$ maps it isomorphically onto its image. (That is, $E=\mathrm{Exc}(\phi)$.)
\end{lemma}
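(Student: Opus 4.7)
The plan is to reduce the statement to the already-established $\Q$-Cartier case, namely \cite[Theorem A]{BCL14}, by passing to a rational class in the same ``stratum'' of the big-and-nef cone as $\alpha$. Since the lemma's proof is morally a direct invocation of \cite{BCL14} together with a rational-approximation maneuver, the work is concentrated in engineering the approximation.

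First I would produce a $\Q$-Cartier big and nef class $\alpha'$ with $\mathrm{Null}(\alpha') = \mathrm{Null}(\alpha) = E$. Since $X$ is projective, Nakamaye-type arguments, made precise for transcendental classes by Collins--Tosatti \cite{CT15}, identify $E = \mathrm{Null}(\alpha)$ with the augmented base locus $\mathbb{B}_+(\alpha)$. By Noetherianity $E$ decomposes into finitely many irreducible components $V_1, \ldots, V_s$, and each $V_i$, being a maximal irreducible subvariety of $E$, satisfies $\alpha^{\dim V_i} \cdot V_i = 0$. These vanishings are homogeneous polynomial conditions on $N^1(X)_\R$ with rational coefficients, so the density of $\Q$-classes in the real algebraic locus they cut out, combined with openness of the big-and-nef cone, yields a $\Q$-Cartier class $\alpha'$ arbitrarily close to $\alpha$ satisfying $(\alpha')^{\dim V_i}\cdot V_i = 0$ for each $i$. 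A further small rational ample perturbation, if needed, ensures that no new irreducible null-locus components appear in $\mathrm{Null}(\alpha')$, so that $\mathrm{Null}(\alpha') = E$.

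Once $\alpha'$ is in hand, \cite[Theorem A]{BCL14} applied to the $\Q$-Cartier big and nef class $\alpha'$ produces a birational morphism $\phi \colon X \to Y$ onto a normal projective variety with $\mathrm{Exc}(\phi) = \mathbb{B}_+(\alpha') = E$, and $\phi|_{X \setminus E}$ is an isomorphism onto its image. This $\phi$ is the contraction demanded by the lemma.

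The main obstacle is the first step: ensuring a rational approximation with \emph{exactly} the same null locus, neither strictly larger nor strictly smaller. The delicate point is that $\alpha$ may lie on a positive-codimension face of the nef cone, so locating rational points on that face while keeping the class big takes some care. An alternative that bypasses this approximation is to extend \cite[Theorem A]{BCL14} itself to $\R$-Cartier classes via the $\R$-version of the Basepoint-Free theorem; this is particularly clean for hyperk\"ahler $X$ since $K_X = 0$ makes $K_X + \alpha = \alpha$ nef and big, meeting Kawamata's hypotheses directly.
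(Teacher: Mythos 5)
Your main proposal has a genuine gap precisely where the paper does its real work. You assert that the conditions $(\alpha')^{\dim V_i}\cdot V_i = 0$ cut out a real algebraic set with rational coefficients, and that therefore $\Q$-points are dense in it near $\alpha$. This inference is false in general: a real algebraic variety defined over $\Q$ need not have dense rational points (or any at all near a given real point). The locus in which you want to find a rational class is, essentially, the minimal face of $\overline{\mathrm{Amp}}(X)\cap\mathrm{Big}(X)$ containing $\alpha$, and the assertion that this face is a rational polytope (hence has dense $\Q$-points) is \emph{exactly} Kawamata's Rational Polyhedral Theorem \cite[Theorem 5.7]{Kaw88}, which you never invoke. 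It is not a free fact about algebraic sets; it is a deep structural theorem about nef cones of varieties with $K_X\equiv 0$, and it is the load-bearing ingredient in the paper's proof. Without it, the existence of a nearby rational big-and-nef class on the same face is unjustified.

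A second, smaller problem: your remark that ``a further small rational ample perturbation \ldots\ ensures that no new irreducible null-locus components appear'' works against you. Adding $\epsilon A$ with $A$ ample and $\epsilon>0$ makes $(\alpha'+\epsilon A)^{\dim V_i}\cdot V_i>0$ for every $V_i$, destroying the vanishings you just arranged. To control $\mathrm{Null}(\alpha')$ from above you instead need to keep $\alpha'$ in the relative interior of the same face as $\alpha$, and then argue that two big-and-nef classes in the relative interior of the same face give the same semiample contraction, hence the same null locus. This is what the paper does, by writing $\alpha=\sum a_i c_1(L_i)$ with $a_i\in\Q_{>0}$, $L_i$ on the face, invoking basepoint-freeness to get morphisms $\Phi_{mL_i}$ with isomorphic images, and then comparing augmented base loci via the multinomial-positivity argument. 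Your closing remark about extending \cite[Theorem A]{BCL14} to $\R$-classes using $K_X=0$ gestures in the right direction, but as stated it is an alternative you do not carry out, and it is in fact the skeleton of the paper's actual argument rather than a bypass of it.
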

\begin{proof}
Denote $\mathrm{Amp}(X)$ and $\mathrm{Big}(X)$ for the cone of $(1,1)$-classes of ample and big Cartier $\R$-divisors, respectively. By Kawamata's Rational Polyhedral Theorem \cite[Theorem 5.7]{Kaw88}, the face $F$ of the cone $\mathrm{Big}(X)\cap\overline{\mathrm{Amp}}(X)$ in which $\alpha$ lies on is represented by a rational linear equation. Consequently, one can write $\alpha=\sum_{\mathrm{finite}}a_i c_1(L_i)$ where each $a_i>0$ and each $L_i$ is a big and nef line bundle in which $c_1(L_i)\in F$.

Because each $L_i$ is big and nef, by basepoint-free theorems \cite[Theorem 3.9.1]{BCHM}\cite[Theorem 1.3]{Kaw88}, it is semiample. Because all $L_i$'s lie on the same face of the big and nef cone $\mathrm{Big}(X)\cap\overline{\mathrm{Amp}}(X)$, the images of the morphism
\[\Phi_{mL_i}\colon X\to\Pp H^0(X,mL_i)\]
are isomorphic to each other, when $m\gg 0$ (cf. \cite[Definition 3-2-3]{KMM}).

For each $L_i$, denote its augmented base loci by $E_i$, denote the image of $\Phi_{mL_i}$ by $Y_i$, and let $\phi_i:=\Phi_{mL_i}|X\setminus E_i$ be a restriction.

We claim that $E_i=E_j$. Fix an isomorphism $\psi\colon Y_i\to Y_j$ in which $\psi\circ\phi_i=\phi_j$. By \cite[Theorem A]{BCL14}, the complement $X\setminus E_i$ of the locus is characterized by the maximal Zariski open subset in which $\Phi_{mL_i}$ isomorphically sends the subset into its image. Postcomposing $\psi$ to $\Phi_{mL_i}$, we see that $X\setminus E_i$ is sent isomorphically into its image via $\Phi_{mL_j}$. Consequently, we have $X\setminus E_i\subset X\setminus E_j$. Arguing symmetrically, we have the claim.

Fix a bundle $L_i$ and denote $\phi:=\phi_i$. An upshot of the above paragraph is that $\phi$ is a contraction in which $X\setminus E_i$ is the maximal Zariski open subset where $\phi$ maps it isomorphically onto its image.

We claim that $E=E_i$. Let $L=\sum L_i$. Denote $E'$ for the augmented base locus of $L$. Then we have (i) $E'=E_i$, by the same token of showing $E_i=E_j$, and (ii) $E'\subset E\subset E_i$, by the followings. (By \cite[Corollary 1.2]{CT15}, it suffices to compare the null loci.)
\begin{description}
  \item[($E\subset E_i$)] Fix any subvariety $V\subset X$ in which $\int_V\alpha^{\dim V}=0$. By $\alpha=\sum a_i c_1(L_i)$ and because the multinomial theorem gives nonnegative terms, we obtain $\int_V c_1(L_i)^{\dim V}=0$. Thus $V\subset E_i$, and $E\subset E_i$ follows.
  \item[($E'\subset E$)] For any subvariety $V\subset X$ in which $\int_V\left(\sum c_1(L_i)\right)^{\dim V}=0$, expand it with the multinomial theorem to have $\int_V \prod c_1(L_i)^{e_i}=0$, whenever $\sum e_i=\dim V$. As $\alpha=\sum a_ic_1(L_i)$, again by multinomial theorem, we have $\int_V\alpha^{\dim V}=0$. This shows $E'\subset E$.
\end{description}
Combining (i) and (ii) we have $E=E_i$, as required.
\end{proof}

Define the \emph{exceptional set} $\mathrm{Exc}(\phi)$ as the minimal Zariski closed subset $E'\subset X$ in which $X\setminus E'$ is mapped isomorphically onto its image by $\phi$. By what is stated in Lemma \ref{lem:22}, that $E=\mathrm{Exc}(\phi)$ paraphrases the statement. This set is, by the inverse function theorem, same as the set of $x\in X$ in which $D_x\phi$ is invertible.

\begin{proof}[Proof of Proposition \ref{lem:contraction-construct}]
Construct the contraction map $\phi\colon X\to Y$ by Lemma \ref{lem:22}, for $\alpha=[\eta_+]+[\eta_-]$. Then $Y_{\mathrm{reg}}=\phi(X\setminus\mathrm{Exc}(\phi))=\phi(X\setminus E)$ follows.

Recall that there is a flat metric $\omega_0$ on $X\setminus E$ (Proposition \ref{lem:20}). Pushforwarding this to $Y_{\mathrm{reg}}$, we have a K\"ahler current $\phi_\ast\omega_0$ which is also a flat metric on $Y_{\mathrm{reg}}$.

To see why $Y$ has canonical singularities, we use the remark in \cite[Remark 1]{wierzba}. As $\phi$ is a symplectic resolution, $\phi$ is crepant, i.e., $\phi^\ast K_Y=K_X$.
\end{proof}

\subsection{Proof of Theorem \ref{lem:00}}
\label{sec:proof-of-thm}

What was claimed about $Y$ in Proposition \ref{lem:contraction-construct} additionally yields that the tangent bundle $T|Y_{\mathrm{reg}}$ is flat, hence the hypotheses of the Theorem \ref{lem:21} above are met.
Indeed, the metric $\phi_\ast\omega_0|Y_{\mathrm{reg}}$ produces a flat connection on $T|Y_{\mathrm{reg}}$. 
Therefore $Y$ is a torus quotient; that is,
there exists a complex torus $\mathbb{T}=\C^{2n}/\Lambda$ and a finite group
of toral isomorphisms $\Gamma$
in which $Y=\mathbb{T}/\Gamma$.

To show that ${f}$ is induced from a hyperbolic linear transform, 
recall that $\phi$ isomorphically sends 
$X\setminus E$ to $Y_{\mathrm{reg}}$.
Conjugating ${f}|_{X\setminus E}$ via $\phi$, we then have a map 
$\widetilde{f}\colon Y_{\mathrm{reg}}\to Y$. 
This $\widetilde{f}$ lifts to a rational map 
$\mathbb{T}\dashrightarrow\mathbb{T}$, defined in codimension 1. 
The only such map is affine-linear \cite[Lemma 1.25]{LoB17}, 
and this descends down to a morphism $\widetilde{f}\colon Y\to Y$. 
This verifies the desired classification of ${f}$, 
and finishes the proof of Theorem \ref{lem:00}.

\bibliography{references}{}
\bibliographystyle{amsalpha}

\end{document}